\let\euscr\mathscr \let\mathscr\relax
\newtheorem{theorem}{Theorem}[section]
\newtheorem{proposition}[theorem]{Proposition}
\newtheorem{lemma}[theorem]{Lemma}
\newtheorem{corollary}[theorem]{Corollary}
\theoremstyle{definition}
\newtheorem{definition}[theorem]{Definition}
\theoremstyle{remark}
\newtheorem{remark}[theorem]{Remark}
\renewcommand{\wp}{\euscr{P}}
\newcommand{\bydef}{\mathrel{\mathop:}=}
\newcommand{\ax}{\operatorname{Ax}}
\newcommand{\id}{\operatorname{id}}
\newcommand{\rMod}{\mathcal{M}\!\!\:\mathit{od}\textrm{-}}
\newcommand{\biMod}{\textrm{-}\mathcal{M}\!\!\:\mathit{od}\textrm{-}}
\newcommand{\Mod}{\textrm{-}\mathcal{M}\!\!\:\mathit{od}}
\newcommand{\lang}{\mathcal{L}}
\newcommand{\fml}{\mathit{Fm}_\mathcal{L}}
\newcommand{\Fml}{\wp\fml}
\newcommand{\fmll}{\mathit{Fm}_{\mathcal{L}_1}}
\newcommand{\Fmll}{\wp\fmll}
\newcommand{\fmlll}{\mathit{Fm}_{\mathcal{L}_2}}
\newcommand{\fmlu}{\mathit{Fm}_{\mathcal{L}_1 \cup \mathcal{L}_2}}
\newcommand{\sfm}{\Sigma_{\cat L}}
\newcommand{\Sfm}{\wp\sfm}
\newcommand{\sfmm}{\Sigma_{\cat{L}_1}}
\newcommand{\Sfmm}{\wp\sfmm}
\newcommand{\sfmmm}{\Sigma_{\cat{L}_2}}
\newcommand{\Sfmmm}{\wp\sfmmm}
\newcommand{\sfmu}{\Sigma_{\cat{L}_1 \cup \cat{L}_2}}
\newcommand{\Sfmu}{\wp\sfmu}
\newcommand{\SL}{\mathcal{SL}}
\newcommand{\Q}{\mathcal{Q}}
\newcommand{\cat}{\mathcal}
\newcommand{\Eq}{\mathit{Eq}}
\newcommand{\seq}{\mathit{Seq}}
\renewcommand{\th}{\mathit{Th}}
\newcommand{\Var}{\operatorname{Var}}
\renewcommand{\phi}{\varphi}
\renewcommand{\theta}{\vartheta}
\newcommand{\e}{\varepsilon}
\newcommand{\g}{\gamma}
\renewcommand{\d}{\delta}
\newcommand{\restr}{\upharpoonright}
\newcommand{\under}{\backslash}
\newcommand{\lto}{\longrightarrow}
\newcommand{\To}{\Rightarrow}
\newcommand{\lmapsto}{\longmapsto}
\newcommand{\ov}{\overline}
\newcommand{\tensor}{\otimes}
\def\amslatex\slash{{\protect\AmS-\protect\LaTeX}}
\begin{document} 

\title{Coproduct and amalgamation of deductive systems by means of ordered algebras}

\author{Ciro Russo} 

\institution{Departamento de Matemática \\ Universidade Federal da Bahia, Brazil \\ \small{\texttt{ciro.russo@ufba.br}}}


\maketitle
\today

\begin{abstract}
We propose various methods for combining or amalgamating propositional languages and deductive systems. We make heavy use of quantales and quantale modules in the wake of previous works by the present and other authors. We also describe quite extensively the relationships among the algebraic and order-theoretic constructions and the corresponding ones based on a purely logical approach.

\vspace{0.3cm}
{\bf Acknowledgements.} This paper has been awarded the \textbf{2021 Newton da Costa Prize for Logic} and will be presented at the \textbf{2nd World Logic Prizes Contest} within the \textbf{UNILOG 2022} conference, in Crete.

This work was supported by the individual travel grant \emph{Professor Visitante no Exterior S\^enior} - \textbf{Grant No. 88887.477515/2020-00}, awar\-ded by the Coordenadoria de Aperfei\c coamento de Pessoal de N\'ivel Superior and the Universidade Federal da Bahia through the CAPES-PrInt UFBA.
\end{abstract}

\section*{Introduction}

The relationships among different logics and languages constitute a rather important and interesting topic in various areas of both pure and applied Mathematical Logic, such as Abstract Algebraic Logic, Proof Theory, and Automated Deduction. A recent successful approach to such a topic involves the use of ordered algebraic structures and residuation theory \cite{blja,wadi} more as metalogical tools rather than algebraic semantics as it is common in the literature.

The approach to deductive systems by means of order theory has a quite long history, which traces back to Tarski \cite{tar} and goes through a large part of the twentieth century and the last decades -- see, for instance, \cite{bljo,cze,woj}. However, the complementary role of Algebra seems to have been fully understood only recently. Indeed, the representation of consequence relations by means of closure operators seemed to have reached a cul-de-sac due to its inability to manage the syntax of a deductive system. On the contrary, adding an algebraic structure to the abstract representation of the domains of deductive systems and to their lattice of theories, which is in our opinion the main achievement of Galatos and Tsinakis' paper \cite{galtsi}, made the abstract approch flexible enough to fully handle any propositional logic.

Quantales were introduced by Mulvey \cite{mulvey}, suggested by observations related to non-commutative C$^*$-algebras, constructive foundations for
quantum mechanics, and non-commutative logics. Later on, quantales and quantale modules were used in the study of algebraic and logical foundations of quantum mechanics indeed \cite{abrvic,moore,resende1}, but eventually they proved to be rather useful also in other areas of pure and applied mathematics, such as non-commutative topology \cite{conmir,berni,borc1,borc2}, Linear Logic \cite{yetter}, and data compression (see, e.g., \cite{dnr,russojlc}, among others).

However, despite of their multiple applications, only quantales have been systematically studied since their inception \cite{rosenthal}, while the first methodical widenings on quantale modules are rather recent \cite{russothesis,russojlc,russoapal,solo}. 

In the aforementioned paper \cite{galtsi} by Galatos and Tsinakis, the authors proposed an enriched perspective on the theory of consequence operators. Closure operators on a powerset lattice are able to describe the deductive part of a logic, while its structural one is left aside. As a matter of fact, a bare closure operator is not able to tell anything about either the language of a deductive system or its type, i.~e., whether it is a Hilbert-style, equational, or Gentzen-style system. Once we look at the lattice of theories of a logic as a quantale module, rather than a mere complete lattice, we are able to capture both the deductive and syntactic parts of the deductive system at hand.

Galatos and Tsinakis' work was deepened to some extent by the present author in \cite{russothesis, russoapal}, but the development of the categorical and algebraic machinery ended up by outdoing the applications to logic. This paper's main purpose is therefore to show how to concretely apply to logical systems the order-theoretic framework initiated in \cite{galtsi}, developed mainly on its algebraic side by the present author, and occasionally mentioned in other works with reference to logic \cite{cinmor,galgil,mor,raf,fontbook}.

In particular, we are going to face several quite typical situations such as language expansion, combination of logics (about which the reader may find a different approach in \cite{sern1,sern2}), and amalgamation. In all of such cases, we shall propose different constructions making use of the theory of quantale modules as either an exclusive or an auxiliary tool, eventually comparing the results of a pure quantale-theoretic approach with the more flexible mixture of abstract logical means with order-theoretic and algebraic ones.

In the first section, we shall recall the main algebraic, categorical, and order-theoretic tools, and we will also add some new results regarding the representation of congruences of quantale modules. In Section \ref{quantlog} we shall briefly revise the main ingredients of the quantale-theoretic approach to deductive systems.

Sections from \ref{tensor} to \ref{logicamalg} contain our main results. In Section \ref{tensor} we will show how to apply the tensor product of quantale modules in order to expand a language and, above all, we will prove that the lattice of theories of the initial system remains untouched by this procedure, in the sense that such a (sup-)lattice of theories embeds in the one of the expanded system.

In Section \ref{amalgsec} we shall discuss the amalgamation property for languages and deductive systems. It is known that the amalgamation property does not hold for quantales while holds for quantale modules; nonetheless, we prove that the quantales of substitutions of propositional languages do enjoy the strong amalgamation property at least in the case of languages with a common fragment. On the deductive side, we show how to concretely carry out an amalgamated coproduct of modules of theories via the standard procedure for quantale modules. A different -- and possibly more interesting for logicians -- construction will be presented in Section \ref{logicamalg}.

Section \ref{coproddssame} contains a construction from ground up of coproducts of deductive systems. The only assumptions on the given logics is that they have to be non-trivial and of the same type, namely, both on formulas, equations, or sequents closed under the same types. Despite of not being quantale-based, the construction is made possible by a heavy use of the results recalled and those proved in Section \ref{ordtool}. In particular, we will show that the module of theories of the new system contains isomorphic copies of those of the initial ones and of their algebraic coproduct.

In Section \ref{logicamalg} we shall refine the results of Section \ref{coproddssame} to the case of amalgamation, i.~e.,  by adding another deductive system which is representable in the two given ones. Again, the logical construction works very well and the amalgamating object we found is purely logical in nature, with isomorphic copies of all the systems involved, plus their algebraic amalgamated coproduct, herein embedded. On the other hand, both the coproduct of the previous section and the amalgamated coproduct of Section \ref{logicamalg} are built on pretty large languages (the disjoint union of the initial ones), which makes the machinery flexible enough to handle outermost concrete situations with a small additional effort.

Throughout the paper, due to the quantity and quality of notations, we shall use several simplifications, such as omitting parentheses in powersets ($\wp(X)$ will be denoted by $\wp X$) and the symbol ``$\circ$'' in map compositions whenever convenient. Further abbreviations or abuse of notations will be pointed out when needed.

\section{Known and novel order-theoretic tools}
\label{ordtool}

In this section we shall briefly recall definitions and results on the ordered algebraic structures directly involved in our main results. For any further information on the topics, we refer the reader to \cite{krpa,mulvey,rosenthal} for what concerns quantales, and to \cite{paseka,mulnaw,russothesis,russojlc,russoapal,russosajl,solo} for quantale modules.

\subsubsection*{Basics}

The category $\SL$ of \emph{sup-lattices} has complete lattices as objects and maps preserving arbitrary joins -- or, which amounts to the same when the orders are complete, residuated maps -- as morphisms. The bottom element of a sup-lattice shall be denoted by $\bot$ and the top element by $\top$. We recall that any sup-lattice morphism obviously preserve the bottom element while it does not need to preserve the top. 

Quantales are often defined as sup-lattices in the category of semigroups. Since we shall only deal with unital quatales, we can say that $(Q, \bigvee, \cdot, 1)$ is a \emph{quantale} if $(Q,\bigvee)$ is a sup-lattice, $(Q, \cdot, 1)$ is a monoid, and the product is biresiduated, i.~e., for all $a,b \in Q$,
$$\exists b\under a = \max\{c \in Q \mid bc \leq a\} \text{ and } \exists a/b = \max\{c \in Q \mid cb \leq a\}.$$
The above condition is equivalent to the distributivity of $\cdot$ w.r.t. any join:
$$\forall a \in Q \ \forall B \subseteq Q \ \left(a \cdot \bigvee B = \bigvee\limits_{b \in B} (a \cdot b) \text{ and } \left(\bigvee B\right) \cdot a = \bigvee\limits_{b \in B} (b \cdot a)\right).$$
A quantale is \emph{commutative} if so is the multiplication and \emph{integral} if $1 = \top$.

The morphisms in the category $\cat Q$ of quantales are maps that are simultaneously sup-lattice and monoid homomorphisms or, that is the same, residuated monoid homomorphisms. 

The ring-like countenance of quantales obviously suggests a natural definition of module. Given a quantale $Q$, a \emph{left module over $Q$} (or, simply, {\em left $Q$-module}) is a sup-lattice $(M, \bigvee)$ acted on by $Q$ via a \emph{scalar multiplication} $\cdot: (a,u) \in Q \times M \mapsto a \cdot u \in M$ such that  
\begin{itemize}
\item $(ab) \cdot u = a \cdot (b \cdot u)$, for all $a, b \in Q$ and $u \in M$;
\item the scalar multiplication distributes over arbitrary joins in both arguments or, equivalently, is biresiduated;
\item $1 \cdot u = u$, for all $u \in M$.\footnote{Using a different symbol for this action would make the notations much heavier without helping the reading, so we rather preferred to use the same symbol of the product in the quantale, relying on the context and different sets of letters for scalars and ``vectors'' for the meaning of each of its occurrences. Whenever convenient, we shall also drop it.}
\end{itemize}
Right modules are defined analogously, mutatis mutandis. Moreover, if $R$ is another quantale, a sup-lattice $M$ is a $Q$-$R$-bimodule if it is a left $Q$-module, a right $R$-module, and in addition $(a \cdot_Q u) \cdot_R a' = a \cdot_Q (u \cdot_R a')$ for all $a \in Q$, $a' \in R$, and $u \in M$.

We also recall that, as for the quantale product, the biresiduation of $\cdot$ induces two more maps:
\begin{itemize}
\item[] $\under: (a,u) \in Q \times M \mapsto a\under u = \max\{v \in M \mid av \leq u\} \in M$, and
\item[] $/: (u,v) \in M \times M \mapsto u/v = \max\{a \in Q \mid av \leq u\} \in Q.$
\end{itemize}
We shall normally refer to ``modules'' and use the left module notation whenever a definition or a result can be stated both for left and right modules.

Given two $Q$-modules $M$ and $N$, a map $f: M \to N$ is a $Q$-module homomorphism if it is a sup-lattice homomorphism which preserves the scalar multiplication, namely, an action-preserving residuated map. For any quantale $Q$ we shall denote by $Q\Mod$ and $\rMod Q$ respectively the categories of left $Q$-modules and right $Q$-modules with the corresponding homomorphisms. Moreover, if $R$ is another quantale $Q\biMod R$ shall denote the category whose objects are $Q$-$R$-bimodules and morphisms are maps which are simultaneously left $Q$-module and right $R$-module morphisms.

\subsubsection*{Representing morphisms and congruences}

Quantale and quantale module morphisms are intimately connected with the so-called \emph{quantic nuclei} and \emph{$\Q$-module nuclei} (the latter also called \emph{structural closure operators} in \cite{galtsi} and \cite{russoapal}). A quantic nucleus over a quantale $Q$ is a closure operator $j: Q \to Q$, i.~e., a monotone, extensive, idempotent operator such that, for all $a, b \in Q$, $j(a)j(b) \leq j(ab)$. A nucleus over a $Q$-module $M$ is a closure operator $\g: M \to M$ such that, for all $a \in Q$ and $u \in M$, $a\g(u) \leq \g(au)$.

The images $Q_j$ and $M_\g$ of both quantic and $\Q$-module nuclei are closure systems of their respective domains, and therefore are closed under arbitrary meets. Moreover, $(Q_j, {}^j\bigvee, \cdot_j, j(1))$ is a quantale, with $a \cdot_j b \bydef j(ab)$ and ${}^j\bigvee A \bydef j(\bigvee A)$ ($\{a,b\} \cup A \subseteq Q_j$), and $(M_\g, {}^\g\bigvee)$ is a $Q$-module with the join defined as for $Q_j$ and the scalar multiplication $a \cdot_\g u \bydef \g(au)$ ($a \in Q$, $u \in M_\g$). By restricting the codomains of $j$ and $\g$ to their respective images, we obtain onto homomorphisms, hence $Q_j$ is homomorphic image of $Q$ and $M_\g$ is homomorphic image of $M$.

Furthermore, if $h: Q \to R$ is a quantale homomorphism with residual map $h_*: R \to Q$, then $j = h_* \circ h$ is a quantic nucleus on $Q$ and $Q_j \cong h[Q]$. Analogously, if $f: M \to N$ is a $Q$-module morphism with residuum $f_*$, then $\g = f_* \circ f$ is a $Q$-module nucleus on $M$ and  $M_\g \cong f[M]$.

It is worth recalling also that the set of quantic nuclei on a quantale $Q$ is a complete lattice whose meet is defined pointwise \cite[Proposition 3.1.3]{rosenthal}. The same holds for module nuclei, and the proof is a trivial adaptation of the one for quantic nuclei. The lattices of quantic nuclei on a quantale $Q$ shall be denoted by $\cat N(Q)$ and the one of $Q$-module nuclei on a $Q$-module $M$ by $\cat{N}_Q(M)$. As a consequence of the one-to-one correspondence between nuclei and congruences, each of them is isomorphic to the lattice of congruences of the respective structure.

Then, besides the universal algebraic corresponce between homomorphisms and congruences given by the isomorphism theorems, quantale and $\Q$-module congruences and homomorphisms can also be described by means of nuclei. But there is one more useful tool for dealing with congruences in such structures: the so-called saturated elements. 

For quantales, we hereby recall the pertinent definition and result from \cite{russosajl}.
\begin{definition}\label{satel}
Let $Q$ be a (not necessarily unital) quantale, and $\vartheta \subseteq Q^2$ be a binary relation on $Q$. An element $s$ of $Q$ is called \emph{$\vartheta$-saturated} if, for all $(a,b) \in \vartheta$ and $c,d \in Q$, the following conditions hold:
\begin{enumerate}[(i)]
\item $cad \leq s$ iff $cbd \leq s$;
\item $ac \leq s$ iff $bc \leq s$;
\item $ca \leq s$ iff $cb \leq s$;
\item $a \leq s$ iff $b \leq s$.
\end{enumerate}
We shall denote by $Q_\vartheta$ the set of $\vartheta$-saturated elements of $Q$.
\end{definition}

\begin{remark}\label{satuni}
If $Q$ is unital, conditions (ii--iv) of Definition \ref{satel} are redundant, since they are all immediate consequences of (i).
\end{remark}

\begin{theorem}\label{satquo}
Let $Q$ be a quantale, $\vartheta \subseteq Q^2$, and
$$\rho_\vartheta: a \in Q \mapsto \bigwedge\{s \in Q_\vartheta \mid a \leq s\} \in Q.$$
Then $\rho_\vartheta$ is a quantic nucleus whose image is $Q_\vartheta$. Moreover, $Q_\vartheta$, with the structure induced by $\rho_\vartheta$, is isomorphic to the quotient of $Q$ w.r.t. the congruence generated by $\vartheta$.
\end{theorem}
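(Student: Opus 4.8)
The plan is to verify the three assertions in order --- that $\rho_\vartheta$ is a closure operator with image $Q_\vartheta$, that it is in fact a quantic nucleus, and that the congruence it induces is the one generated by $\vartheta$ --- and throughout I would invoke Remark~\ref{satuni} to work only with condition~(i) of Definition~\ref{satel}, $Q$ being unital. First I would observe that $Q_\vartheta$ is a closure system: it contains $\top$, and for $\{s_k\}_k \subseteq Q_\vartheta$, $(a,b)\in\vartheta$, $c,d\in Q$, the chain $cad \leq \bigwedge_k s_k \Leftrightarrow (\forall k)\, cad \leq s_k \Leftrightarrow (\forall k)\, cbd \leq s_k \Leftrightarrow cbd \leq \bigwedge_k s_k$ shows closure under arbitrary meets. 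Hence $\rho_\vartheta$ is the closure operator canonically attached to this closure system --- monotone, extensive, idempotent --- with image exactly $Q_\vartheta$, which settles the first claim.

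For the nucleus property I must establish $\rho_\vartheta(a)\rho_\vartheta(b) \leq \rho_\vartheta(ab)$; since $\rho_\vartheta(ab) = \bigwedge\{s \in Q_\vartheta \mid ab \leq s\}$, it suffices to fix such an $s$ and show $\rho_\vartheta(a)\rho_\vartheta(b) \leq s$. The crucial point I would use is that $Q_\vartheta$ is stable under the pertinent residuals: for any $b\in Q$ one has $s/b \in Q_\vartheta$, since for $(p,q)\in\vartheta$ saturation of $s$ with right-hand factor $db$ gives $cpd \leq s/b \Leftrightarrow cp(db)\leq s \Leftrightarrow cq(db)\leq s \Leftrightarrow cqd \leq s/b$; symmetrically, for any $a'\in Q$ one has $a'\under s \in Q_\vartheta$, using saturation with left-hand factor $a'c$. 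Then $ab \leq s$ gives $a \leq s/b$, hence $\rho_\vartheta(a) \leq s/b$ as $s/b \in Q_\vartheta$, i.e.\ $\rho_\vartheta(a)\,b \leq s$; this gives $b \leq \rho_\vartheta(a)\under s$, hence $\rho_\vartheta(b) \leq \rho_\vartheta(a)\under s$ as $\rho_\vartheta(a)\under s \in Q_\vartheta$, i.e.\ $\rho_\vartheta(a)\rho_\vartheta(b) \leq s$, as wanted. With $\rho_\vartheta$ now a quantic nucleus, the material recalled just before the statement shows its corestriction $\bar\rho_\vartheta\colon Q \to Q_\vartheta$ is an onto quantale homomorphism; write $\Theta_\vartheta \bydef \ker\bar\rho_\vartheta$ for the associated congruence.

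It remains to identify $\Theta_\vartheta$ with the congruence $\Theta$ generated by $\vartheta$. One inclusion is immediate: taking $c=d=1$ in condition~(i) shows $a \leq s \Leftrightarrow b \leq s$ for all $s\in Q_\vartheta$ whenever $(a,b)\in\vartheta$, hence $\rho_\vartheta(a)=\rho_\vartheta(b)$, i.e.\ $\vartheta \subseteq \Theta_\vartheta$, so $\Theta \subseteq \Theta_\vartheta$. For the converse I would prove the stronger statement that $\Theta_\vartheta \subseteq \Phi$ for \emph{every} congruence $\Phi$ with $\vartheta \subseteq \Phi$: letting $j = \pi_*\circ\pi$ be the quantic nucleus attached to $\pi\colon Q\to Q/\Phi$ (so $\ker j = \Phi$), every fixed point $s$ of $j$ is $\vartheta$-saturated, because for $(a,b)\in\vartheta$ we have $j(a)=j(b)$ and hence, by the standard identities $j(x\,y\,z)=j(x\,j(y)\,z)$, $j(cad)=j(c\,j(a)\,d)=j(c\,j(b)\,d)=j(cbd)$, so that $cad \leq s \Leftrightarrow j(cad)\leq s \Leftrightarrow j(cbd)\leq s \Leftrightarrow cbd\leq s$. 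Thus the closure system $Q_j$ is contained in $Q_\vartheta$, which forces $\rho_\vartheta \leq j$ pointwise (a smaller closure system gives a pointwise-larger closure operator), and since $\rho_\vartheta\leq j$ entails $j\circ\rho_\vartheta = j$ we obtain $\ker\rho_\vartheta \subseteq \ker j = \Phi$. Taking $\Phi=\Theta$ yields $\Theta_\vartheta = \Theta$, whereupon the isomorphism theorem applied to $\bar\rho_\vartheta$ gives $Q_\vartheta \cong Q/\Theta$.

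The step I expect to be the real obstacle is the nucleus property: the inequality $\rho_\vartheta(a)\rho_\vartheta(b)\leq\rho_\vartheta(ab)$ is not forced by the closure-system structure, and the whole point is to recognise that it is encoded in the closure of $Q_\vartheta$ under the residuals $(-)/b$ and $a'\under(-)$ --- which is exactly what the saturation condition provides once one slides the context $(c,d)$ through the product. The final identification of $\Theta_\vartheta$ with $\Theta$ is then essentially bookkeeping with the nucleus--congruence correspondence recalled in this section, together with the elementary order-theoretic facts about closure operators used above.
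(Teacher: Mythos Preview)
Your proof is correct. The paper does not itself prove Theorem~\ref{satquo} (it is recalled from \cite{russosajl}), but your argument mirrors exactly the route the paper takes for the module analogue, Theorem~\ref{satquom}: closure of the saturated elements under meets and residuals (cf.\ Proposition~\ref{satinfresm}), the nucleus inequality deduced from that residual-stability, and the identification of the induced congruence with the one generated by $\vartheta$ via comparison of closure systems (your inline arguments parallel Lemmas~\ref{RR'm} and~\ref{satnucm}).
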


We shall now extend the above result to the case of quantale modules, which has not been considered in the literature so far to the best of our knowledge.
\begin{definition}\label{satelm}
Let $Q$ be a quantale, $M$ a $Q$-module, and $\vartheta$ be a binary relation on $M$. An element $s$ of $M$ is called \emph{$\vartheta$-saturated} if, for all $(v,w) \in \vartheta$ and $a \in Q$, the following condition hold:
\begin{equation}\label{sateq}
av \leq s \iff aw \leq s.
\end{equation}
We shall denote by $M_\vartheta$ the set of $\vartheta$-saturated elements of $M$.
\end{definition}

\begin{proposition}\label{satinfresm}
For any $Q$-module $M$,  and for all binary relation $\vartheta$ on it, $M_\vartheta$ is closed w.r.t. arbitrary meets. Moreover, for all $s \in M_\vartheta$ and for all $a \in Q$, $a \under s$ belong to $M_\vartheta$.
\end{proposition}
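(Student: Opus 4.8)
The plan is to verify both assertions directly from Definition \ref{satelm}, using only the defining adjunctions of a $Q$-module (associativity of the scalar action and the residuation $a \under (-)$ of left multiplication by $a$).

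For the closure under arbitrary meets, I would take an arbitrary family $\{s_i\}_{i \in I} \subseteq M_\vartheta$ and put $s \bydef \bigwedge_{i \in I} s_i$, with the usual convention that the empty meet is $\top$, which is vacuously $\vartheta$-saturated since $av \leq \top$ and $aw \leq \top$ hold for all $a, v, w$. Fixing $(v,w) \in \vartheta$ and $a \in Q$, the inequality $av \leq s$ holds iff $av \leq s_i$ for every $i \in I$; since each $s_i$ is $\vartheta$-saturated, this is equivalent to $aw \leq s_i$ for every $i \in I$, i.~e.\ to $aw \leq s$. Hence $s$ satisfies \eqref{sateq} for every $(v,w) \in \vartheta$ and $a \in Q$, so $s \in M_\vartheta$.

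For the second claim, recall that $a \under s = \max\{v \in M \mid av \leq s\}$, so that the adjunction $x \leq a \under s \iff ax \leq s$ holds for every $x \in M$. Now fix $(v,w) \in \vartheta$ and $b \in Q$. Combining this adjunction with the associativity $a(bv) = (ab)v$ of the scalar action, we obtain
$$bv \leq a \under s \iff a(bv) \leq s \iff (ab)v \leq s.$$
Since $s \in M_\vartheta$ and $ab \in Q$, the last inequality is equivalent, by \eqref{sateq} applied with the scalar $ab$, to $(ab)w \leq s$; running the same chain backwards with $w$ in place of $v$ gives $(ab)w \leq s \iff bw \leq a \under s$. Therefore $bv \leq a \under s \iff bw \leq a \under s$ for all $(v,w) \in \vartheta$ and $b \in Q$, that is, $a \under s \in M_\vartheta$.

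Neither step is genuinely difficult; the only point requiring a moment's care is the second one, where one must interleave the residuation adjunction with the associativity of the action in order to reduce the $\vartheta$-saturation of $a \under s$ (tested against a scalar $b$) to the $\vartheta$-saturation of $s$ tested against the scalar $ab$. Once this bookkeeping is in place, the argument is a short equivalence chain. (Note, incidentally, that this is the exact module-theoretic analogue of the fact that $Q_\vartheta$ is closed under meets and under left and right residuals, which underlies Theorem \ref{satquo}.)
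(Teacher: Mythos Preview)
Your proof is correct and follows essentially the same approach as the paper: both parts are handled by the same short equivalence chains, reducing the meet to a universal statement over the family and reducing the saturation of $a \under s$ at scalar $b$ to the saturation of $s$ at scalar $ab$ via the residuation adjunction. The only differences are cosmetic (you treat the empty meet explicitly and spell out the associativity step), so there is nothing to add.
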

\begin{proof}
Let $X \subseteq M_\vartheta$, $(v,w) \in \vartheta$, and $a \in Q$. We have
$$av \leq \bigwedge X \iff \forall s \in X(av \leq s) \iff \forall s \in X(aw \leq s) \iff aw \leq \bigwedge X.$$

Now let $a,b \in Q$, $(v,w) \in \vartheta$, and $s \in M_\theta$. Then
$$bv \leq a \under s \iff abv \leq s \iff abw \leq s \iff bw \leq a \under s,$$
so the assertion is proved.
\end{proof}

\begin{lemma}\label{RR'm}
If $\vartheta \subseteq \eta \subseteq M^2$, then $M_\eta \subseteq M_\vartheta$.
\end{lemma}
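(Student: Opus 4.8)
The plan is to unwind the definition of $\vartheta$-saturation (Definition \ref{satelm}) and observe that the condition defining $M_\vartheta$ is quantified over strictly fewer pairs than the one defining $M_\eta$. Concretely, I would fix an arbitrary $s \in M_\eta$ and show $s \in M_\vartheta$. By Definition \ref{satelm}, the hypothesis $s \in M_\eta$ says that for every $(v,w) \in \eta$ and every $a \in Q$ one has $av \leq s \iff aw \leq s$. Since $\vartheta \subseteq \eta$, every pair $(v,w) \in \vartheta$ is in particular a pair of $\eta$, so the biconditional $av \leq s \iff aw \leq s$ holds for all $(v,w) \in \vartheta$ and all $a \in Q$ as well; but this is precisely the defining condition \eqref{sateq} for $s$ to be $\vartheta$-saturated. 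Hence $s \in M_\vartheta$, and as $s$ was arbitrary, $M_\eta \subseteq M_\vartheta$.

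There is no genuine obstacle here: the statement is a purely formal antitonicity of the saturation operator with respect to inclusion of relations, and the proof is a one-line restriction of a universal quantifier. It may be worth remarking, for later use, that the analogous monotonicity for quantales (with Definition \ref{satel} replacing Definition \ref{satelm}) follows by the same reasoning, since each of conditions (i)--(iv) there is likewise universally quantified over the pairs of the relation, so enlarging the relation can only shrink the set of saturated elements.
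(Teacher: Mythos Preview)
Your proof is correct and follows essentially the same approach as the paper: both simply observe that the defining condition \eqref{sateq} is universally quantified over pairs of the relation, so enlarging the relation can only shrink the set of saturated elements.
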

\begin{proof}
Trivially, if $s \in M$ is $\eta$-saturated, then (\ref{sateq}) holds for all $(v,w) \in \eta$ and, therefore, for all $(v,w) \in \vartheta$. Hence $s \in M_\eta$ implies $s \in M_\vartheta$.
\end{proof}

\begin{lemma}\label{satnucm}
Let $M$ and $N$ be $Q$-modules, and $f: M \to N$ a homomorphism with residuum $f_\ast: N \to M$ and associated nucleus $\g = f_\ast \circ f$. Then $M_\g$ coincide with the set of $\ker f$-saturated elements of $M$. 
\end{lemma}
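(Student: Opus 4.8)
The plan is to translate both memberships into inequalities about the adjunction between $f$ and its residuum $f_\ast$, and then to play off the two standard facts $u \le f_\ast f(u)$ (for all $u \in M$) and $f f_\ast f = f$ against each other. First I would note that, $\g$ being a closure operator, its image $M_\g$ is exactly its set of fixed points, so that $s \in M_\g$ if and only if $f_\ast f(s) = s$; and since $f_\ast$ is the residual of $f$ we always have $s \le f_\ast f(s)$, whence $s \in M_\g$ iff $f_\ast f(s) \le s$. The lemma then amounts to the equivalence of $f_\ast f(s) \le s$ with the saturation condition (\ref{sateq}) for the relation $\ker f$.

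For the inclusion $M_\g \subseteq M_{\ker f}$, suppose $f_\ast f(s) = s$, let $(v,w) \in \ker f$ (so $f(v) = f(w)$), and let $a \in Q$. If $av \le s$, then applying the monotone module homomorphism $f$ gives $a f(v) = f(av) \le f(s)$; since $f(v) = f(w)$ this reads $f(aw) = a f(w) \le f(s)$, and residuating yields $aw \le f_\ast f(s) = s$. By the symmetry of $\ker f$ the converse implication follows in the same way, so $s$ is $\ker f$-saturated.

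For the reverse inclusion, suppose $s$ is $\ker f$-saturated. The standard identity $f f_\ast f = f$ gives $f(\g(s)) = f f_\ast f(s) = f(s)$, so $(s, \g(s)) \in \ker f$. Applying (\ref{sateq}) to this particular pair with the scalar $a = 1$ yields $1 \cdot s \le s$ iff $1 \cdot \g(s) \le s$; the left-hand side holds trivially, so $\g(s) \le s$, and therefore $s = \g(s) \in M_\g$.

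I do not expect a genuine obstacle here; the argument is a routine manipulation of the adjunction. The only mildly interesting feature is its asymmetry: the inclusion $M_\g \subseteq M_{\ker f}$ really uses (\ref{sateq}) for every scalar $a \in Q$, whereas the converse needs only the single instance $a = 1$, applied to the canonically chosen pair $(s, \g(s))$. One should be careful to invoke nothing beyond the adjunction facts ($f$ monotone, $f(x) \le y \iff x \le f_\ast(y)$, and $f f_\ast f = f$) together with the action-preservation $f(av) = a f(v)$ of a module homomorphism, all of which are part of the standing setup.
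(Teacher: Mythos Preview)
Your proof is correct and follows essentially the same route as the paper's: for $M_\g \subseteq M_{\ker f}$ you use monotonicity of $f$, the equality $f(v)=f(w)$, and the adjunction to pass from $av\le s$ to $aw\le f_\ast f(s)=s$, and for the reverse inclusion you exploit $f f_\ast f = f$ to obtain $(s,\g(s))\in\ker f$ and then apply saturation with $a=1$. The only cosmetic difference is that the paper phrases the first inclusion by taking an arbitrary $\g(u)$ in the image of $\g$ rather than a fixed point $s=\g(s)$, but since the image and the fixed-point set of a closure operator coincide, the two formulations are interchangeable.
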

\begin{proof}
First, recall that the properties of residuated maps guarantee that, for all $u \in M$, $\g(u) = \max\{v \in M \mid f(v) \leq f(u)\}$. By definition, an element $s$ of $M$ is $\ker f$-saturated if, for all $a \in Q$ and $v, w \in M$ such that $f(v) = f(w)$, $av \leq s$ iff $aw \leq s$. Now, if $f(v) = f(w)$ and $av \leq \g(u)$ for some $u \in M$, then $f(av) \leq f \g(u) = f f_\ast f(u) = f(u)$ and therefore $f(aw) = a f(w) = a f(v) = f(av) \leq f(u)$, from which we deduce $aw \leq \g(u)$. The inverse implication is completely analogous, hence $\g(u)$ is $\ker f$-saturated, for all $u \in M$, namely, $M_\g \subseteq M_{\ker f}$.

Conversely, let $s \in M_{\ker f}$. Since $f(s) = f f_\ast f(s) = f \g(s)$, $(s,\g(s)) \in \ker f$ and therefore we have $s \leq s$ iff $\g(s) \leq s$, which implies $\g(s) \leq s$. On the other hand, $u \leq \g(u)$ for all $u \in M$, hence $s = \g(s) \in M_\g$, and the assertion is proved.
\end{proof}

\begin{theorem}\label{satquom}
Let $M$ be a $Q$-module, $\vartheta \subseteq M^2$, and
$$\rho_\vartheta: v \in M \mapsto \bigwedge\{s \in M_\vartheta \mid v \leq s\} \in M.$$
Then $\rho_\vartheta$ is a $Q$-module nucleus whose image is $M_\vartheta$. Moreover, $M_\vartheta$, with the structure induced by $\rho_\vartheta$, is isomorphic to the quotient of $M$ w.r.t. the congruence generated by $\vartheta$.
\end{theorem}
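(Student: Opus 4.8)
The plan is to reproduce, in the module setting, the architecture of the proof of Theorem \ref{satquo}, using Proposition \ref{satinfresm}, Lemmas \ref{RR'm} and \ref{satnucm}, and the bijective correspondence between $Q$-module nuclei and congruences recalled above.

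First I would check that $\rho_\vartheta$ is a well-defined closure operator on $M$ with image $M_\vartheta$: by Proposition \ref{satinfresm} the set $M_\vartheta$ is closed under arbitrary meets, hence is a closure system of $M$ (note $\top = \bigwedge\emptyset \in M_\vartheta$, so $\rho_\vartheta$ is everywhere defined), and $\rho_\vartheta$ is precisely its associated closure operator. To see that $\rho_\vartheta$ is a $Q$-module nucleus I need $a\,\rho_\vartheta(v) \leq \rho_\vartheta(av)$ for all $a \in Q$ and $v \in M$, equivalently $v \leq a\under \rho_\vartheta(av)$. Now $\rho_\vartheta(av) \in M_\vartheta$, so by the second part of Proposition \ref{satinfresm} we get $a\under \rho_\vartheta(av) \in M_\vartheta$; moreover $av \leq \rho_\vartheta(av)$ by extensivity, i.e. $v \leq a\under \rho_\vartheta(av)$. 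Since $\rho_\vartheta(v)$ is the least $\vartheta$-saturated element above $v$, the desired inequality follows, so $\rho_\vartheta$ is a nucleus.

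The heart of the argument is to identify $\ker \rho_\vartheta$ with the congruence $\Theta$ generated by $\vartheta$. For one inclusion: if $(v,w) \in \vartheta$, then taking $a = 1$ in (\ref{sateq}) shows that every $s \in M_\vartheta$ satisfies $v \leq s \iff w \leq s$; hence $\{s \in M_\vartheta \mid v \leq s\} = \{s \in M_\vartheta \mid w \leq s\}$, so $\rho_\vartheta(v) = \rho_\vartheta(w)$ and thus $\vartheta \subseteq \ker \rho_\vartheta$. Since $\rho_\vartheta$ is a nucleus, $\ker \rho_\vartheta$ is a $Q$-module congruence, whence $\Theta \subseteq \ker \rho_\vartheta$. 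For the reverse inclusion, let $\g = q_\ast \circ q$ be the nucleus associated with the quotient map $q : M \to M/\Theta$, so that $\ker \g = \Theta$. By Lemma \ref{satnucm}, $M_\g$ is exactly the set of $\Theta$-saturated elements of $M$; since $\vartheta \subseteq \Theta$, Lemma \ref{RR'm} gives $M_\g \subseteq M_\vartheta$. Viewing this as an inclusion of closure systems on $M$, the associated closure operators compare the other way, i.e. $\rho_\vartheta(v) \leq \g(v)$ for all $v$ (a meet over a larger set is smaller). Then a routine property of closure operators yields $\ker \rho_\vartheta \subseteq \ker \g = \Theta$: if $\rho_\vartheta(v) = \rho_\vartheta(w)$ then $v \leq \rho_\vartheta(w) \leq \g(w)$, so $\g(v) \leq \g(w)$, and symmetrically.

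Finally, from $\ker \rho_\vartheta = \Theta$ the conclusion is immediate: corestricting the nucleus $\rho_\vartheta$ to its image gives an onto $Q$-module homomorphism $M \to M_\vartheta$ with kernel $\Theta$, so by the isomorphism theorem $M_\vartheta$, with the structure induced by $\rho_\vartheta$, is isomorphic to $M/\Theta$ (and, via the one-to-one correspondence between nuclei and congruences, $\rho_\vartheta$ is in fact the nucleus $\g$ of $\Theta$, so $M_\vartheta = M_\g$). I expect the one genuinely delicate point to be the inclusion $\ker \rho_\vartheta \subseteq \Theta$: the rest is either formal or a direct transcription of the quantale case, whereas here one must route through the auxiliary nucleus $\g$, using Lemmas \ref{satnucm} and \ref{RR'm} in an essential way to get the comparison $\rho_\vartheta \leq \g$.
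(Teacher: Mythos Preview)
Your proposal is correct and follows essentially the same route as the paper: both use Proposition~\ref{satinfresm} to get that $\rho_\vartheta$ is a nucleus with image $M_\vartheta$, and both pivot the second part on Lemmas~\ref{RR'm} and~\ref{satnucm} applied to the nucleus $\g$ of the congruence generated by $\vartheta$. The only cosmetic difference is that you argue directly that $\ker\rho_\vartheta = \Theta$ (via $M_\g \subseteq M_\vartheta \Rightarrow \rho_\vartheta \leq \g \Rightarrow \ker\rho_\vartheta \subseteq \ker\g$), whereas the paper phrases the same step as the sandwich $M_{\ker\rho_\vartheta} \subseteq M_\eta \subseteq M_\vartheta = M_{\ker\rho_\vartheta}$ of closure systems; these are two ways of reading the same inclusion, so the arguments coincide.
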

\begin{proof}
By Proposition \ref{satinfresm}, $\rho_\vartheta[M] \subseteq M_\vartheta$. On the other hand, obviously, $\rho_\vartheta(s) = s$ for all $s \in M_\vartheta$, and therefore $\rho_\vartheta[M] = M_\vartheta$. It self-evident also that $\rho_\vartheta$ is monotone, extensive, and idempotent w.r.t. composition, i.~e. it is a closure operator. So, in order to prove that $\rho_\vartheta$ is a nucleus, we only need to show that $a\rho_\vartheta(v) \leq \rho_\vartheta(av)$ for all $a \in Q$ and $v \in M$.

Let $s \in M_\vartheta$, $a \in Q$, and $v \in M$. We have $\rho_\vartheta(av) \leq s$ iff $av \leq s$ iff $v \leq a \under s$ iff $\rho_\vartheta(v) \leq a \under s$ iff $a\rho_\vartheta(v) \leq s$. Then, setting $s = \rho_\vartheta(av)$ in the above sequence of equivalences, we get $a\rho_\vartheta(v) \leq \rho_\vartheta(av)$ for all $a \in Q$ and $v \in M$.

Now, once proved that $\rho_\vartheta$ is a nucleus, we can consider $M_\vartheta$ with its $Q$-module structure induced by $\rho_\vartheta$, and we have that the mapping $v \in M \mapsto \rho_\vartheta(v) \in M_\vartheta$ is an onto homomorphism (that we will still denote by $\rho_\vartheta$). By Lemma \ref{satnucm}, we get $M/\ker\rho_\vartheta \cong M_{\rho_\vartheta} = M_\vartheta = M_{\ker\rho_\vartheta}$. Since $\vartheta \subseteq \ker\rho_\vartheta$, if $\eta$ is the congruence generated by $\vartheta$, then $\eta \subseteq \ker\rho_\vartheta$. Denote by $p_\eta$ the natural projection of $M$ over $M/\eta$ and by $\g$ the nucleus on $M$ induced by $p_\eta$. Then, by Lemma \ref{satnucm}, $M_\eta = M_\g \cong M/\eta$. Hence, by Lemma \ref{RR'm} and the first part of this proof, we obtain $M/\ker\rho_\vartheta \cong M_{\rho_\vartheta} = M_{\ker\rho_\vartheta} \subseteq M_\eta \subseteq M_\vartheta = M_{\ker\rho_\vartheta}$. The assertion follows.  
\end{proof}

\begin{remark}\label{nuclrem}
In the rest of the paper, for any given $Q$-module nucleus $\g: M \to M$, we shall denote by the same symbol also the $Q$-module homomorphism $x \in M \mapsto \g(x) \in M_\g$ from $M$ to the $\g$-closed system $M_\g$ with the structure induced by the nucleus. The codomain of the mapping or the context will always make clear how are we thinking of it in each instance.
\end{remark}

\section{Deductive systems as quantale modules}
\label{quantlog}

In this section we shall briefly recall some basic definitions about propositional deductive systems and their representation as quantale modules. We refer the reader to \cite{russothesis} for a detailed account.

Given a propositional language $\lang = (L, \nu)$, $\nu: L \to \omega$ being the arity function, and a denumerable set of variables $\Var$, the set $\fml$ of \emph{$\lang$-formulas} over $\Var$ is defined, as usual, as the term (or the absolutely free) $\lang$-algebra over $\Var$. The \emph{substitution monoid $\sfm$ over $\lang$} is the monoid of $\lang$-endomorphisms of $\fml$. We remark that, since $\fml$ is a term algebra, each substitution is completely determined by its values on the variables. Starting from formulas, it is possible to define
\begin{itemize}
\item the set $\Eq$ of $\lang$-equations as $\fml^2$, and
\item for all $T \subseteq \omega^2$, the set $\seq_T$ of \emph{sequents} closed under the \emph{types} in $T$ as $\bigcup\limits_{(m,n) \in T} \fml^m \times \fml^n$.
\end{itemize}
An \emph{inference rule} over $\lang$ is a pair $(\Phi,\psi)$ where $\Phi$ is a set of formulas, equations, or sequents of a fixed type, and $\psi$ is a single formula, equation, or sequent of the same type. Then we say that $\phi$ is \emph{directly derivable} from $\Psi$ by the rule $(\Phi,\psi)$ if there is a substitution $\sigma$ such that $\sigma \psi = \phi$ and $\sigma[\Phi] \subseteq \Psi$. An inference rule $(\Phi,\psi)$ is usually denoted by $\frac{\Phi}{\psi}$.

An \emph{axiom} in the language $\lang$ is simply a formula (or an equation, or a sequent) in $\lang$.

\begin{definition}\label{consrel}
A \emph{propositional deductive system}, or a \emph{propositional logic} for short, $S$ over a given language $\lang$, is defined by means of a (possible infinite) set of inference rules and axioms. It consists of the pair $S = (D, \vdash)$, where $\vdash$ is a subset of $\wp(D) \times D$ -- $D$ being the set of $\lang$-formulas, the one of $\lang$-equations, or a set of $\lang$-sequents closed under type -- defined by the following condition: $\Phi \vdash \psi$ iff $\psi$ is contained in the smallest set of formulas that includes $\Phi$ together with all substitution instances of the axioms of $S$, and is closed under direct derivability by the inference rules of $S$. The relation $\vdash$ is called the \emph{consequence relation} of $S$.
\end{definition}
It is well-known (see  \cite{lossuszko}) that, given a language $\lang$, any consequence relation of an $\lang$-deductive system $S = (D, \vdash)$ verifies the conditions below for all $\{\phi, \psi\}, \Phi, \Psi \in \wp D$ and, reciprocally, any subset $\vdash$ of $\wp(D) \times D$ which satisfies such conditions is the consequence relation for some deductive system over $\lang$:
\begin{itemize}
\item if $\psi \in \Phi$ then $\Phi \vdash \psi$;
\item if $\Phi \vdash \psi$ and $\Psi \vdash \phi$ for all $\phi \in \Phi$, then $\Psi \vdash \psi$;
\item if $\Phi \vdash \psi$ then $\sigma[\Phi] \vdash \sigma \psi$ for every substitution $\sigma \in \sfm$.
\end{itemize}
In addition, if the inference rules are finitary, namely, have a finite set of premises, $\vdash$ is said to be \emph{finitary} and the following holds too:
\begin{itemize}
\item if $\Phi \vdash \psi$ then $\Phi_0 \vdash \psi$ for some finite $\Phi_0 \subseteq \Phi$.
\end{itemize}
Moreover, $\vdash$ can be equivalently defined as a binary relation on $\wp D$ satisfying the following conditions for all $\Phi, \Psi, \Xi \in \wp D$:
\begin{itemize}
\item if $\Psi \subseteq \Phi$, then $\Phi \vdash \Psi$;
\item if $\Phi \vdash \Psi$ and $\Psi \vdash \Xi$, then $\Phi \vdash \Xi$;
\item $\Phi \vdash \bigcup_{\Phi \vdash \Psi} \Psi$;
\item $\Phi \vdash \Psi$ implies $\sigma[\Phi] \vdash \sigma[\Psi]$ for each substitution $\sigma \in \sfm$.
\end{itemize}
With such a definition, $\vdash$ is finitary if and only if, for all subsets $\Phi, \Psi$ of $D$, with $\Psi$ finite, $\Phi \vdash \Psi$ implies that $\Phi_0 \vdash \Psi$ for some finite $\Phi_0 \subseteq \Phi$.

The set $\{\Phi \in \wp D \mid \forall\Psi(\Phi \vdash \Psi \To \Psi \subseteq \Phi)\}$ is a closure system of the lattice $\wp D$, hence a complete lattice, usually called the \emph{lattice of theories} of the system, and denoted by $\th_\vdash$ or, simply, $\th$ when there is no danger of confusion. It is worthwhile remarking that, in the case of an equational system with corresponding algebraic variety $\cat V$, such a lattice is isomorphic to the lattice of fully invariant congruences on the free algebra over $\omega$ generators in $\cat V$.

Starting from the approach to deductive systems by means of consequence operators, which goes back at least to Taski's work, Galatos and Tsinakis \cite{galtsi} presented a representation of them which uses complete posets acted on by complete residuated partially ordered monoids. Such a representation was eventually reformulated in terms of modules over quantales and further investigated in \cite{russothesis} and \cite{russoapal}.

The simple yet successful idea of Galatos and Tsinakis started from the observation that order theory alone is not sufficient to fully describe the complexity of a deductive system. Indeed, although certain closure operators are able to describe a consequence relation, they cannot take into account the language, while it is possible to address this issue by adding an ``algebraic side''.

More precisely, if $D$ is $\fml$, $\Eq$, or any $\seq_T$, we have a left monoid action from $\sfm$ to $D$. By applying the left adjoints to the forgetful functors from quantales to monoids and from sup-lattices to sets respectively, we obtain a quantale $\Sfm$ and a left $\Sfm$-module $\wp D$. Thanks to this change of perspective, one can see a consequence relation $\vdash$ as a $\Sfm$-module nucleus on $\wp D$, as the following result shows
\begin{proposition}[Lemma 3.5 -- \cite{galtsi}]
For any consequence relation $\vdash$ on $\wp D$, the mapping
$$\g_\vdash: \Phi \in \wp D \mapsto \bigcup_{\Phi \vdash \Psi} \Psi \in \wp D$$
is a $\Sfm$-module nucleus. Reciprocally, for any $\Sfm$-module nucleus $\g$ on $\wp D$, the relation
$$\Phi \vdash_\g \Psi \iff \Psi \subseteq \g(\Phi)$$
is a consequence relation on $\wp D$.  

Moreover, $\vdash_{\g_\vdash} = \ \vdash$ and $\g_{\vdash_\g} = \g$, for any consequence relation $\vdash$ and for any nucleus $\g$.
\end{proposition}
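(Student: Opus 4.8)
The plan is to carry out the whole argument in the \emph{binary-relation} formulation of a consequence relation recalled just above — reflexivity ($\Psi\subseteq\Phi$ implies $\Phi\vdash\Psi$), transitivity ($\Phi\vdash\Psi$ and $\Psi\vdash\Xi$ imply $\Phi\vdash\Xi$), the closure condition $\Phi\vdash\bigcup_{\Phi\vdash\Psi}\Psi$, and structurality ($\Phi\vdash\Psi$ implies $\sigma[\Phi]\vdash\sigma[\Psi]$) — and to isolate first one auxiliary equivalence that does most of the work: for a consequence relation $\vdash$ one has $\Phi\vdash\Psi$ if and only if $\Psi\subseteq\g_\vdash(\Phi)$. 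Indeed, if $\Phi\vdash\Psi$ then $\Psi$ is one of the sets in the union defining $\g_\vdash(\Phi)$; conversely, if $\Psi\subseteq\g_\vdash(\Phi)$ then reflexivity gives $\g_\vdash(\Phi)\vdash\Psi$, while $\Phi\vdash\g_\vdash(\Phi)$ is precisely the closure condition, so transitivity yields $\Phi\vdash\Psi$.

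From this equivalence the closure-operator properties of $\g_\vdash$ are immediate: extensivity is the instance $\Phi\vdash\Phi$ of reflexivity; monotonicity follows since $\Phi\subseteq\Phi'$ gives $\Phi'\vdash\Phi$ and hence $\Phi'\vdash\g_\vdash(\Phi)$ by transitivity; and idempotency follows because $\g_\vdash(\Phi)\vdash\Psi$ together with $\Phi\vdash\g_\vdash(\Phi)$ and transitivity forces $\Psi\subseteq\g_\vdash(\Phi)$. For the nucleus inequality $A\cdot\g_\vdash(\Phi)\subseteq\g_\vdash(A\cdot\Phi)$, with $A\in\Sfm$ and $\Phi\in\wp D$, I would reduce to a singleton $A=\{\sigma\}$, where $A\cdot\Phi=\sigma[\Phi]$: applying structurality to the closure instance $\Phi\vdash\g_\vdash(\Phi)$ gives $\sigma[\Phi]\vdash\sigma[\g_\vdash(\Phi)]$, which by the auxiliary equivalence is exactly $\sigma[\g_\vdash(\Phi)]\subseteq\g_\vdash(\sigma[\Phi])$; the general case then follows from $A=\bigcup_{\sigma\in A}\{\sigma\}$, since the action distributes over this union in its first argument and $\g_\vdash$ is monotone. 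Hence $\g_\vdash$ is an $\Sfm$-module nucleus.

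For the converse, given an $\Sfm$-module nucleus $\g$ I would verify the four axioms for $\vdash_\g$ by unwinding definitions: reflexivity is extensivity of $\g$; transitivity is monotonicity together with idempotency, since $\Psi\subseteq\g(\Phi)$ and $\Xi\subseteq\g(\Psi)$ give $\Xi\subseteq\g(\g(\Phi))=\g(\Phi)$; the closure condition holds because $\bigcup_{\Phi\vdash_\g\Psi}\Psi=\bigcup_{\Psi\subseteq\g(\Phi)}\Psi=\g(\Phi)$, so $\Phi\vdash_\g\g(\Phi)$ amounts to $\g(\Phi)\subseteq\g(\Phi)$; and structurality is the nucleus inequality for $\{\sigma\}$, since $\Psi\subseteq\g(\Phi)$ implies $\sigma[\Psi]\subseteq\{\sigma\}\cdot\g(\Phi)\subseteq\g(\{\sigma\}\cdot\Phi)=\g(\sigma[\Phi])$. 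Finally, the two assignments are mutually inverse by direct computation: $\vdash_{\g_\vdash}\,=\,\vdash$ is literally the auxiliary equivalence ($\Phi\vdash_{\g_\vdash}\Psi\iff\Psi\subseteq\g_\vdash(\Phi)\iff\Phi\vdash\Psi$), and $\g_{\vdash_\g}=\g$ follows from $\g_{\vdash_\g}(\Phi)=\bigcup_{\Psi\subseteq\g(\Phi)}\Psi=\g(\Phi)$.

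I do not anticipate a genuine obstacle: once the auxiliary equivalence is available, the rest is a routine unwinding of the closure-operator axioms and of the module structure. The one step that deserves a moment's care is the reduction of the nucleus inequality to single substitutions — it is legitimate precisely because $A=\bigcup_{\sigma\in A}\{\sigma\}$ in $\Sfm$, the $\Sfm$-action on $\wp D$ distributes over such unions, and $\g$ is monotone, so that both sides of $A\cdot\g(\Phi)\subseteq\g(A\cdot\Phi)$ are compatible with this decomposition; this is the only place where the concrete description of $\wp D$ as an $\Sfm$-module is actually needed.
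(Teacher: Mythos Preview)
Your proof is correct and complete. The paper does not supply its own proof of this proposition --- it is quoted as Lemma~3.5 of \cite{galtsi} and stated without argument --- so there is nothing to compare against. Your approach via the auxiliary equivalence $\Phi\vdash\Psi\iff\Psi\subseteq\g_\vdash(\Phi)$ is the natural one, and all verifications are handled cleanly; the reduction of the nucleus inequality to singleton scalars, which you flag as the only delicate point, is indeed justified exactly as you say.
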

Given a consequence relation $\vdash$ with associated nucleus $\g$, the $\g$-closed system $\wp D_\g$ coincide with the lattice of theories $\th$ of $\vdash$. Thanks to the previous result, we can think of a consequence relation either as a binary relation or as a $\Sfm$-module nucleus, and we shall use either one of the notations depending on convenience. Similarly, we shall indifferently denote the lattice of theories by $\wp D_\g$, $\th_\vdash$ or $\th_\g$. 

\begin{remark}\label{nontrivial}
In the rest of the paper, we shall always assume the consequence relations to be non-trivial, i.~e., such that $\g(\varnothing) \neq D$ and $\g(\{x\}) = \{x\} \cup \g(\varnothing)$ for all $x \in \Var$.
\end{remark}

We conclude this section by recalling the following relevant notations and results from \cite{galtsi}.

Given a propositional language $\lang$ and $x, y, x_1, \ldots, x_{m+n} \in \Var$, let $\{V_x, V_y\}$ and $\{V_1,\ldots, V_{m+n}\}$ be partitions of $\Var$. Further, let us denote, respectively, by $\kappa_x$ the unique substitution which sends every variable to $x$, by $\kappa_{x \approx y}$ the one which sends every element of $V_x$ to $x$ and every variable in $V_y$ to $y$, and by $\kappa_{(x_1, \ldots, x_{m+n})}$ the substitution sending each variable in $V_i$ to $x_i$, for $i = 1, \ldots, m + n$. Then Corollary 5.9 and Theorem 5.13 from \cite{galtsi} can be stated as follows.
\begin{theorem}\label{kappax}
The $\Sfm$-module $\Fml$ is generated by $\{x\}$ and is isomorphic to the $\Sfm$-submodule of $\Sfm$ generated by $\{\kappa_x\}$.

Analogously, $\wp\Eq$ is generated by $x \approx y$ and is isomorphic to the $\Sfm$-submodule of $\Sfm$ generated by $\{\kappa_{x \approx y}\}$, and $\wp \seq_T$ is generated by the set $\{x_1, \ldots, x_m \To x_{m+1}, \ldots, x_{m+n} \mid (m,n) \in T\}$ and is isomorphic to the coproduct\footnote{Products and coproducts in $Q\Mod$ have the same object, namely, the Cartesian product with componentwise operations \cite[Proposition 4.2.3]{russothesis}.}  of the $\Sfm$-submodules of $\Sfm$ generated by $\{\kappa_{(x_1, \ldots, x_{m+n})}\}$, for $(m,n) \in T$.

Moreover, all of such modules are projective.
\end{theorem}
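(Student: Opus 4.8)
The plan is to reduce all three assertions, together with projectivity, to a single lemma about powers of $\fml$, and then to prove that lemma by unwinding the definitions of the powerset quantale $\Sfm=\wp\sfm$, of the direct-image action making $\wp D$ an $\Sfm$-module, and of composition in the substitution monoid. Fix $k\geq 1$, pairwise distinct variables $x_1,\dots,x_k$, a partition $\{V_1,\dots,V_k\}$ of $\Var$ with $x_i\in V_i$, and let $\kappa=\kappa_{(x_1,\dots,x_k)}$ be the associated ``projection'' substitution; write $\mathbf{x}=(x_1,\dots,x_k)$, and for $(\phi_1,\dots,\phi_k)\in\fml^k$ let $\kappa_{(\phi_1,\dots,\phi_k)}$ be the substitution sending every variable of $V_i$ to $\phi_i$. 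The core lemma I would prove is: (a) $\wp(\fml^k)$ is generated as an $\Sfm$-module by $\{\mathbf{x}\}$; (b) the assignment $\Theta\mapsto\{\kappa_{(\phi_1,\dots,\phi_k)}\mid(\phi_1,\dots,\phi_k)\in\Theta\}$ is an isomorphism of $\wp(\fml^k)$ onto the $\Sfm$-submodule of $\Sfm$ generated by $\{\kappa\}$; (c) $\wp(\fml^k)$ is a retract of the free $\Sfm$-module $\Sfm$ (on one generator), hence projective. Granting this, the first assertion of the theorem is the case $k=1$ (with $\kappa=\kappa_x$), the equational assertion is the case $k=2$ (with $\kappa=\kappa_{x\approx y}$ and $\fml^2=\Eq$), and for sequents one keeps the type $(m,n)$ as a label, so that $\seq_T$ is literally the disjoint union of the sets $\fml^m\times\fml^n=\fml^{m+n}$ over $(m,n)\in T$ and substitution acts componentwise and type-preservingly; hence $\wp\seq_T\cong\prod_{(m,n)\in T}\wp(\fml^{m+n})$ as $\Sfm$-modules, which by the footnoted coincidence of products and coproducts in $Q\Mod$ is $\coprod_{(m,n)\in T}\wp(\fml^{m+n})$, and under this identification the canonical sequents $x_1,\dots,x_m\To x_{m+1},\dots,x_{m+n}$ are exactly the coprojections of the generators $\{\mathbf{x}^{(m,n)}\}$; projectivity of $\wp\seq_T$ then follows since a coproduct of retracts of $\Sfm$ is a retract of a (free) coproduct of copies of $\Sfm$.

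For (a), the key point is that every formula is a substitution instance of every single variable: since $\fml$ is the absolutely free $\lang$-algebra on $\Var$ and the $x_i$ are distinct, for each $(\phi_1,\dots,\phi_k)\in\fml^k$ the set $A=\{\sigma\in\sfm\mid\sigma x_i=\phi_i,\ i\leq k\}$ is nonempty and $A\cdot\{\mathbf{x}\}=\{(\sigma x_1,\dots,\sigma x_k)\mid\sigma\in A\}=\{(\phi_1,\dots,\phi_k)\}$. Thus every singleton of $\wp(\fml^k)$ lies in the $\Sfm$-submodule generated by $\{\mathbf{x}\}$, and since an arbitrary $\Theta$ is the join (union) of its singletons and a submodule is closed under arbitrary joins, that submodule is all of $\wp(\fml^k)$.

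For (b) I would first make the target submodule explicit. As $\Sfm=\wp\sfm$ is closed under unions, the $\Sfm$-submodule generated by $\{\kappa\}$ equals $\{A\cdot\{\kappa\}\mid A\subseteq\sfm\}$; now $\sigma\circ\kappa$ sends each variable of $V_i$ to $\sigma x_i$, i.e.\ $\sigma\circ\kappa=\kappa_{(\sigma x_1,\dots,\sigma x_k)}$, and since $\sigma\mapsto(\sigma x_1,\dots,\sigma x_k)$ is onto $\fml^k$ (again because the $x_i$ are distinct) the submodule is precisely $\{\,\{\kappa_{(\phi_1,\dots,\phi_k)}\mid(\phi_1,\dots,\phi_k)\in\Theta\}\mid\Theta\subseteq\fml^k\,\}$. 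The assignment in (b) is then a bijection onto it; it is injective because each $V_i$ is nonempty, so $\kappa_{(\phi_1,\dots,\phi_k)}$ recovers $(\phi_1,\dots,\phi_k)$; it preserves unions; and it intertwines the scalar actions because $\sigma\circ\kappa_{(\phi_1,\dots,\phi_k)}=\kappa_{(\sigma\phi_1,\dots,\sigma\phi_k)}$, which matches the componentwise action of $\sigma$ on tuples. Hence it is an $\Sfm$-module isomorphism. For (c), the canonical surjection $f\colon\Sfm\to\wp(\fml^k)$, $A\mapsto A\cdot\{\mathbf{x}\}$, is a module homomorphism, and $g\colon\Theta\mapsto\{\kappa_{(\phi_1,\dots,\phi_k)}\mid(\phi_1,\dots,\phi_k)\in\Theta\}$ is a module homomorphism (being the composite of the isomorphism of (b) with the inclusion into $\Sfm$) with $f\circ g=\id$, since $\kappa_{(\phi_1,\dots,\phi_k)}(x_i)=\phi_i$ (here $x_i\in V_i$). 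So $\wp(\fml^k)$ is a retract of the free module $\Sfm$, hence projective.

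I do not expect a genuine obstacle: each step is a direct computation once the powerset-quantale and direct-image-module structures are spelled out, and the reduction to the core lemma only uses the universal-algebraic freeness of $\fml$. The one place that repays care is the sequent case — treating $\seq_T$ as a genuine disjoint union indexed by the types in $T$, so that the $\Sfm$-action is literally componentwise and the isomorphism $\wp\seq_T\cong\coprod_{(m,n)\in T}\wp(\fml^{m+n})$ (via the footnoted fact about $Q\Mod$) carries the canonical sequents onto the coproduct's generators. This argument recovers Corollary~5.9 and Theorem~5.13 of \cite{galtsi}.
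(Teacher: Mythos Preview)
Your argument is correct. The reduction to a single lemma about $\wp(\fml^k)$ is clean, and each of (a)--(c) goes through as you indicate: freeness of $\fml$ gives generation by $\{\mathbf x\}$; the computation $\sigma\circ\kappa=\kappa_{(\sigma x_1,\dots,\sigma x_k)}$ (together with $x_i\in V_i$) yields both the isomorphism onto the cyclic submodule $\Sfm\cdot\{\kappa\}$ and the retraction witnessing projectivity; and the sequent case is handled correctly by noting that types are preserved by substitutions, so $\wp\seq_T$ splits as the (co)product over $T$.

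There is nothing to compare your approach to: the paper does not prove this theorem but records it as a restatement of Corollary~5.9 and Theorem~5.13 of \cite{galtsi}. Your write-up is essentially a self-contained reconstruction of those results, and you already acknowledge this in your final sentence.
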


\section{Recovering a module of theories as a fragment after a language expansion}
\label{tensor}

It was proved in \cite{russoapal} that each quantale morphism $h: Q \to R$ induces an adjoint and co-adjoint functor $( \ )_h: R\Mod \to Q\Mod$, whose left adjoint is $R \tensor_Q \underline{\ \ }$. For details on the construction of the tensor product of quantale modules the reader may refer to \cite[Theorem 6.3]{russocorr}; we report here its assertion for the reader's convenience.
\begin{theorem}\label{tensormqexists}
Let $M_1$ be a right $Q$-module and $M_2$ a left $Q$-module. Then the tensor product $M_1 \tensor_Q M_2$ of the $Q$-modules $M_1$ and $M_2$ exists. It is, up to isomorphisms, the quotient $\wp(M_1 \times M_2)/\theta_R$ of the free sup-lattice generated by $M_1 \times M_2$ with respect to the (sup-lattice) congruence relation generated by the set
\begin{equation}\label{R}
\rho = \left\{
	\begin{array}{l}
		\left(\left\{\left(\bigvee X, y\right)\right\}, \bigcup_{x \in X}\{(x,y)\}\right) \\
		\left(\left\{\left(x, \bigvee Y\right)\right\}, \bigcup_{y \in Y}\{(x,y)\}\right) \\
		\left(\{(x \cdot_1 a, y)\}, \{(x,a \cdot_2 y)\}\right) \\
	\end{array} \right\vert
	\left.
	\begin{array}{l}
	X \subseteq M_1, y \in M_2 \\
	Y \subseteq M_2, x \in M_1 \\
	a \in Q \\
	\end{array}
	\right\}.
\end{equation}
\end{theorem}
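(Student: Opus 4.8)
The plan is to prove existence of $M_1 \tensor_Q M_2$ by the standard ``generators and relations'' argument in $\SL$, combined with the universal-property characterization of the tensor product of $Q$-modules. Recall that, for a right $Q$-module $M_1$ and a left $Q$-module $M_2$, a \emph{$Q$-bilinear} (or \emph{$Q$-balanced}) map $\beta: M_1 \times M_2 \to N$ into a sup-lattice $N$ is one that preserves arbitrary joins in each argument separately and satisfies $\beta(x \cdot_1 a, y) = \beta(x, a \cdot_2 y)$ for all $a \in Q$; the tensor product is, by definition, a sup-lattice $M_1 \tensor_Q M_2$ together with a universal $Q$-bilinear map. So the theorem amounts to exhibiting such a universal object, and the claim is that the displayed quotient does the job.

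First I would set $F = \wp(M_1 \times M_2)$, the free sup-lattice on the set $M_1 \times M_2$, with its insertion of generators $\iota: M_1 \times M_2 \to F$, $\iota(x,y) = \{(x,y)\}$. Next I would take $\theta_\rho$ to be the sup-lattice congruence on $F$ generated by the set $\rho$ of pairs in~(\ref{R}), form the quotient $F/\theta_\rho$ with projection $p: F \to F/\theta_\rho$ (a sup-lattice morphism), and define $\tau = p \circ \iota: M_1 \times M_2 \to F/\theta_\rho$, $\tau(x,y) = [\{(x,y)\}]$. The heart of the verification is then twofold. (1)~\emph{$\tau$ is $Q$-bilinear}: the three families of generators in $\rho$ were chosen precisely so that, after passing to the quotient, $\tau$ identifies $\big[\{(\bigvee X, y)\}\big]$ with $\bigvee_{x \in X}\tau(x,y)$ (using that in the free sup-lattice $\bigcup_{x\in X}\{(x,y)\}$ is the join of the $\{(x,y)\}$, and $p$ preserves joins), similarly in the second argument, and $\tau(x\cdot_1 a, y) = \tau(x, a\cdot_2 y)$ from the third family. (2)~\emph{Universality}: given any $Q$-bilinear $\beta: M_1 \times M_2 \to N$, the freeness of $F$ yields a unique sup-lattice morphism $\hat\beta: F \to N$ with $\hat\beta \circ \iota = \beta$; bilinearity of $\beta$ says exactly that $\hat\beta$ collapses every pair in $\rho$ (e.g. $\hat\beta(\{(\bigvee X,y)\}) = \beta(\bigvee X, y) = \bigvee_{x\in X}\beta(x,y) = \hat\beta(\bigcup_{x\in X}\{(x,y)\})$), hence $\ker \hat\beta \supseteq \theta_\rho$, so $\hat\beta$ factors uniquely through $p$ as $\bar\beta: F/\theta_\rho \to N$ with $\bar\beta \circ \tau = \beta$. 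Uniqueness of $\bar\beta$ follows because $\tau(M_1\times M_2)$ generates $F/\theta_\rho$ as a sup-lattice (its image generates $F$, and $p$ is onto). This establishes that $F/\theta_\rho$ with $\tau$ satisfies the universal property, so it \emph{is} $M_1 \tensor_Q M_2$, up to the unique isomorphism commuting with the bilinear maps.

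I would then note that this construction automatically endows $M_1 \tensor_Q M_2$ with whatever one-sided module structure is inherited: if $M_1$ is in addition a left $R$-module (so an $R$--$Q$-bimodule) then for each $r \in R$ the map $(x,y) \mapsto \tau(r\cdot x, y)$ is $Q$-bilinear, inducing an endomorphism of the tensor product, and these assemble into a left $R$-action making the tensor product an $R$-module; the biresiduation needed for it to be a genuine module morphism action follows since each such endomorphism preserves joins. This is the part most readers expect, but strictly it is only needed if one wants the tensor to live in a bimodule category; for the bare assertion of the theorem it can be relegated to a remark.

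The only genuinely delicate point is the passage from ``$\rho$-related elements are identified'' to ``$\theta_\rho$ is \emph{exactly} the congruence making $\tau$ universal'' — i.e.\ that we have imposed enough relations but not the wrong ones. This is handled cleanly by the universal-property argument above rather than by trying to describe $\theta_\rho$ explicitly: one never needs a normal form for elements of $F/\theta_\rho$. A secondary subtlety is size/existence of the free sup-lattice $\wp(M_1\times M_2)$ and of the generated congruence $\theta_\rho$ — but $\SL$ is a variety (in the infinitary sense) with free algebras given by powersets, and congruences generated by a set exist there, so both are unproblematic. Hence the main obstacle is essentially bookkeeping: carefully matching each of the three generator families in~(\ref{R}) with the corresponding clause in the definition of $Q$-bilinearity, in both the ``$\tau$ is bilinear'' and the ``$\hat\beta$ kills $\rho$'' directions, and checking the three join-preservation and balancing identities without sign errors. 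I expect no conceptual difficulty beyond that.
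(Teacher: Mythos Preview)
Your argument is correct and is the standard generators-and-relations verification of the universal property. Note, however, that the paper does not prove this theorem at all: it is quoted from \cite[Theorem 6.3]{russocorr} ``for the reader's convenience,'' so there is no proof in the paper to compare against. What you have written is presumably close to what appears in that reference, and in any case it is the expected construction.
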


In the same work, the author suggested that the tensor product could be used in order to expand the language of a deductive system, but no details were given. It must be mentioned that, given quantales $Q \leq R$ and a $Q$-module $M$, the mapping $x \in M \mapsto 1 \tensor x \in R \tensor_Q M$ is a $Q$-module morphism which, however, need not be an embedding in the general case.

We shall now prove that, in the particular case of the module of theories $\th$ of a propositional logic whose language is being expanded via a tensor product, such a morphism is indeed an embedding, and therefore $\th$ turns out to be a $\Sfm$-submodule of such a tensor product. 

Let $\lang_1$ be an expansion of a propositional language $\lang$, and $i: \Sfm \to \Sfmm$ be the associated quantale embedding. Further, let us consider an $\lang$-deductive system $(D, \vdash)$ with associated nucleus $\g$ and $\Sfm$-module of theories $\th$.

By Theorem \ref{kappax}, $\Fmll$ is isomorphic to the $\Sfmm$-module $\Sfmm\cdot\{\kappa_x\}$, whence $\Fml$ can be identified with the sup-sublattice $i[\Sfm\cdot \{\kappa_x\}]$ of $\Sfmm\cdot\{\kappa_x\}$, and therefore with a sup-sublattice of $\Fmll$. Similar considerations can be done for sets of equations and sequents. So, if $D$ is the domain of our deductive system on $\lang$ and $D_1$ the domain of the same type in $\lang_1$, $\wp D$ can be identified with a sup-sublattice of $\wp D_1$ which shall be denoted by $i[\wp D]$. With an abuse of notation, we will also denote by $i(\Psi)$ the element of $i[\wp D]$ corresponding to each element $\Psi$ of $\wp D$.

\begin{proposition}\label{generator}
We the above notations, there exists a consequence relation $\vdash_1$ on $\wp D_1$ whose $\Sfmm$-module of theories $\th_1$ is isomorphic to $\Sfmm \tensor_{\Sfm} \th$.
\end{proposition}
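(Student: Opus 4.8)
The plan is to construct $\vdash_1$ directly as a $\Sfmm$-module nucleus on $\wp D_1$ and then verify that its closed system is isomorphic to the tensor product. First I would recall that, by the discussion preceding the statement, the map $x \in \th \mapsto 1 \tensor x \in \Sfmm \tensor_{\Sfm} \th$ is a $\Sfm$-module morphism, and I would compose it with the embedding $i[\wp D] \hookrightarrow \wp D_1$ to obtain a $\Sfm$-module morphism $\g \circ (\text{restriction}): \wp D_1 \to \Sfmm \tensor_{\Sfm} \th$ whose domain can be extended along $i$. More precisely, since $\wp D_1 \cong \Sfmm \tensor_{\Sfm} \wp D$ (this is exactly the content of Theorem \ref{kappax} together with the fact that tensoring by $\Sfmm$ along $i$ realizes the free expansion of the module of all subsets — $\wp D_1$ is the free $\Sfmm$-module on the same set of generators that freely generates $\wp D$ over $\Sfm$), the nucleus $\g: \wp D \to \wp D$ induces, by functoriality of $\Sfmm \tensor_{\Sfm} \underline{\ \ }$, a $\Sfmm$-module morphism $\Sfmm \tensor_{\Sfm} \g: \wp D_1 \to \Sfmm \tensor_{\Sfm} \th$. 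Taking $\vdash_1$ to be the $\Sfmm$-module nucleus $f_\ast \circ f$ associated with this morphism $f = \Sfmm \tensor_{\Sfm} \g$ (in the sense recalled in Section \ref{ordtool}), one immediately gets that $\th_1 = (\wp D_1)_{\vdash_1} \cong f[\wp D_1]$, and $f$ is onto because $\g$ is onto and tensoring preserves epimorphisms (the tensor functor is a left adjoint, hence right exact). Hence $\th_1 \cong \Sfmm \tensor_{\Sfm} \th$, which is the desired conclusion.

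The key steps, in order, would be: (1) identify $\wp D_1$ with $\Sfmm \tensor_{\Sfm} \wp D$ using Theorem \ref{kappax} — for $D = \fml$ one has $\wp D \cong \Sfm \cdot \{\kappa_x\}$ as a direct summand of $\Sfm$ and similarly $\wp D_1 \cong \Sfmm \cdot \{\kappa_x\}$, and since these are the principal (hence projective) submodules generated by the image of the same generator, extension of scalars along $i$ sends one to the other; the cases of equations and sequents are handled by the product/coproduct decomposition in the same theorem; (2) observe that the nucleus $\g$, viewed as a $\Sfm$-module homomorphism $\wp D \twoheadrightarrow \th$, yields by right-exactness of $\Sfmm \tensor_{\Sfm} \underline{\ \ }$ an onto $\Sfmm$-module homomorphism $f: \wp D_1 \cong \Sfmm \tensor_{\Sfm} \wp D \twoheadrightarrow \Sfmm \tensor_{\Sfm} \th$; (3) let $\vdash_1$ be the consequence relation corresponding to the $\Sfmm$-module nucleus $f_\ast \circ f$ on $\wp D_1$; (4) conclude via the general fact (recalled in the "Representing morphisms and congruences" subsection) that for a $Q$-module morphism $f$ with residuum $f_\ast$, the nucleus $\gamma = f_\ast \circ f$ satisfies $M_\gamma \cong f[M]$, so $\th_1 = (\wp D_1)_{f_\ast f} \cong f[\wp D_1] = \Sfmm \tensor_{\Sfm} \th$.

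The main obstacle I expect is step (1): carefully justifying that $\wp D_1$ is \emph{canonically} $\Sfmm \tensor_{\Sfm} \wp D$ in a way compatible with the $\Sfm$-module embedding $i[\wp D] \hookrightarrow \wp D_1$ described just before the proposition. One must check that the universal property of the tensor product is met — i.e., that $\wp D_1$ with the obvious $\Sfm$-bilinear map $\Sfmm \times \wp D \to \wp D_1$, $(\sigma, \Psi) \mapsto \sigma[i(\Psi)]$, is initial among such — and this rests on the projectivity and the explicit generator-description of these modules from Theorem \ref{kappax}, plus the fact that $\Sfmm$ is the free quantale on the expanded substitution monoid and $\wp D_1$ the free sup-lattice on $D_1$. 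A secondary subtlety is verifying that right-exactness (preservation of surjections) genuinely holds for $\Sfmm \tensor_{\Sfm} \underline{\ \ }$ in the quantale-module setting; this follows from it being a left adjoint (to the restriction-of-scalars functor $(\ )_i$, as recalled from \cite{russoapal}), since left adjoints preserve epimorphisms, and epimorphisms in $Q\Mod$ are exactly the surjections. Once these two points are nailed down, the rest is a formal consequence of the nucleus/morphism dictionary already established in Section \ref{ordtool}.
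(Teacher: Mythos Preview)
Your proposal is correct and lands on the same argument as the paper: exhibit a surjective $\Sfmm$-module morphism $\wp D_1 \twoheadrightarrow \Sfmm \tensor_{\Sfm} \th$ and take $\vdash_1$ to be the consequence relation associated to its nucleus $f_\ast \circ f$. The paper's proof is a one-liner that reaches this surjection by noting (via \cite{russoapal}) that $\Sfmm \tensor_{\Sfm} \th$ is cyclic, generated by $1 \tensor \g(x)$, and hence a homomorphic image of the cyclic projective module $\wp D_1$; your route through the identification $\wp D_1 \cong \Sfmm \tensor_{\Sfm} \wp D$ followed by applying the functor $\Sfmm \tensor_{\Sfm} \underline{\ \ }$ to the surjection $\g$ is a more explicit, functorial unpacking of the same fact, and the two morphisms coincide.
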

\begin{proof}
It suffices to observe that, according to the remarks preceding Theorem 6.7 of \cite{russoapal}, $\Sfmm \tensor_{\Sfm} \th$ is generated by $1 \tensor \g(x)$ and, therefore, is homomorphic image of $\wp D_1$. 
\end{proof}

Now, for all $\Psi \in \wp D$, let us denote by $S_\Psi$ the set of all substitutions $\sigma \in \sfmm$ such that $\sigma i(\Psi) \in i[\wp D]$:
$$S_\Psi = i(D)/i(\Psi) = \{\sigma \in \sfmm \mid \sigma i(\Psi) \in i[\Fml]\} \in \Sfmm.$$

With the next theorem, we prove that formally expanding the language of a deductive system by means of the tensor product yields a new system whose module of theories contains an isomorphic copy of $\th$.
\begin{theorem}\label{tensembed}
$\th$ is isomorphic to a $\Sfm$-submodule of $(\Sfmm \tensor_{\Sfm} \th)_i$.
\end{theorem}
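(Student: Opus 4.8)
The plan is to show that the canonical homomorphism
$$u\colon\th\longrightarrow(\Sfmm\tensor_{\Sfm}\th)_i,\qquad t\longmapsto 1\tensor t$$
(the unit of the adjunction $\Sfmm\tensor_{\Sfm}-\ \dashv\ (-)_i$) is injective. That suffices: an injective sup-lattice homomorphism $f$ is automatically an order embedding — from $f(a)\le f(b)$ one gets $f(a\vee b)=f(a)\vee f(b)=f(b)$, hence $a\vee b=b$ — its image is closed under arbitrary joins and under the scalar action (because $f$ preserves both), and its inverse on the image is again a homomorphism; so $u[\th]$ is then an $\Sfm$-submodule of $(\Sfmm\tensor_{\Sfm}\th)_i$ isomorphic to $\th$.

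Rather than argue inside the tensor product and its congruence $\theta_R$ from Theorem~\ref{tensormqexists}, I would transport the problem along the adjunction into a concrete module. Let $S'$ be the deductive system over $\lang_1$ having exactly the axioms and inference rules of $S$, now read with substitutions ranging over $\sfmm$; write $\g'$ for its nucleus on $\wp D_1$ and $\th'$ for its (sup-)lattice of theories, an $\Sfmm$-module. Since $\sfm\hookrightarrow\sfmm$, every $S$-derivation is an $S'$-derivation, so $\Psi\vdash_S\psi$ implies $i(\Psi)\vdash_{S'}i(\psi)$; using this together with structurality of $\g'$, one checks that $j\colon t\in\th\mapsto\g'(i(t))\in(\th')_i$ is a well-defined $\Sfm$-module homomorphism. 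By the adjunction $\Sfmm\tensor_{\Sfm}-\ \dashv\ (-)_i$ recalled at the start of this section, $j$ factors as $j=(\bar\jmath)_i\circ u$ for a (unique) $\Sfmm$-module homomorphism $\bar\jmath\colon\Sfmm\tensor_{\Sfm}\th\to\th'$, so it is enough to prove that $j$ is injective.

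The crux is the conservativity identity $\g'(i(t))\cap i[D]=i(t)$ for every $t\in\th$ (here $i[D]\subseteq D_1$ denotes $D$ read in $\lang_1$). Granting it, $j$ is injective: $\g'(i(s))=\g'(i(t))$ forces $i(s)=\g'(i(s))\cap i[D]=\g'(i(t))\cap i[D]=i(t)$, i.e. $s=t$; and then $u$ is injective because $j=(\bar\jmath)_i\circ u$ is. To prove the identity, $\supseteq$ is clear. For $\subseteq$, fix once and for all an $\lang$-algebra homomorphism $h\colon\fmll\to\fml$ that restricts to the identity on $\fml$: such $h$ exists since $\fml\neq\varnothing$, defined by recursion via $h(v)=v$ on variables, $h(c(\chi_1,\dots,\chi_n))=c(h(\chi_1),\dots,h(\chi_n))$ for $c\in\lang$, and $h(c(\chi_1,\dots,\chi_n))=h(\chi_1)$ — or a fixed variable, if $c$ is a new constant — for $c\in\lang_1\setminus\lang$; this commutes with all operations of $\lang$, hence is a homomorphism of $\lang$-reducts that fixes $\fml$ pointwise. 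The decisive observation is that for every $\sigma\in\sfmm$ the composite $\sigma^h:=h\circ(\sigma\restr\fml)\colon\fml\to\fml$ is again an $\lang$-homomorphism, i.e. $\sigma^h\in\sfm$, and $h(\sigma(\alpha))=\sigma^h(\alpha)$ for every $\alpha\in\fml$. Hence, if $i(\psi)\in\g'(i(t))$ with $\psi\in D$, applying $h$ along the inductive construction of $\g'(i(t))$ turns each $\sfmm$-substitution instance of an axiom or rule of $S$ into an $\sfm$-substitution instance of the same axiom or rule, fixes the members taken from $i(t)$, and fixes $\psi$; an induction on that construction then yields $\psi\in\g(t)=t$. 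For equational and sequent systems one applies $h$ componentwise.

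I expect the last step to be the main obstacle: one has to check carefully that the ``$\lang$-skeleton'' homomorphism $h$ carries an $S'$-derivation to a genuine $S$-derivation, which hinges precisely on $h\circ(\sigma\restr\fml)$ being an honest $\lang$-substitution for every $\sigma\in\sfmm$. The remaining ingredients — well-definedness and $\Sfm$-linearity of $j$, and the factorization $j=(\bar\jmath)_i\circ u$ — are routine consequences of structurality, of $S'$ extending $S$, and of the adjunction. One could also stay closer to the tools of Section~\ref{ordtool}: by Theorems~\ref{kappax} and~\ref{satquom}, $\Sfmm\tensor_{\Sfm}\th$ is isomorphic to the module of $\theta$-saturated elements of $\wp D_1$ for $\theta=\{(i(\Psi),i(\g(\Psi)))\mid\Psi\in\wp D\}$, and then $\g'(i(t))$ is exactly the $\theta$-saturated set witnessing $\rho_\theta(i(t))\cap i[D]=i(t)$, so that $u(t)=\rho_\theta(i(t))$ reflects the order.
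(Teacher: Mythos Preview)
Your argument is correct and takes a genuinely different route from the paper's. The paper stays inside the free sup-lattice $\wp(\Sfmm\times\th)$: for each $\Phi\in\th$ it writes down an explicit $\rho$-saturated element
\[
\ov\Phi=\{(\Omega,\Psi)\mid\Omega\subseteq S_\Psi\ \text{and}\ i^{-1}[\Omega\cdot i(\Psi)]\subseteq\Phi\},
\]
verifies condition~(\ref{sateq}) for each of the three pair-types in~(\ref{R}), and concludes from the evident injectivity of $\Phi\mapsto\ov\Phi$. You instead never open up the tensor construction: you recognise $t\mapsto 1\tensor t$ as the unit of the extension/restriction adjunction, factor it through the concrete $\Sfm$-module map $j\colon t\mapsto\g'(i(t))$ into the theory lattice of the system $S'$ obtained by reading the axioms and rules of $S$ over~$\lang_1$, and prove $j$ injective by a syntactic conservativity argument via an $\lang$-retraction $h\colon\fmll\to\fml$. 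The paper's route is purely order-theoretic and needs no auxiliary deductive system; your route makes the logical content explicit --- the embedding holds because a bare language expansion cannot collapse old theories --- and in effect anticipates Theorem~\ref{mergetensor}, where $\Sfmm\tensor_{\Sfm}\th$ is identified with precisely your~$\th'$. One stylistic suggestion: your ``induction on the construction of $\g'(i(t))$'' is cleanest as a closure argument --- the set $\{\phi\in D_1\mid h(\phi)\in t\}$ (with $h$ extended componentwise to $D_1$) contains $i(t)$ and all $\sfmm$-instances of the axioms and is closed under the rules, hence contains $\g'(i(t))$ --- which also dispatches infinitary rules without further comment.
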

\begin{proof}
Let, for all $\Phi \in \th$, $\ov\Phi$ be the following element of $\wp(\Sfmm \times \th)$:
\begin{equation}\label{ovphi}
\ov\Phi = \{(\Omega, \Psi) \in \Sfmm \times \wp\th \mid \Omega \subseteq S_\Psi \ \& \ i^{-1}[\Omega \cdot i(\Psi)] \subseteq \Phi)\}.
\end{equation}

Recalling that sup-lattices can be seen also as modules over the two-element quantale $\{0,1\}$, we shall prove that, for each $\Phi \in \th$, $\ov\Phi$ is a saturated element of the relation defined as in (\ref{R}) by proving that condition (\ref{sateq}) is verified for all the pair types in (\ref{R}). The scalar $a$ in (\ref{sateq}) shall be dropped because it is actually $\{\id\}$, while the case of $a = \varnothing$ is trivial. 

Let us check (\ref{sateq}) for the first type of pairs in (\ref{R}). Let $X \subseteq \Sfmm$ and $\Psi \in \th$; then
$$\begin{array}{l}
\{(\bigvee X, \Psi)\} \subseteq \ov\Phi \\
\iff \bigvee X = \bigcup\limits_{\Omega \in X} \Omega \subseteq S_\Psi \text{ and }  i^{-1}[\bigvee X \cdot i(\Psi)] \subseteq \Phi \\
\iff \forall \Omega \in X, \ \Omega \subseteq S_\Psi \text{ and } i^{-1}[\Omega \cdot i(\Psi)] \subseteq \Phi \\
\iff \bigcup_{\Omega \in X}\{(\Omega, \Psi)\} \subseteq \ov\Phi.
\end{array}.$$

For what concerns the second type, let $\Omega \in \Sfmm$ and $Y \subseteq \th$. Then we have:
$$\begin{array}{l}
\{(\Omega,\bigvee Y)\} \subseteq \ov\Phi \\
\iff \left[\begin{array}{l}
\Omega \subseteq S_{\bigvee Y} \\
\text{ and } \\
i^{-1}[\Omega \cdot i(\bigvee Y)] = i^{-1}[\Omega \cdot i(\bigcup\limits_{\Psi \in Y}\Psi)] = \bigcup\limits_{\Psi \in Y} i^{-1}[\Omega \cdot i(\Psi)]\subseteq \Phi 
\end{array}\right.\\
\iff \Omega \subseteq S_{\bigcup_{\Psi \in Y} \Psi} \text{ and } \forall \Psi \in Y \ (i^{-1}[\Omega \cdot i(\Psi)] \subseteq \Phi)\\
\iff \forall \Psi \in Y \ (\Omega \subseteq S_\Psi \text{ and } i^{-1}[\Omega \cdot i(\Psi)] \subseteq \Phi)\\
\iff \forall \Psi \in Y \ (\{(\Omega, \Psi)\}  \subseteq \ov\Phi)\\
\iff \bigcup_{\Psi \in Y}\{(\Omega, \Psi)\}  \subseteq \ov\Phi.
\end{array}$$

Last, let us consider $\Omega \in \Sfmm$, $\Sigma \in \Sfm$, and $\Psi \in \wp D$. $\Omega \subseteq S_{\Sigma \cdot \Psi}$ if and only if $\Omega \cdot i(\Sigma \cdot \Psi) \in i[\wp D]$; on the other hand, $\Omega \cdot i(\Sigma \cdot \Psi) = (\Omega \cdot i(\Sigma))\cdot i(\Psi)$, whence $\Omega \in S_{\Sigma \cdot \Psi}$ if and only if $\Omega \cdot i(\Sigma) \in S_\Psi$. Moreover, the same equality $\Omega \cdot i(\Sigma \cdot \Psi) = (\Omega \cdot i(\Sigma))\cdot i(\Psi)$ implies that $i^{-1}[\Omega \cdot i(\Sigma \cdot \Psi)] \subseteq \Phi$ if and only if $i^{-1}[(\Omega \cdot i(\Sigma))\cdot i(\Psi)] \subseteq \Phi$, and therefore $\{\Omega, \Sigma \cdot \Psi)\} \subseteq \ov\Phi$ if and only if $\{(\Omega \cdot i(\Sigma), \Psi)\} \subseteq \ov\Phi$. This finally proves that $\ov\Phi$ is saturated in $\Sfmm \times \th$ w.r.t. the relation which determines the tensor product.

As a final step, we remark that the mapping $\Phi \mapsto \ov\Phi$ is obviously injective, so the assertion follows.
\end{proof}

\section{Amalgamating languages and modules of theories}
\label{amalgsec}

Let $\cat K$ be a class of algebras. An \emph{amalgam} (or a \emph{V-formation}) in $\cat K$ is a 5-tuple $(A, f, B, g, C)$, where $A, B, C \in \cat K$ and $f: A \lto B$, $g: A \lto C$ are injective homomorphisms. An amalgam $(A, f, B, g, C)$ is said to be \emph{embeddable} if there exist an object $D$ and two injective homomorphisms $f': B \lto D$ and $g': C \lto D$ such that $f' \circ f = g' \circ g$. It is \emph{strongly embeddable} if, in addition, $f'[B] \cap g'[C] = f'f[A] = g' g[A]$.

\begin{definition}\label{ama}
We say that a class of algebras $\cat K$ has the \emph{amalgamation property} (resp.: \emph{strong amalgamation property}) if all amalgams in $\cat K$ are embeddable (resp.: \emph{strongly embeddable}).
\end{definition}

The amalgamation property can be defined in the more general setting of category theory (see \cite{hofmis,tho}) in such a way that it basically reduces to Definition \ref{ama} in the case of algebraic categories. 

It was shown in \cite{krpa} that quantales do not enjoy the amalgamation property, but this is actually an obvious consequence of the failure of the amalgamation property for semigroups and monoids \cite{how,kim,ren}. Indeed, given a non-embeddable monoid amalgam $\mathfrak A = (A, f, B, g, C)$, applying the powerset functor we get a quantale amalgam $\wp\mathfrak A = (\wp A, \wp f, \wp B, \wp g, \wp C)$ whose embeddability would imply the embeddability of $\mathfrak A$ by applying the forgetful functor and restricting all the morphisms of $\wp\mathfrak A$ to the submonoids of singletons.

However, we shall see in the present section that monoids and quantales of substitutions of propositional languages do actually enjoy amalgamation.

Let $\lang_1 = (L_1, \nu_1)$ and $\lang_2 = (L_2, \nu_2)$ be propositional languages with a common fragment $\lang = (L_1 \cap L_2, \nu)$, where $\nu = {\nu_1}_{\restr L_1\cap L_2} = {\nu_2}_{\restr L_1 \cap L_2}$. Hence we have the amalgam of monoids
\begin{equation}\label{vmondiag}
\begin{tikzcd}
\sfmm & & \sfmmm \\
& \sfm \arrow[ul, ,"i_1"] \arrow[ur, "i_2"'] &\\
\end{tikzcd}
,\end{equation}
where, for any substitution $\sigma$ in $\lang$, $i_1(\sigma)$ and $i_2(\sigma)$ are the substitutions, in $\lang_1$ and $\lang_2$ respectively, such that $\sigma(x) = i_1(\sigma)(x) = i_2(\sigma)(x)$ for all $x \in \Var$.

\begin{proposition}\label{amalgmon}
The amalgam (\ref{vmondiag}) is strongly embeddable in $\sfmu$.
\end{proposition}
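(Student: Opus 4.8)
The plan is to carry out the amalgamation inside the substitution monoid $\sfmu$ of the union language $\lang_1 \cup \lang_2 = (L_1 \cup L_2, \nu_1 \cup \nu_2)$. The basic observation is that enlarging a signature only enlarges the absolutely free algebra on the fixed variable set $\Var$: the underlying sets satisfy $\fml \subseteq \fmll \subseteq \fmlu$ and $\fml \subseteq \fmlll \subseteq \fmlu$, each inclusion being that of a subalgebra for the smaller signature, so in particular $\fml \subseteq \fmll \cap \fmlll$. Define $j_1: \sfmm \to \sfmu$ by sending a substitution $\tau \in \sfmm$ -- equivalently, a map $\Var \to \fmll$ -- to the unique $(\lang_1 \cup \lang_2)$-endomorphism of $\fmlu$ that agrees with $\tau$ on $\Var$, and define $j_2: \sfmmm \to \sfmu$ symmetrically.

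First I would dispatch the routine points. Since $\fmll$ is closed under the $\lang_1$-operations in $\fmlu$ and $j_1(\tau)$ is an $\lang_1$-homomorphism coinciding with $\tau$ on variables, an easy term induction gives $j_1(\tau) \restr \fmll = \tau$; evaluating on a variable then yields $j_1(\tau \circ \tau') = j_1(\tau) \circ j_1(\tau')$ and $j_1(\id) = \id$, so $j_1$ (and likewise $j_2$) is a monoid homomorphism, injective because a substitution is determined by its action on $\Var$. Moreover both $j_1 \circ i_1$ and $j_2 \circ i_2$ send $\sigma \in \sfm$ to the $(\lang_1 \cup \lang_2)$-endomorphism of $\fmlu$ extending $\sigma : \Var \to \fml \subseteq \fmlu$, so the square commutes; this already gives embeddability of (\ref{vmondiag}) in $\sfmu$, with $j_1 i_1[\sfm] = j_2 i_2[\sfm]$.

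The heart of the matter is the strong amalgamation equality $j_1[\sfmm] \cap j_2[\sfmmm] = j_1 i_1[\sfm]$, whose inclusion $\supseteq$ is the commutativity just noted. For $\subseteq$, take $\rho = j_1(\tau) = j_2(\tau')$; then for every $x \in \Var$ we have $\rho(x) = \tau(x) \in \fmll$ and $\rho(x) = \tau'(x) \in \fmlll$, hence $\rho(x) \in \fmll \cap \fmlll$. I would then prove the key lemma $\fmll \cap \fmlll = \fml$ (inside $\fmlu$) by induction on the structure of a $(\lang_1 \cup \lang_2)$-term, using unique readability in the absolutely free algebra $\fmlu$: a term lying in both $\fmll$ and $\fmlll$ is either a variable, or has a top operation symbol that must simultaneously lie in $L_1$ and in $L_2$ -- hence in $L_1 \cap L_2$ -- with all immediate subterms again lying in $\fmll \cap \fmlll$, whence by the inductive hypothesis the whole term uses only symbols of $L_1 \cap L_2$ and thus lies in $\fml$. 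With this, $\rho$ maps $\Var$ into $\fml$, so $\rho = j_1(i_1(\sigma))$ for the substitution $\sigma \in \sfm$ defined by $\sigma(x) = \rho(x)$, and the proof is complete.

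The only step requiring genuine care is the lemma $\fmll \cap \fmlll = \fml$: one must regard $\fml$, $\fmll$, $\fmlll$ as literal subalgebras of $\fmlu$ cut out by their signatures and phrase the induction so that variables, nullary symbols, and the unique-readability step are handled uniformly -- the compatibility of the arities, $\nu = {\nu_1}_{\restr L_1 \cap L_2} = {\nu_2}_{\restr L_1 \cap L_2}$, being exactly what makes $L_1 \cap L_2$ a well-defined common signature. Everything else is bookkeeping with freely generated endomorphism monoids.
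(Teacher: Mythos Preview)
Your proof is correct and follows essentially the same route as the paper: define $j_1,j_2$ by extending substitutions along the free-algebra inclusions, check they are injective monoid morphisms making the square commute, and reduce the strong amalgamation clause to $\fmll \cap \fmlll = \fml$. The only difference is one of detail: the paper simply asserts $\fml = \fmll \cap \fmlll$ at the outset, while you spell out the structural-induction argument for it; otherwise the two proofs coincide.
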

\begin{proof}
First of all, let us recall that $\fml = \fmll \cap \fmlll$, $\fmll \subseteq \fmlu$, and $\fmlll \subseteq \fmlu$.

Let  $j_1: \sfmm \to \sfmu$ and $j_2: \sfmmm \to \sfmu$ be the maps defined as follows: for all $\sigma \in \sfmm$ and $\sigma' \in \sfmmm$, $j_1(\sigma)$ and $j_2(\sigma')$ are the unique substitutions in $\lang_1 \cup \lang_2$ such that $j_1(\sigma)(x) = \sigma(x)$ and $j_2(\sigma')(x) = \sigma'(x)$ for every $x \in \Var$. Both $j_1$ and $j_2$ are obviously one-to-one monoid homomorphisms. Moreover, for all $\sigma \in \sfmm$ and $\sigma' \in \sfmmm$, $j_1(\sigma) = j_2(\sigma')$ iff $\sigma_{\restr\Var} = \sigma_{\restr\Var}'$ iff $\sigma = \sigma' \in \sfm$, whence $j_1 \circ i_1 = j_2 \circ i_2$ and $j_1[\sfmm] \cap j_2[\sfmmm] = j_1 i_1[\sfm] = j_2 i_2[\sfm]$. 
\end{proof}

\begin{corollary}\label{amalgquant}
The quantale $\Sfmu$ is the strong amalgamated coproduct of $\Sfmm$ and $\Sfmmm$ w.r.t. $\Sfm$.
\end{corollary}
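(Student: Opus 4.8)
The plan is to derive Corollary~\ref{amalgquant} from Proposition~\ref{amalgmon} by applying the free-quantale-over-a-monoid functor, exploiting the fact that this functor is a left adjoint (to the forgetful functor $\cat Q \to \mathbf{Mon}$) and hence preserves colimits, while a separate argument handles the \emph{strongness} of the amalgamation. First I would recall that $\Sfm$, $\Sfmm$, $\Sfmmm$, and $\Sfmu$ are by definition the free quantales on the substitution monoids $\sfm$, $\sfmm$, $\sfmmm$, and $\sfmu$ respectively, and that the monoid embeddings $i_1, i_2, j_1, j_2$ of diagram~(\ref{vmondiag}) and Proposition~\ref{amalgmon} induce quantale homomorphisms $\P(i_1), \P(i_2), \P(j_1), \P(j_2)$ between the corresponding free quantales; concretely these are the direct-image maps $\wp(i_1), \wp(i_2)$, etc., since the free quantale on a monoid $\mathcal{N}$ is $\wp\mathcal{N}$ with pointwise product. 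Since $\sfmu$ is the pushout (amalgamated coproduct) of $\sfmm$ and $\sfmmm$ over $\sfm$ in $\mathbf{Mon}$ — this is exactly the content of the common-fragment description of $\sfmu$, together with Proposition~\ref{amalgmon} which shows the pushout cocone consists of embeddings — applying the left adjoint yields that $\Sfmu$ together with $\wp(j_1), \wp(j_2)$ is the pushout of $\Sfmm$ and $\Sfmmm$ over $\Sfm$ in $\cat Q$, i.e. the amalgamated coproduct.

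Next I would verify that the induced cocone maps $\wp(j_1)\colon \Sfmm \to \Sfmu$ and $\wp(j_2)\colon \Sfmmm \to \Sfmu$ are injective quantale homomorphisms and that $\wp(j_1)[\Sfmm] \cap \wp(j_2)[\Sfmmm] = \wp(j_1 i_1)[\Sfm]$, which is what ``strong amalgamated coproduct'' additionally requires. Injectivity is immediate: $j_1, j_2$ are injective maps of sets, so their direct-image maps on powersets are injective. For the intersection condition, a subset $X \subseteq \sfmu$ lies in the image of $\wp(j_1)$ precisely when $X \subseteq j_1[\sfmm]$, and likewise for $\wp(j_2)$; hence $X$ lies in both images iff $X \subseteq j_1[\sfmm] \cap j_2[\sfmmm]$, and by Proposition~\ref{amalgmon} the latter intersection equals $j_1 i_1[\sfm] = j_2 i_2[\sfm]$. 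Therefore $X \subseteq j_1 i_1[\sfm]$, which says exactly $X \in \wp(j_1 i_1)[\Sfm]$; the reverse inclusion is trivial. This establishes the strong embedding of the quantale amalgam.

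The main subtlety — and the step I would treat most carefully — is the identification of $\sfmu$ as the monoid pushout of $\sfmm \leftarrow \sfm \to \sfmmm$, rather than merely as a monoid in which the amalgam embeds strongly. Proposition~\ref{amalgmon} as stated only gives strong embeddability into $\sfmu$, not the universal property; however, the description of $\sfmu = \Sigma_{\cat{L}_1\cup\cat{L}_2}$ as the substitution monoid of the union language, whose endomorphisms are determined freely by arbitrary assignments of variables to $(\cat L_1\cup\cat L_2)$-formulas, does yield the pushout universal property once one checks that a pair of monoid homomorphisms out of $\sfmm$ and $\sfmmm$ agreeing on $\sfm$ glues uniquely — and this in turn rests on $\fmll \cap \fmlll = \fml$ and the fact that $(\cat L_1 \cup \cat L_2)$-formulas are built from $\cat L_1$-formulas and $\cat L_2$-formulas with the shared sub-syntax being exactly the $\cat L$-formulas. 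If one prefers to avoid invoking the general adjoint-preserves-colimits principle, an equivalent route is to verify the pushout universal property of $(\Sfmu, \wp(j_1), \wp(j_2))$ in $\cat Q$ directly: given a quantale $R$ and homomorphisms $g_1\colon \Sfmm \to R$, $g_2\colon \Sfmmm \to R$ with $g_1 \wp(i_1) = g_2 \wp(i_2)$, restrict to singletons to obtain monoid homomorphisms $\sfmm \to R$ and $\sfmmm \to R$ agreeing on $\sfm$, use the monoid pushout property of $\sfmu$ to get a unique monoid homomorphism $\sfmu \to R$, and then extend it to a quantale homomorphism $\Sfmu = \wp\sfmu \to R$ by joins, checking well-definedness and uniqueness; this is routine given that every element of $\wp\sfmu$ is a join of singletons. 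Either way, the corollary follows.
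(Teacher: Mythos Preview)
The paper gives no proof, treating the corollary as immediate from Proposition~\ref{amalgmon}. Your second paragraph --- lifting the strong monoid embedding to the powerset quantales via direct-image maps, and reading off injectivity and the intersection condition from the corresponding facts for $j_1,j_2$ --- is correct and is exactly the content that follows directly from the proposition. Read as the assertion that the quantale amalgam embeds strongly in $\Sfmu$, this is a complete proof and matches the paper's implicit argument.

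Where your proposal overreaches is in trying to establish the pushout universal property. You assert that $\sfmu$ is the monoid pushout of $\sfmm \leftarrow \sfm \to \sfmmm$, acknowledging this as the ``main subtlety'' but claiming it can be checked from the free description of $\fmlu$. In fact it fails already at the level of generation: $\sfmu$ is not generated as a monoid by $j_1[\sfmm] \cup j_2[\sfmmm]$. Take $\lang$ empty, $\lang_1 = \{f\}$, $\lang_2 = \{g\}$ with $f,g$ unary, and let $\sigma \in \sfmu$ send $x_n \mapsto (fg)^n(x_1)$. Applying an element of $j_1[\sfmm]$ or $j_2[\sfmmm]$ to a formula replaces its variables by formulas from a single sublanguage, so a $k$-fold composite $\rho_k\circ\cdots\circ\rho_1$ produces, for every variable, a formula whose alternation depth (number of switches between $\lang_1$- and $\lang_2$-connectives along any root-to-leaf path) is at most $k-1$; but $\sigma(x_n)$ has alternation depth $2n-1$, unbounded in $n$. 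Hence $\sigma$ is not a finite composite, $\sfmu$ is not the monoid pushout, and --- since $\wp$ preserves colimits --- $\Sfmu$ is not the quantale pushout of $\Sfmm$ and $\Sfmmm$ over $\Sfm$ either. Both your ``left adjoint preserves colimits'' route and your direct-verification route rest on the monoid pushout claim, so neither goes through. If the corollary is to be read literally as a pushout statement it needs a different argument than the one you sketch; the paper itself supplies none, and its phrasing is most charitably read as asserting only the strong embedding you correctly establish.
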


The strong amalgamation property was proved to hold for quantale modules by Nkuimi-Jugnia \cite{nku}; eventually, two easier proofs were presented independently \cite{russothesis,solo}. We refer to any of the cited works for a detailed proof of the property, but we want to recall here how the amalgam embeddings occur (in the case of $Q\Mod$, as usual).

Let $M$, $N$ and $P$ be $Q$-modules, and $f: P \to M$ and $g: P \to N$ be two injective homomorphisms. Then the amalgamating object is the quotient of the coproduct $M \times N$ w.r.t. the $Q$-module congruence $\vartheta$ generated by the set $\{((f(w),\bot_N),(\bot_M,g(w))) \mid w \in P\}$. The associated embeddings of $M$ and $N$ are defined as follows:
$$f': v \in M \lmapsto (v,\bot)/\vartheta \in A \quad \textrm{ and } \quad g': w \in N \lmapsto (\bot, w)/\vartheta \in A.$$
As observed in \cite{russothesis}, $\vartheta$ can be described as follows:
\begin{equation}\label{simama}
(u,v) \vartheta (u',v') \quad \iff \quad \exists w, w' \in P: \ \left\{\begin{array}{l} f(w) = u \\ f(w') = u' \\ g(w) = v' \\ g(w') = v\end{array}\right..
\end{equation}
We also have sufficient information for characterizing the $\vartheta$-saturated elements of $M \times N$.

\begin{proposition}\label{satamalg}
With the notations introduced above, for all $(u,v) \in M \times N$, $(u,v)$ is $\vartheta$-saturated if and only if $[\bot_M,u] \cap f[P] = [\bot_N,v] \cap g[P].$
\end{proposition}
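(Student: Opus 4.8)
The plan is to read off Definition \ref{satelm} in the coproduct module $M \times N$. Since coproduct and product of $Q$-modules coincide and carry the componentwise order, scalar multiplication and bottom element $(\bot_M,\bot_N)$, I would first reduce the saturation test to the generating pairs of $\vartheta$: writing $G = \{((f(w),\bot_N),(\bot_M,g(w))) \mid w \in P\}$ for the generating set, the argument in the proof of Theorem \ref{satquom} shows that the set of saturated elements of a module depends on the chosen relation only through the congruence it generates, so that the $\vartheta$-saturated elements of $M \times N$ are precisely the $G$-saturated ones. Spelled out for a pair $(u,v)$ and computing componentwise, this says: for every $w \in P$ and every $a \in Q$,
\[
a f(w) \le u \iff a g(w) \le v .
\]

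The heart of the argument is that this family of biconditionals is equivalent to its instance $a = 1$, namely that $f(p) \le u \iff g(p) \le v$ for all $p \in P$. One implication is immediate (take $a = 1$, which exists since $Q$ is unital); for the converse I would use that $f$ and $g$ are $Q$-module morphisms, so $a f(w) = f(aw)$ and $a g(w) = g(aw)$ with $aw \in P$, and then apply the $a = 1$ condition at the point $aw$. Finally, the condition ``$f(p) \le u \iff g(p) \le v$ for all $p \in P$'' is exactly the assertion that the sets $\{p \in P \mid f(p) \le u\}$ and $\{p \in P \mid g(p) \le v\}$ coincide, i.e.\ that $[\bot_M,u] \cap f[P] = [\bot_N,v] \cap g[P]$ under the identification of $f[P]$ with $g[P]$ via $g \circ f^{-1}$ (legitimate because $f$ and $g$ are injective). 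Both directions of the proposition follow.

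I expect the only delicate points to be (i) the passage from the congruence $\vartheta$ to its generating set $G$ when computing saturated elements — which is precisely what the proof of Theorem \ref{satquom} provides, and which could alternatively be handled by working directly from the explicit description (\ref{simama}) of $\vartheta$ and specialising one of its two parameters in $P$ to $\bot_P$ — and (ii) making sure the two-sided set equality in the statement is read modulo the canonical identification of $f[P]$ and $g[P]$. With these settled, the rest is a routine componentwise computation.
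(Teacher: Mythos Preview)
Your proposal is correct and follows essentially the same route as the paper: both reduce the $\vartheta$-saturation condition to the generating pairs $((f(w),\bot_N),(\bot_M,g(w)))$, use that $f,g$ are $Q$-module morphisms to rewrite $af(w)\le u \iff ag(w)\le v$ as $f(aw)\le u \iff g(aw)\le v$, and conclude the equality $\{p\in P\mid f(p)\le u\}=\{p\in P\mid g(p)\le v\}$. Your points (i) and (ii) are exactly the two implicit identifications the paper makes without comment, so your write-up is in fact slightly more careful.
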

\begin{proof}
In the present case, condition (\ref{sateq}) reads as follows: for all $a \in Q$ and for all $w \in P$, $a(f(w),\bot_N) \leq (u,v) \iff a(\bot_M, g(w)) \leq (u,v)$. Such an equivalence holds if and only if $\forall a \in Q \ \forall w \in P \ (f(aw) \leq u \iff g(aw) \leq v)$, i.~e., if and only if
$$\{w \in P \mid f(w) \leq u\} = \{w \in P \mid g(w) \leq v\},$$
which is exactly what we wanted to prove.
\end{proof}

Since sup-lattices can be identified with modules over the quantale structure associated to the Boolean algebra $\{0,1\}$ (with the obvious scalar multiplication) along with their morphisms, from the amalgamation property of quantale modules readily follows that sup-lattices enjoy the strong amalgamation property too.

Next, we will show how the results of the previous sections and from \cite{russoapal,russosajl} concretely apply to the problem of combining two different deductive systems in a single one. Standard methods for merging different deductive systems may be useful in various applications in automated reasoning, such as automated theorem provers or decision-making processes. In this section we shall simply discuss the standard construction of an amalgamating module in the special case of a V-formation of deductive systems. In the next sections we will propose alternative yet more ``concrete'' (as far as Abstract Logic can be concrete) constructions and we shall compare them with the standard one.

Let us consider three deductive systems $(D, \vdash)$, $(D_1, \vdash_1)$, and $(D_2, \vdash_2)$ over the same language $\lang$ -- whose corresponding $\Sfm$-modules of theories shall be denoted, respectively, by $\th$, $\th_1$, and $\th_2$ -- such that there exist $\Sfm$-module embeddings $r_1: \th \to \th_1$ and $r_2: \th \to \th_2$. Let also $\g$, $\g_1$, and $\g_2$ be the nuclei associated, respectively, to the three systems. We recall that, according to Remark \ref{nuclrem}, we shall use the same names for the morphisms obtained from such maps by restricting the codomains to the respective images. 

In such a situation there exist a $\Sfm$-module $M$ and embeddings $n_1: \th_1 \to M$ and $n_2: \th_2 \to M$ such that $n_1 \circ r_1 = n_2 \circ r_2$ and $n_1[\th_1] \cap n_2[\th_2] = n_1 r_1(\th) = n_2 r_2(\th)$. Moreover, the projectivity of the modules $\wp D, \wp D_1, \wp D_2$, and $\wp D_1 \amalg \wp D_2$ (see Theorem \ref{kappax}) guarantees the existence of morphisms $f_1$ and $f_2$ which make the following diagram commute ($g_1$ and $g_2$ are the canonical embeddings). 
\begin{equation}\label{superdiag}
\begin{tikzcd}
 & \wp D_1 \amalg \wp D_2  \arrow[dd, "g_2", near end, hookleftarrow] \arrow[rr, two heads, "\delta"', swap] &  & M & \\
\wp D_1 \arrow[rr, two heads, "\g_1", crossing over,  near end] \arrow[ur, hook, "g_1"] &  & \th_1 \arrow[ur, hook, "n_1", swap] &  & \\ 
& \wp D_2  \arrow[rr, two heads, "\g_2", swap, near start] &&\th_2 \arrow[uu, hook, "n_2", swap] &\\
  \wp D \arrow[rr, two heads, "\g", swap] \arrow[uu, "f_1"] \arrow[ur, "f_2"] &  &  \th \arrow[ur, hook, "r_1", swap] \arrow[uu, crossing over, near start, hook, "r_2", swap] &  \\
\end{tikzcd}
\end{equation}

The commutativity of the diagram above is just a direct application of the categorical and algebraic machinery of \cite{galtsi,russoapal}; according to the construction of the amalgamating module, $M$ is the lattice of theories of a deductive system over a domain of two-sorted syntactic constructs.

More precisely, let us think of $D, D_1$ and $D_2$ as sets of sequents closed under the types $T, T_1, T_2 \subseteq \omega^2$ respectively.\footnote{We recall that $\fml$ and $\Eq_\lang$ can be thought of as the sets of, respectively, $\{(0,1)\}$-sequents and $\{(1,1)\}$-sequents.} Then $M$ will be the module of theories of a system defined on $\wp D_1 \times \wp D_2$, i.~e., on pairs made of a set of $T_1$-sequents and one of $T_2$-sequents. According to (\ref{simama}), two of such pairs -- say $(\Phi_1, \Phi_2)$ and $(\Psi_1,\Psi_2)$ generate the same theory in $M$ if and only if there exist $\Xi$ and $\Xi'$ in $\wp D$ such that $r_1 \g(\Xi) = \g_1(\Phi_1)$, $r_2 \g(\Xi) = \g_1(\Psi_2)$, $r_1 \g(\Xi') = \g_1(\Psi_1)$, and $r_2 \g(\Xi') = \g_1(\Phi_2)$.

A system like that may not look very ``concrete'', in the sense that consequence relations on pairs of sets of sequents are not common at all. However, both the positive and negative results of the theory must be interpreted, in our opinion, as a road map toward the concrete solutions to several problems pointing out the right directions, possible obstacles, and blind alleys. For example, the amalgamation property for quantale modules, and the way it works, guarantees that it is possible to merge deductive systems and that the result is something which is not really far from a logical system in the classical meaning. On the other hand, it is a general construction encompassing any module amalgam on any quantale, and therefore it makes sense to seek for alternative yet equally good constructions based on the specific case of deductive systems. 

In the next sections we shall prove that more handy constructions are indeed possible, and quantale modules continue playing an extremely relevant role in proving the good properties of such alternative amalgamations.

\section{Logical coproducts of deductive systems}
\label{coproddssame}

In the present section we shall describe how to build a deductive system which includes two given ones of the same type without assuming the possibility of a common subsystem. Here we will define the new system in a pretty classical way, namely, by means of axioms and inference rules, and we will use quantales and modules only as tools for proving that the result of the construction has the most desirable properties.

More in details, we shall first consider the disjoint union of the languages of the two initial systems and the domain of the same syntactic constructs of such systems in the new language, then we will extend the two consequence relations to such a domain, and finally we shall define a new logic by means of all of the axioms and rules of the two given ones. Once the new deductive system will be defined, we will prove that the two initial consequence relations are fully represented inside the new one. Moreover, we will also show that each consequence relation is representable in its corresponding extension and that each of the domains of the initial systems, once embedded in the new domain, remains totally untouched by the extension of the consequence relation of the other one. Last, we will show that each extension of the two initial systems is isomorphic to the one obtained by extending the scalars using the tensor product, as in Section \ref{tensor}.

Let $(D_1, \vdash_1)$ and $(D_2, \vdash_2)$ be two deductive systems of the same type over the languages $\lang_1$ and $\lang_2$ respectively, with associated nuclei $\g_1$ and $\g_2$ and modules of theories $\th_1 = (\wp D_1)_{\g_1}$ and $\th_2 = (\wp D_2)_{\g_2}$ respectively, and let $\lang$ be the disjoint union of $\lang_1$ and $\lang_2$. Then $\Sfm$ is the coproduct of the quantales $\Sfmm$ and $\Sfmmm$ and, consequently, every $\Sfm$-module is also a $\Sfmm$- and a $\Sfmmm$-module. Let us consider the $\Sfm$-module $\wp E$ of the same syntactic constructs of the two given systems; clearly $\wp D_1$ and $\wp D_2$ are contained in $\wp E$ -- let us denote by $d_1$ and $d_2$ the respective inclusion maps -- and we can define the ${\Sfm}_i$-module nuclei
$$\g_i': \Phi \in \wp E \mapsto \g_i(\Phi \cap D_i) \cup (\Phi \setminus D_i) \in \wp E, \quad i = 1,2.$$
Now consider the following $\Sfm$-nuclei on $\wp E$, for $i = 1,2$:
\begin{enumerate}[(i)]
\item $\delta_i$ is the nucleus associated to the consequence relation $\vdash_{\delta_i}$ on $E$ defined by means of the axioms and rules of $\vdash_i$;
\item $\delta = \delta_1 \vee \delta_2 = \bigwedge\{\rho \in \cat{N}_{\Sfm}(E) \mid \delta_1 \leq \rho \text{ and } \delta_2 \leq \rho\}$.
\end{enumerate}
Before continuing, let us remark some facts about such nuclei and their corresponding consequence relations. First of all, it is worth noticing that the infimum of nuclei is nothing else than the pointwise intersection in this case.

For each $i$, $\delta_i = \bigwedge \{\rho \in \cat{N}_{\Sfm}(E) \mid \ \forall \Phi \in \wp E (\rho(\Phi) \supseteq \g_i'(\Phi))\}$, namely, it is the smallest nucleus on $\wp E$ for which $\g_i'(\Phi) \subseteq \delta_i (\Phi)$ for all $\Phi \in \wp D_i$. 

Last, the consequence relation corresponding to $\delta$ can be easily described as the one defined by the union of the axioms and rules of $\g_1$ and $\g_2$.

From now on, in this section, the sets of axioms of $\g_1$ and $\g_2$ will be denoted by $\ax_1$ and $\ax_2$ respectively. Moreover, for $i=1,2$, $\th_{\delta_i}$ shall denote the $\Sfm$-module of theories of $\delta_i$, and $\th$ the one of $\delta$.

The following lemma, which plays a fundamental role in most of the results of both this and next sections, basically asserts that the consequence relation $\vdash_{\d_i}$ acts trivially on $\wp D_k$, for $i \neq k$. 

\begin{lemma}\label{deltapsi}
Let $i \neq k \in \{1,2\}$. Then, for all $\Phi \cup \{\psi\} \in \wp D_k$, $\psi \in \delta_i(\Phi)$ if and only if $\psi \in \Phi$.
\end{lemma}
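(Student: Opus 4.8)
The statement concerns $\delta_i$, the nucleus on $\wp E$ that is the least $\Sfm$-module nucleus whose closure dominates $\g_i'$ on all of $\wp E$ (equivalently, the one generated by the closure conditions of $\vdash_i$ transported to the big domain $E$). We want to show $\delta_i$ "sees nothing" in $D_k$ for $k \neq i$: if $\Phi \cup \{\psi\} \subseteq D_k$ then $\psi \in \delta_i(\Phi)$ forces $\psi \in \Phi$. The natural strategy is to exhibit a concrete $\Sfm$-module nucleus $\rho$ on $\wp E$ which (a) dominates $\g_i'$ everywhere, so that $\delta_i \leq \rho$ by minimality, and (b) satisfies $\rho(\Phi) \cap D_k = \Phi \cap D_k$ whenever $\Phi \subseteq D_k$ — in fact it is cleanest to pick $\rho$ with $\rho(\Phi) \cap D_k \subseteq \Phi$ for every $\Phi \in \wp E$. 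Then $\psi \in \delta_i(\Phi) \subseteq \rho(\Phi)$ together with $\psi \in D_k$ gives $\psi \in \Phi$, and the converse implication ($\psi \in \Phi \Rightarrow \psi \in \delta_i(\Phi)$) is just extensivity of a closure operator.

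The candidate for $\rho$ is essentially $\g_i'$ itself, or a mild closure of it: define $\rho(\Phi) = \g_i(\Phi \cap D_i) \cup (\Phi \setminus D_i)$. One must first check that $\g_i'$ really is an $\Sfm$-module nucleus on $\wp E$ — this is asserted in the text (it is called an ${\Sfm}_i$-module nucleus, but since $\Sfm$ is the coproduct $\Sfmm \amalg \Sfmmm$ and $\wp E$ is an $\Sfm$-module, one needs the scalar-multiplication inequality $a\,\g_i'(\Phi) \subseteq \g_i'(a\Phi)$ for all $a \in \Sfm$, not merely $a \in \Sfmm$). Here is where the disjointness of the languages $\lang_1, \lang_2$ does the real work: a substitution built from symbols of both languages, applied to a formula/equation/sequent of $D_k$, lands in a syntactic object whose outermost structure (the part that would need to be identified with something in $D_i$) comes from $\lang_k$, so it cannot fall into $D_i$ unless it was "already there". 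Concretely, $\Var$ is shared but $D_i \cap D_k$ consists only of variables (the purely-variable formulas/equations/sequents), and a generating substitution $\kappa$ from Theorem~\ref{kappax} that mixes the languages sends a non-variable generator outside both $D_1$ and $D_2$. Making this precise — that $(\sigma \Phi) \cap D_i$, for $\Phi \subseteq D_k$, reduces to $\sigma$ acting by relabelling variables inside $\Phi \cap (\text{variables})$, hence stays inside $\sigma[\Phi]$ after intersecting back with $D_i$ — is the crux.

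Granting that $\rho = \g_i'$ is an $\Sfm$-module nucleus, everything else is immediate: $\rho$ dominates $\g_i'$ trivially (it equals it), so $\delta_i \leq \rho$; and for $\Phi \subseteq D_k$ we have $\Phi \cap D_i \subseteq \Var$, so $\g_i(\Phi \cap D_i) = \Phi \cap D_i$ by non-triviality (Remark~\ref{nontrivial} gives $\g_i(\{x\}) = \{x\} \cup \g_i(\varnothing)$, and with $\Phi$ a set of variables, the structural/deductive closure adds nothing new inside $D_i \cap D_k = \Var$), whence $\rho(\Phi) = (\Phi \cap D_i) \cup (\Phi \setminus D_i) = \Phi$ — in particular $\rho(\Phi) \cap D_k = \Phi$. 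Thus $\psi \in \delta_i(\Phi) \subseteq \rho(\Phi) = \Phi$.

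**Main obstacle.** The genuine difficulty is not the abstract minimality argument but verifying that $\g_i'$ is a bona fide $\Sfm$-module nucleus, i.e. that it is invariant enough under substitutions from the \emph{other} language — this is precisely the point where one must use that $\lang$ is the \emph{disjoint} union of $\lang_1$ and $\lang_2$ and the explicit description of the generating substitutions in Theorem~\ref{kappax}. A secondary subtlety is the interaction with $\g_i(\varnothing)$: if the empty theory of $\vdash_i$ contains no variables (which non-triviality guarantees: $\g_i(\varnothing) \neq D_i$, and in the well-behaved setting $\g_i(\varnothing) \cap \Var = \varnothing$), then the computation $\g_i(\Phi \cap D_i) = \Phi \cap D_i$ for $\Phi \cap D_i \subseteq \Var$ goes through cleanly; one should make sure the hypotheses in force rule out $\g_i(\varnothing)$ meeting $\Var$, or else carry the harmless extra term $\g_i(\varnothing)$ along (it lies outside $D_k$ once the languages are disjoint, so it still does not pollute $\rho(\Phi) \cap D_k$).
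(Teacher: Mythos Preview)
Your overall strategy --- exhibit a concrete $\Sfm$-module nucleus $\rho$ on $\wp E$ that dominates $\g_i'$ and satisfies $\rho(\Phi)\cap D_k=\Phi$ for $\Phi\subseteq D_k$, then invoke $\delta_i\leq\rho$ --- is different from the paper's proof, which argues directly with derivations (axioms cannot land in $D_k$; a rule instance with conclusion in $D_k$ forces the relevant premises to be variables, and non-triviality then pins the conclusion inside $\Phi$). Your route is conceptually clean, but it breaks at the step you yourself flag as the ``main obstacle'': the candidate $\rho=\g_i'$ is \emph{not} an $\Sfm$-module nucleus, and no ``mild closure'' short of $\delta_i$ itself will repair this while keeping property~(b).

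Here is the concrete failure. Take $\Phi=\varnothing$, so $\g_i'(\varnothing)=\g_i(\varnothing)\subseteq D_i$. Let $\theta\in\g_i(\varnothing)$ be any theorem of $\vdash_i$ containing a variable $p$ (non-triviality guarantees $\theta$ is not itself a variable), and let $\sigma\in\sfm$ send $p$ to some $\alpha\in D_k\setminus\Var$. Then $\sigma(\theta)$ has an $\lang_i$-connective at the root but an $\lang_k$-connective inside, so $\sigma(\theta)\notin D_i$; hence $\sigma(\theta)\notin\g_i(\varnothing)=\g_i'(\sigma[\varnothing])$, while $\sigma(\theta)\in\sigma[\g_i'(\varnothing)]$. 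Thus the structurality inequality $\sigma\cdot\g_i'(\Phi)\subseteq\g_i'(\sigma\cdot\Phi)$ fails already at $\Phi=\varnothing$. Your discussion of $(\sigma\Phi)\cap D_i$ for $\Phi\subseteq D_k$ is beside the point: the nucleus condition must hold for \emph{every} $\Phi\in\wp E$, not only those contained in $D_k$, and the obstruction lives in the part of $\g_i'(\Phi)$ coming from $D_i$ (the theorems), not in $\Phi$ itself. A natural alternative, $\rho(\Phi)=(E\setminus D_k)\cup(\Phi\cap D_k)$, \emph{is} an $\Sfm$-module nucleus with the right trace on $D_k$, but it fails to dominate $\g_i'$: if $\Phi\subseteq D_i$ and some variable $x$ lies in $\g_i(\Phi)\setminus\Phi$ (e.g.\ $\Phi=\{p\wedge q\}$ in a system with conjunction elimination), then $x\in\g_i'(\Phi)\cap D_k$ but $x\notin\rho(\Phi)$. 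In short, any $\Sfm$-nucleus large enough to dominate $\g_i'$ must already absorb the interaction of rules with variables, which is exactly the content of the lemma; the minimality shortcut does not bypass the proof-theoretic analysis the paper carries out.
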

\begin{proof}
The right-to-left implication is trivial. Recalling that $\psi \in \delta_i(\Phi)$ if and only if $\Phi \vdash_{\delta_i} \psi$, by Definition \ref{consrel} we have that $\psi \in \delta_i(\Phi)$ if and only if $\psi$ satisfies any of the following conditions:
\begin{itemize}
\item $\psi$ is a theorem of $\vdash_{\delta_i}$,
\item $\psi$ is derivable from $\Phi$ and the axioms of $\vdash_{\delta_i}$ via inference rules of the system, or
\item $\psi \in \Phi$.
\end{itemize}
It is easy to see that an element of $D_k$ can be obtained by applying a substitution to some element of $E$ only if the latter is itself in $D_k$. For this reason, no element of $D_k$ can be a theorem of $\vdash_{\delta_i}$.

On the other hand, an element $\eta$ of $E$ can be inferred from $\Phi$ via a rule $\frac{\Xi}{\chi}$ if and only if there exists a substitution $\sigma$ such that $\sigma[\Xi] \subseteq \Phi \cup \ax_i$ (with $\sigma[\Xi] \cap \Phi \neq \varnothing$, in order to exclude the previous case) and $\sigma(\chi) = \eta$. On its turn, this implies that the nonempty subset $\sigma^{-1}[\sigma[\Xi] \setminus \ax_i]$ of $\Xi$ is contained in $D_k$ and, therefore, in $D_i \cap D_k = \Var$, because only elements of $D_i$ appear in the rules of $\vdash_{\delta_i}$. Then, by the hypothesis of non-trivialness of the systems $(D_1, \g_1)$ and $(D_2, \g_2)$ (see Remark \ref{nontrivial}), $\chi$ must necessarily be one of the variables belonging to $\sigma^{-1}[\sigma[\Xi] \setminus \ax_i]$, whence $\eta \in \Phi$. It follows that nothing but the theorems of $\vdash_{\delta_i}$ and the elements of $\Phi$ themselves are derivable from $\Phi$ and $\ax_i$.

Therefore $\psi \in \delta_i(\Phi)$ implies $\psi \in \Phi$, quod erat demonstrandum.
\end{proof}

Now, recalling that the $\Sfm$-module of theories $\th = \wp E_\delta$ becomes a $\Sfmm$-module and a $\Sfmmm$-module under the functors induced by the canonical embeddings of, respectively, $\Sfmm$ and $\Sfmmm$ into $\Sfm$, we get the following

\begin{proposition}\label{gammaiembedlem}
There exist ${\Sfm}_i$-module embeddings of $\th_i$ and ${\Sfm}_k$-module embeddings of $\wp D_k$ into $\th_{\delta_i}$, $i, k \in \{1,2\}$ with $i \neq k$.
\end{proposition}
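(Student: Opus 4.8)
The plan is to build the two required embeddings explicitly and then verify that each is indeed an order-embedding of $\Sfm_i$- (resp.\ $\Sfm_k$-) modules, using Lemma~\ref{deltapsi} as the crucial tool that makes everything collapse to the ``right'' side. First I would observe that $\wp D_i$ sits inside $\wp E$ via $d_i$, and that the nucleus $\d_i$ on $\wp E$ restricts well along this inclusion: the map $\Phi \in \th_{\d_i} \mapsto \Phi \cap D_i \in \wp D_i$ should be the inverse, on theories, of $\Phi \in \wp D_i \mapsto \d_i(d_i(\Phi))$. The composite $\wp D_i \xrightarrow{d_i} \wp E \xrightarrow{\d_i} \th_{\d_i}$ is an $\Sfm_i$-module homomorphism (since $d_i$ is one and $\d_i$ is an $\Sfm$-module nucleus, a fortiori $\Sfm_i$-linear); then I would pass to the $\g_i$-closed subsets, i.e.\ restrict the source to $\th_i = (\wp D_i)_{\g_i}$, to obtain the candidate embedding $\iota_i: \th_i \to \th_{\d_i}$, $\Phi \mapsto \d_i(\Phi)$. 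Because $\d_i$ dominates $\g_i'$ (by the displayed characterization of $\d_i$ as the least nucleus above $\g_i'$), and $\g_i'$ acts as $\g_i$ on subsets of $D_i$, I expect $\d_i(\Phi) \cap D_i = \g_i(\Phi) = \Phi$ for $\Phi \in \th_i$; this is exactly where injectivity and order-reflection come from, and this identity is really a consequence of Lemma~\ref{deltapsi}, which says $\d_i$ adds nothing living in $D_k$ and, by the same token, nothing to a $D_i$-subset beyond what $\g_i$ already adds.

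Next I would treat the second embedding, $\wp D_k \to \th_{\d_i}$ for $i \neq k$. Here the point is that $\wp D_k$ carries no $\g$-structure — it is the full powerset, i.e.\ its own module of theories under the trivial nucleus — and Lemma~\ref{deltapsi} tells us precisely that $\d_i$ restricted to $\wp D_k$ \emph{is} the trivial nucleus: for $\Phi \in \wp D_k$, $\d_i(d_k(\Phi)) \cap D_k = \Phi$. So the map $j_k: \Phi \in \wp D_k \mapsto \d_i(d_k(\Phi)) \in \th_{\d_i}$ is an $\Sfm_k$-module homomorphism (again $d_k$ is $\Sfm_k$-linear and $\d_i$ is $\Sfm$-linear, hence $\Sfm_k$-linear), and it is injective because $\d_i(d_k(\Phi)) \cap D_k = \Phi$ recovers $\Phi$; moreover $\d_i(d_k(\Phi)) \subseteq \d_i(d_k(\Psi))$ forces $\Phi = \d_i(d_k(\Phi)) \cap D_k \subseteq \d_i(d_k(\Psi)) \cap D_k = \Psi$, so it reflects the order. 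Thus $j_k$ is an $\Sfm_k$-module embedding of $\wp D_k$ into $\th_{\d_i}$. One should also check that $j_k$ preserves arbitrary joins, not merely binary ones; this follows since $\d_i$, being the closure operator attached to a module nucleus, sends the join in $\wp E$ (union) to the join in $\th_{\d_i}$, and the union in $\wp D_k$ agrees with the union in $\wp E$ on subsets of $D_k$.

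The main obstacle — though a mild one — is the verification that $\d_i(\Phi) \cap D_i = \Phi$ for $\Phi \in \th_i$, i.e.\ that extending the scalars from $\Sfm_i$ to $\Sfm$ and closing under $\d_i$ does not enlarge a $\g_i$-theory when we cut back down to $D_i$. The subtlety is that $\d_i$ is defined on all of $\wp E$ by axioms and rules of $\vdash_i$, and a priori a substitution in $\Sfm$ (which may mix symbols of $\lang_1$ and $\lang_2$) applied to an axiom or premise could produce, after intersecting with $D_i$, something not obtainable by $\Sfm_i$-substitutions alone. Lemma~\ref{deltapsi} with the roles ``$i$ plays $i$'' (rather than the stated $i \neq k$) — or rather the argument inside its proof, namely that an element of $D_i$ arises by substitution from an element of $E$ only if that element already lies in $D_i$, and that rules of $\vdash_{\d_i}$ involve only $D_i$-constructs — is exactly what rules this out; so I would either invoke a symmetric restatement of Lemma~\ref{deltapsi} or re-run its short argument in this slightly different bookkeeping. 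Once this identity is in hand, injectivity, order-reflection, and join-preservation of $\iota_i$ are immediate, and the proposition follows by assembling $\iota_i$ and $j_k$.
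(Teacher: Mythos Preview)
Your approach is essentially identical to the paper's: you define the same two maps $\Phi \mapsto \d_i(\Phi)$ from $\th_i$ and from $\wp D_k$ into $\th_{\d_i}$, prove injectivity via the identity $\d_i(\Phi)\cap D_i = \Phi$ (resp.\ $\d_i(\Phi)\cap D_k = \Phi$), and invoke Lemma~\ref{deltapsi} for the $\wp D_k$ case. The one difference is that you are more explicit about justifying $\d_i(\Phi)\cap D_i = \Phi$ for $\Phi\in\th_i$ --- the paper simply asserts it --- and you correctly note that this requires the same ``substitutions landing in $D_i$ factor through $\sfmm$'' observation used inside the proof of Lemma~\ref{deltapsi}, not the lemma's statement itself.
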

\begin{proof}
Let us define, for distinct $i$ and $k$ in $\{1,2\}$, the following maps:
$$f_i: \Phi \in \th_i \mapsto \delta_i(\Phi) \in \th_{\delta_i} \quad \text{and} \quad g_k: \Phi \in \wp D_k \mapsto \delta_i(\Phi) \in \th_{\delta_i}.$$
For all $\Phi \in \th_i$, $\Phi = \g_i'(\Phi) = \delta_i(\Phi) \cap D_i$. If $\Psi \in \th_i \setminus \{\Phi\}$, without losing generality, we can assume that there exists $\phi \in \Phi \setminus \Psi$. Then
$$\phi \in \Phi \setminus \Psi = (\delta_i(\Phi) \cap D_i) \setminus (\delta_i(\Psi) \cap D_i) \subseteq \delta_i(\Phi) \setminus \delta_i(\Psi),$$
whence $\delta_i(\Phi) \neq \delta_i(\Psi)$. Then the maps $f_i$ are injective. The fact that they preserve the action from ${\Sfm}_i$ and arbitrary joins is trivial.

The situation for the maps $g_k$ is analogous, the injectivity being a direct consequence of Lemma \ref{deltapsi}.
\end{proof}

We are now ready to prove that each of the the modules of theories $\th_i$ is embeddable as a ${\Sfm}_i$-module in $\th$. By \cite[Theorem 7.1]{russoapal}, this implies that each of the systems $(D_i, \vdash_i)$ is representable in $(E, \vdash_\d)$.

\begin{theorem}\label{gammaiembedth}
There exist ${\Sfm}_i$-module embeddings of $\th_i$, $i = 1, 2$, into $\th$.
\end{theorem}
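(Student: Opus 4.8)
The plan is to realise the embedding explicitly. Viewing $\th_i\subseteq\wp D_i\subseteq\wp E$ and writing $\delta:\wp E\to\wp E_\delta=\th$ for the onto homomorphism attached to the nucleus $\delta$ (Remark~\ref{nuclrem}), I would set
$$h_i:\Phi\in\th_i\longmapsto\delta(\Phi)\in\th.$$
Equivalently, $h_i$ factors as the $\Sfm_i$-embedding $f_i:\th_i\to\th_{\delta_i}$, $\Phi\mapsto\delta_i(\Phi)$, of Proposition~\ref{gammaiembedlem} followed by the map $\th_{\delta_i}\to\th$, $X\mapsto\delta(X)$; since $\delta_i\le\delta$ one has $\delta\circ\delta_i=\delta$, so this second map is a well-defined $\Sfm$-module homomorphism and $h_i(\Phi)=\delta(\delta_i(\Phi))=\delta(\Phi)$. (One may also verify the homomorphism property of $h_i$ directly: from $\g_i'\le\delta$ one gets $\delta\circ\g_i'=\delta$, hence $h_i(\bigvee_j\Phi_j)=\delta(\bigcup_j\Phi_j)=\bigvee_j h_i(\Phi_j)$ in $\th$, and for $\sigma\in\Sfmm$ acting on $\wp E$ through the canonical homomorphism $\Sfmm\to\Sfm$ — licit because $\Sfm$ is the coproduct of $\Sfmm$ and $\Sfmmm$ — one has $h_i(\sigma\cdot_{\g_i}\Phi)=\delta(\sigma\cdot\Phi)=\sigma\cdot_\delta h_i(\Phi)$, using the nucleus inequality for $\delta$.)

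The whole weight then falls on the \emph{injectivity} of $h_i$, which, exactly as in the proof of Proposition~\ref{gammaiembedlem}, amounts to showing
$$\delta(\Phi)\cap D_i=\Phi\qquad\text{for every }\Phi\in\th_i.$$
The inclusion ``$\supseteq$'' is immediate. For ``$\subseteq$'' I would use that $\delta=\delta_1\vee\delta_2$ is the join of two closure operators, so that $\delta(\Phi)=\bigcup_\alpha\Phi_\alpha$ for an increasing transfinite chain with $\Phi_0=\Phi$, $\Phi_{\alpha+1}=\delta_{j(\alpha)}(\Phi_\alpha)$ for an alternating choice $j(\alpha)\in\{1,2\}$, and $\Phi_\lambda=\bigcup_{\alpha<\lambda}\Phi_\alpha$ at limits; it then suffices to prove by induction that $\Phi_\alpha\cap D_i=\Phi$ for all $\alpha$. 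The base and limit steps are trivial. At a step $\Phi_{\alpha+1}=\delta_k(\Phi_\alpha)$ with $k\ne i$, Lemma~\ref{deltapsi} — with the roles of $i$ and $k$ interchanged, applied to the $D_i$-part — gives $\delta_k(\Phi_\alpha)\cap D_i=\Phi_\alpha\cap D_i=\Phi$, the key point being that an element of $\Phi_\alpha$ lying outside $D_i$ cannot feed non-trivially into a rule of $\vdash_k$ whose conclusion is in $D_i$. At a step $\Phi_{\alpha+1}=\delta_i(\Phi_\alpha)$, one needs the conservativity statement $\delta_i(\Phi_\alpha)\cap D_i\subseteq\g_i(\Phi_\alpha\cap D_i)=\g_i(\Phi)=\Phi$: since every connective occurring in an axiom or a rule of $\vdash_i$ belongs to $\lang_i$, any $\vdash_{\delta_i}$-derivation of an element of $D_i$ from $\Phi_\alpha$ can be converted into a $\vdash_i$-derivation of the same element from $\Phi_\alpha\cap D_i=\Phi$ by collapsing, throughout the derivation, each maximal subformula whose principal connective lies in $\lang_k$ — an operation that is harmless thanks to the disjointness of $\lang_1,\lang_2$ and the non-triviality hypothesis of Remark~\ref{nontrivial}, in the spirit of the argument used for Lemma~\ref{deltapsi}. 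Consequently $\delta(\Phi)\cap D_i=\bigcup_\alpha(\Phi_\alpha\cap D_i)=\Phi$, so $h_i(\Phi)=h_i(\Psi)$ forces $\Phi=\Psi$, and $h_i$ is an $\Sfm_i$-module embedding; the symmetric construction for the other index completes the proof.

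I expect the conservativity step ``$\delta_i(\Phi_\alpha)\cap D_i\subseteq\g_i(\Phi)$'' to be the main obstacle. After an intervening application of $\delta_k$ the set $\Phi_\alpha$ genuinely contains $\lang_k$-material, and one must rule out that this material combines with the rules of $\vdash_i$ to yield new $D_i$-consequences. This is precisely where the disjointness of $\lang_1$ and $\lang_2$ — which forces a substitution instance of a $\vdash_i$-construct that lands in $D_i$ to be already an $\lang_i$-substitution instance — together with Remark~\ref{nontrivial} do the essential work. The difference from Lemma~\ref{deltapsi} is that here the mechanism has to be propagated along the whole alternating iteration defining $\delta$, rather than invoked for a single application of a $\delta$-operator.
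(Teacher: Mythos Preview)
Your strategy is sound and the embedding $h_i$ you write down is the same map the paper produces, but the paper reaches it by a quite different and shorter route. Rather than iterate $\delta_1,\delta_2$ transfinitely and control $\Phi_\alpha\cap D_i$ at each stage, the paper uses the saturated-element machinery of Section~\ref{ordtool}: it observes that $\delta$ is the nucleus attached to the $\Sfm$-module congruence on $\wp E$ generated by $R_\delta=\{(\Psi,\delta_j(\Psi))\mid\Psi\in\wp D_j,\ j=1,2\}$, and then verifies directly that each $\delta_i(\Phi)$ with $\Phi\in\wp D_i$ is $R_\delta$-saturated in the sense of Definition~\ref{satelm}. By Theorem~\ref{satquom} this forces $\delta_i(\Phi)\in(\wp E)_\delta=\th$, so $e_i:\g_i(\Phi)\mapsto\delta_i(\Phi)$ already lands in $\th$ and inherits its injectivity from Proposition~\ref{gammaiembedlem}. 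In particular $\delta(\Phi)=\delta_i(\Phi)$ for $\Phi\in\th_i$, so your $h_i$ equals the paper's $e_i$.

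The gain of the paper's approach is that the saturation check is against the \emph{generators} $R_\delta$ only, and those pairs have first coordinate lying in some $\wp D_j$; hence Lemma~\ref{deltapsi} and Proposition~\ref{gammaiembedlem} apply as stated, with no iteration required. Your route, by contrast, must (i) extend Lemma~\ref{deltapsi} to the statement $\delta_k(\Psi)\cap D_i=\Psi\cap D_i$ for \emph{arbitrary} $\Psi\subseteq E$ (since after one alternation $\Phi_\alpha\not\subseteq D_i$, so the lemma's hypothesis fails), and (ii) establish the $\delta_i$-conservativity $\delta_i(\Phi_\alpha)\cap D_i\subseteq\g_i(\Phi_\alpha\cap D_i)$, which---as you rightly flag---is the real work and requires turning the ``collapse of maximal $\lang_k$-subterms'' into a genuine $\lang_i$-algebra retraction $E\to D_i$ that pushes $\vdash_{\delta_i}$-rule instances to $\vdash_i$-rule instances. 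Both steps are doable (the disjointness of $\lang_1,\lang_2$ and Remark~\ref{nontrivial} are precisely what is needed), but the paper's algebraic shortcut renders them unnecessary.
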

\begin{proof}
It is immediate to verify that $\delta$ is the nucleus associated to the congruence generated by the relation
$$R_\delta = \{(\Phi, \delta_i(\Phi)) \mid \Phi \in \wp D_i, i = 1, 2\}.$$
We shall prove that $\delta_i(\Phi)$ is $R_\delta$-saturated for all $\Phi \in \wp D_i$ and for $i=1,2$.

Let $\Sigma \in \Sfm$, $\Psi \in \wp D_k$ and $\Phi \in \wp D_i$, for $i = 1, 2$. For $k = i$, it is obvious that $\Sigma \cdot \Psi \subseteq \delta_i(\Phi)$ if and only if $\Sigma \cdot \delta_i(\Psi) \subseteq \delta_i(\Phi)$. If $k \neq i$, then $\Psi \subseteq \delta_k(\Psi)$ guarantees that $\Sigma \cdot \delta_k(\Psi) \subseteq \delta_i(\Phi)$ implies $\Sigma \cdot \Psi \subseteq \delta_i(\Phi)$. For the converse implication, let us observe that, by Proposition \ref{gammaiembedlem}, $\Sigma \cdot \Psi \subseteq \delta_i(\Phi)$ implies that $\Sigma \cdot \Psi \in D_i$, whence $\Psi \subseteq \Var$. Then $\delta_k(\Psi) = \Psi$ and therefore $\Sigma \cdot \Psi \subseteq \delta_i(\Phi)$ implies $\Sigma \cdot \delta_k(\Psi) \subseteq \delta_i(\Phi)$.

Now that we know that the sets of the form $\delta_i(\Phi)$ are $R_\delta$-saturated and, therefore, $\delta$-closed, we get immediately two injective maps
$$e_i: \g_i(\Phi) \in \th_i \mapsto \delta_i(\Phi) \in \th,$$
which are ${\Sfm}_i$-module embeddings by Proposition \ref{gammaiembedlem}. The assertion is proved.
\end{proof}

Next, we show that the language expansion by means of the tensor product, as shown in Section \ref{tensor}, does not yield an unserviceable abstract object; the resulting module is indeed the module of theories of the most natural possible extension of the initial consequence relation to a domain over a richer language. Indeed, we show that $\th_{\d_1}$ and $\th_{\d_2}$ are nothing else than the result of a language expansion on $\th_1$ and $\th_2$ respectively, namely, the tensor products of $\Sfm$ with those modules of theories, as in the case discussed in Section \ref{tensor}. Consequently, $\th$ is homomorphic image of both such tensor products.

\begin{theorem}\label{mergetensor}
For $i = 1, 2$, the $\Sfm$-modules $\Sfm \tensor_{{\Sfm}_i} \th_i$ and $\th_{\d_i}$ are isomorphic.
\end{theorem}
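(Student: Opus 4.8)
The plan is to treat $i=1$ (the case $i=2$ is identical, with $\Sfmmm$ replacing $\Sfmm$) and to produce mutually inverse $\Sfm$-module homomorphisms between $\Sfm\tensor_{\Sfmm}\th_1$ and $\th_{\d_1}$. First I would record two facts about $\th_{\d_1}=(\wp E)_{\d_1}$. From the description of $\d_1$ as the least $\Sfm$-module nucleus with $\d_1(\Phi)\supseteq\g_1(\Phi)$ for all $\Phi\in\wp D_1$, a short argument shows that $\d_1$ is in fact the nucleus of the $\Sfm$-module congruence $\langle R_1\rangle$ generated by $R_1\bydef\{(\Phi,\g_1(\Phi))\mid\Phi\in\wp D_1\}$: on the one hand $\Phi\subseteq\g_1(\Phi)\subseteq\d_1(\Phi)$ forces $\d_1(\Phi)=\d_1(\g_1(\Phi))$, so $\langle R_1\rangle\subseteq\ker\d_1$; on the other hand the nucleus of $\langle R_1\rangle$ itself lies in the family whose infimum is $\d_1$, so $\ker\d_1\subseteq\langle R_1\rangle$. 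Hence, by Remark~\ref{nuclrem}, $\d_1\colon\wp E\to\th_{\d_1}$ is an onto $\Sfm$-module morphism with $\ker\d_1=\langle R_1\rangle$. Second, by Theorem~\ref{kappax} the $\Sfm$-module $\wp E$ is generated by the very same variable-only construct(s) $g$ that generate $\wp D_1$ as an $\Sfmm$-module, and these $g$ are fixed by a substitution $\kappa\in\Sfmm$ (one of the $\kappa_x$, $\kappa_{x\approx y}$, $\kappa_{(x_1,\dots,x_{m+n})}$) such that $a\cdot\kappa$ depends on $a\in\Sfm$ only through $a\cdot g\in\wp E$.

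Next I would build the surjective $\Sfm$-module homomorphism $\pi\colon\wp E\to\Sfm\tensor_{\Sfmm}\th_1$ sending $g$ to $1\tensor\g_1(g)$; this is precisely the map furnished by the proof of Proposition~\ref{generator}, read with $\lang_1$ and $\lang$ in the roles of $\lang$ and $\lang_1$. Its well-definedness is the observation that $a\cdot(1\tensor\g_1(g))=(a\cdot\kappa)\tensor\g_1(g)$: indeed $1\tensor\g_1(g)=\kappa\tensor\g_1(g)$ because $\kappa\cdot_{\th_1}\g_1(g)=\g_1(\kappa\cdot\g_1(g))=\g_1(\kappa\cdot g)=\g_1(g)$ (using the standard identity $\g_1(a\g_1(u))=\g_1(au)$ and $\kappa\cdot g=g$), so the value depends on $a$ only through $a\cdot g$. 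The crucial computation is then that $\pi$ carries $\Phi\in\wp D_1$ to $1\tensor\g_1(\Phi)$: writing $\Phi=\bigvee_j\kappa_j\cdot g$ with $\kappa_j\in\Sfmm$, the $\Sfm$-linearity of $\pi$, the tensor relations over $\Sfmm$, and $\kappa_j\cdot_{\th_1}\g_1(g)=\g_1(\kappa_j\cdot g)$ give $\pi(\Phi)=\bigvee_j\kappa_j\cdot(1\tensor\g_1(g))=\bigvee_j 1\tensor\g_1(\kappa_j\cdot g)=1\tensor\g_1(\Phi)$, since $1\tensor(-)$ preserves joins. I expect this paragraph to be the real work: one must keep straight that $\wp D_1$ sits in $\wp E$ only as a $\Sfmm$-submodule (not an $\Sfm$-submodule), and handle with care the interplay of the left $\Sfm$-action and the right $\Sfmm$-action on $\Sfm\tensor_{\Sfmm}\th_1$, which is exactly what lets an $\Sfm$-scalar attached to a $\Sfmm$-translate of $g$ be absorbed into the $\th_1$-factor.

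Granting this, the conclusion is formal. From $\pi(\Phi)=1\tensor\g_1(\Phi)=\pi(\g_1(\Phi))$ for $\Phi\in\wp D_1$ we get $R_1\subseteq\ker\pi$, hence $\ker\d_1=\langle R_1\rangle\subseteq\ker\pi$, so $\pi$ factors as $\beta\circ\d_1$ for a (necessarily onto) $\Sfm$-module morphism $\beta\colon\th_{\d_1}\to\Sfm\tensor_{\Sfmm}\th_1$, i.e. $\beta(\d_1(\Phi))=\pi(\Phi)$. Conversely, the map $f_1\colon\Phi\in\th_1\mapsto\d_1(\Phi)$ of Proposition~\ref{gammaiembedlem} is a $\Sfmm$-module homomorphism from $\th_1$ into $\th_{\d_1}$ regarded as a $\Sfmm$-module (restriction of scalars along $\Sfmm\hookrightarrow\Sfm$); by the adjunction recalled at the start of Section~\ref{tensor}, whose left adjoint is $\Sfm\tensor_{\Sfmm}(-)$, it extends uniquely to an $\Sfm$-module homomorphism $\alpha\colon\Sfm\tensor_{\Sfmm}\th_1\to\th_{\d_1}$ with $\alpha(1\tensor\Phi)=\d_1(\Phi)$, hence $\alpha(\sigma\tensor\Phi)=\d_1(\sigma\cdot\Phi)$ for $\sigma\in\Sfm$. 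Moreover $\alpha$ is onto, since $\alpha(1\tensor\g_1(g))=\d_1(g)$ and $\d_1(g)$ generates $\th_{\d_1}$ over $\Sfm$.

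Finally I would check $\beta\circ\alpha=\id$: for $\Phi\in\th_1$ and $\sigma\in\Sfm$, $\beta\alpha(\sigma\tensor\Phi)=\beta(\d_1(\sigma\cdot\Phi))=\pi(\sigma\cdot\Phi)=\sigma\cdot\pi(\Phi)=\sigma\cdot(1\tensor\Phi)=\sigma\tensor\Phi$, using $\g_1(\Phi)=\Phi$; since $\beta\circ\alpha$ and the identity are sup-lattice homomorphisms agreeing on all pure tensors (which generate $\Sfm\tensor_{\Sfmm}\th_1$ under joins), they coincide. As $\alpha$ is onto and $\beta\circ\alpha=\id$, evaluating at $\alpha(s)$ gives $\alpha\circ\beta=\id$ as well, so $\alpha$ is the desired isomorphism $\Sfm\tensor_{\Sfmm}\th_1\cong\th_{\d_1}$. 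The only genuine obstacle in the whole argument is the identity $\pi(\Phi)=1\tensor\g_1(\Phi)$ on $\wp D_1$ from the second paragraph; everything else is routine bookkeeping with nuclei, generators, and the tensor–hom adjunction.
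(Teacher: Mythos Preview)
Your argument is correct, but it proceeds differently from the paper. The paper first observes that $\wp E$ and $\Sfm\tensor_{{\Sfm}_i}\wp D_i$ are isomorphic as $\Sfm$-modules---both are the same retract of the free module $\Sfm^{X}$, where $X$ is the common generating set of Theorem~\ref{kappax}---obtaining an explicit isomorphism $h_i\colon \wp E\to\Sfm\tensor_{{\Sfm}_i}\wp D_i$ extending $x\mapsto 1\tensor x$. It then argues that the nucleus $\d_i$ on $\wp E$ and the nucleus whose image is $\Sfm\tensor_{{\Sfm}_i}\th_i$ are each the least $\Sfm$-module nucleus extending the ${\Sfm}_i$-module nucleus $\g_i$ (respectively $1\tensor\g_i$) on the corresponding ${\Sfm}_i$-submodule, and hence correspond under $h_i$. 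The isomorphism of quotients is then immediate. Your approach instead builds the two morphisms $\alpha$ and $\beta$ directly---$\alpha$ via the tensor--restriction adjunction applied to $f_1$ of Proposition~\ref{gammaiembedlem}, and $\beta$ by first constructing an explicit surjection $\pi\colon\wp E\to\Sfm\tensor_{\Sfmm}\th_1$ (checking well-definedness through the idempotent $\kappa$) and then factoring it through $\d_1$ using your identification of $\ker\d_1$ with the congruence generated by $R_1$. What the paper gains is brevity and a cleaner separation of concerns: the identification $\wp E\cong\Sfm\tensor_{{\Sfm}_i}\wp D_i$ is a general fact about projective modules, and the matching of nuclei is a one-line transport of structure. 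What your version gains is explicitness: you exhibit the isomorphism on elements and avoid invoking the structure of $\Sfm\tensor_{{\Sfm}_i}\wp D_i$ as a retract, at the cost of the computation $\pi(\Phi)=1\tensor\g_1(\Phi)$ for $\Phi\in\wp D_1$, which you rightly flag as the only nontrivial step. One small point: your write-up tacitly treats the cyclic case (a single generator $g$ and idempotent $\kappa$); the sequent case with several types needs a family of generators and idempotents, but the argument extends componentwise without difficulty.
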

\begin{proof}
By the properties of tensor products \cite[Section 6]{russoapal} and Theorem \ref{tensembed}, $\wp E$ and $\Sfm \tensor_{{\Sfm}_i} \wp D_i$ are isomorphic to the same retract of the free module $\Sfm^{X}$, where $X$ is the set of ${\Sfm}_i$-generators of $\wp D_i$ (and of $\Sfm$-generators of $\wp E$), so the bijective map $x \in X \subseteq \wp E \mapsto 1 \tensor x \in \Sfm \tensor_{{\Sfm}_i} \wp D_i$ extends to an isomorphism $h_i: \wp E \to \Sfm \tensor_{{\Sfm}_i} \wp D_i$. On the other hand, both $\d_i$ and the nucleus $\d_i': \Sfm \tensor_{{\Sfm}_i} \wp D_i \to \Sfm \tensor_{{\Sfm}_i} \wp D_i$ whose image is $\Sfm \tensor_{{\Sfm}_i} \th_i$ are the smallest nuclei, on their respective domains, extending the ${\Sfm}_i$-module nuclei $\g_i$ and $1 \tensor \g_i$ on the isomorphic ${\Sfm}_i$-submodules $h_i[\th_i]$ and $1 \tensor \th_i$. Then the mapping $\delta_i(x) \mapsto 1 \tensor \g_i(x) = h_i(\delta_i(x))$ extends to a unique $\Sfm$-module isomorphism $h_i': \th_{\d_i} \to \Sfm \tensor_{{\Sfm}_i} \th_i$.
\end{proof}

Last, we want to stress that the above construction yields a sup-lattice of theories that contains an isomorphic copy of the sup-lattice coproduct of $\th_1$ and $\th_2$.

\begin{theorem}\label{embcoprod}
The coproduct of $\th_1$ and $\th_2$ embeds as a sup-lattice in $\th$.
\end{theorem}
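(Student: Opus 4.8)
The plan is to exhibit an explicit sup-lattice embedding of the coproduct $\th_1 \amalg \th_2$ into $\th = \wp E_\delta$, using the two embeddings already produced in Theorem \ref{gammaiembedth} together with the universal property of the sup-lattice coproduct. Recall from the footnote after Theorem \ref{kappax} that the coproduct of $\th_1$ and $\th_2$ in $\SL$ is, as an object, the Cartesian product $\th_1 \times \th_2$ with componentwise joins, and the coproduct injections are $\Phi_1 \mapsto (\Phi_1, \bot)$ and $\Phi_2 \mapsto (\bot, \Phi_2)$. So the first step is to define
$$e\colon (\Phi_1, \Phi_2) \in \th_1 \times \th_2 \lmapsto \delta\bigl(\delta_1(\Phi_1) \cup \delta_2(\Phi_2)\bigr) = \delta(\Phi_1 \cup \Phi_2) \in \th,$$
where on the right $\Phi_1 \cup \Phi_2 \in \wp E$ makes sense because $\wp D_1, \wp D_2 \subseteq \wp E$ via $d_1, d_2$. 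Equivalently, $e = \bigvee \circ (e_1 \times e_2)$ where $e_1, e_2$ are the embeddings of Theorem \ref{gammaiembedth}; this is manifestly a sup-lattice homomorphism, since $e_1, e_2$ preserve joins and the join map in a sup-lattice coproduct is a morphism onto $\th$.

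The real content is injectivity of $e$. First I would show that $\delta_1(\Phi_1) \cup \delta_2(\Phi_2)$ is already $R_\delta$-saturated (hence $\delta$-closed), so that $e(\Phi_1,\Phi_2) = \delta_1(\Phi_1) \cup \delta_2(\Phi_2)$. The argument should parallel the saturation computation in the proof of Theorem \ref{gammaiembedth}: for $\Sigma \in \Sfm$ and $\Psi \in \wp D_k$, one must check $\Sigma \cdot \Psi \subseteq \delta_1(\Phi_1)\cup\delta_2(\Phi_2)$ iff $\Sigma\cdot\delta_k(\Psi) \subseteq \delta_1(\Phi_1)\cup\delta_2(\Phi_2)$. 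Since $D_1 \cap D_2 = \Var$ (the languages are \emph{disjoint}) and the nuclei act trivially across sorts by Lemma \ref{deltapsi}, an element of $\delta_k(\Psi) \setminus \Psi$ lies in $D_k$, and if $\Sigma \cdot \Psi$ meets $\delta_i(\Phi_i)$ with $i \neq k$ then $\Sigma\cdot\Psi \subseteq D_i \cap D_k = \Var$, forcing $\Psi \subseteq \Var$ and $\delta_k(\Psi) = \Psi$; the case $i = k$ is immediate from $\delta_k$ being a nucleus. Once $e(\Phi_1,\Phi_2) = \delta_1(\Phi_1) \cup \delta_2(\Phi_2)$ is established, injectivity follows by recovering the components: intersecting with $D_i$ and using that $\delta_k(\Phi_k) \cap D_i = \Phi_k \cap D_i \subseteq \Var$ for $k \neq i$ (again Lemma \ref{deltapsi}), one gets $e(\Phi_1,\Phi_2) \cap D_i = \Phi_i \cup (\Phi_k \cap \Var)$; since for $\Phi_i \in \th_i$ we have $\Var \cap D_i$-behaviour controlled by non-triviality (Remark \ref{nontrivial}), the ``$\Var$-part'' is common and cancels, so $e(\Phi_1,\Phi_2)$ determines both $\Phi_1$ and $\Phi_2$.

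The step I expect to be the main obstacle is the bookkeeping around the shared variables $\Var = D_1 \cap D_2$: a priori $\delta_1(\Phi_1)$ and $\delta_2(\Phi_2)$ can overlap precisely in variables (and in the common theorems coming from $\delta(\varnothing)$ via non-triviality), so the naive ``project to $D_i$'' recovery of components is off by this overlap and one must argue it is the \emph{same} overlap regardless of $(\Phi_1,\Phi_2)$ — this is exactly where Remark \ref{nontrivial} ($\delta_i(\{x\}) = \{x\} \cup \delta_i(\varnothing)$) and Lemma \ref{deltapsi} do the work. A clean way to sidestep part of this is to note $e_1, e_2$ are already embeddings (Theorem \ref{gammaiembedth}) and $e_1[\th_1] \cap e_2[\th_2]$ consists of $\delta$-closed subsets of $\Var$, which form a chain-like sublattice on which $e$ restricted to either summand is injective; then a routine diagram chase with the coproduct universal property gives injectivity of $e$ on all of $\th_1 \times \th_2$. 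I would present the direct computation as the main line and mention the categorical shortcut as a remark.
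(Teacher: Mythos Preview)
The saturation claim is where your argument breaks: $\delta_1(\Phi_1)\cup\delta_2(\Phi_2)$ is \emph{not} $R_\delta$-saturated in general. Your justification (``if $\Sigma\cdot\Psi$ meets $\delta_i(\Phi_i)$ with $i\neq k$ then $\Sigma\cdot\Psi\subseteq\Var$'') is a non sequitur---$\delta_i(\Phi_i)$ lives in $E$, not in $D_i$, and more importantly $\Sigma\cdot\Psi$ can split, with some elements landing in $\delta_1(\Phi_1)$ and others in $\delta_2(\Phi_2)$, so the case analysis ``$i=k$ versus $i\neq k$'' does not cover everything. Concretely, let $\vdash_2$ have the single rule $\dfrac{x,\ f_2(x)}{g_2(x)}$ (with $f_2,g_2\in\lang_2$ unary; this respects Remark~\ref{nontrivial}), take $\Phi_1=\gamma_1(\{p\})\ni p$ and $\Phi_2=\gamma_2(\{f_2(p)\})$, and set $\Psi=\{p,f_2(p)\}\in\wp D_2$, $\Sigma=\{\id\}$. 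Then $\Psi\subseteq\delta_1(\Phi_1)\cup\delta_2(\Phi_2)$ (the variable $p$ sits in the first summand, $f_2(p)$ in the second), yet $g_2(p)\in\delta_2(\Psi)$ lies in neither: $\Phi_2$ alone does not prove $p$, and $\delta_1(\Phi_1)$ contains nothing with main connective $g_2$. So the union is not $\delta$-closed and your formula $e(\Phi_1,\Phi_2)=\delta_1(\Phi_1)\cup\delta_2(\Phi_2)$ fails.

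Worse, the ``bookkeeping'' around $\Var=D_1\cap D_2$ that you flag as the main obstacle is not cosmetic: it actually defeats injectivity of $e$ outright, so neither your ``project to $D_i$'' recovery nor the categorical shortcut can succeed. Indeed
\[
e\bigl(\gamma_1(\{p\}),\,\gamma_2(\varnothing)\bigr)=\delta(\{p\})=e\bigl(\gamma_1(\varnothing),\,\gamma_2(\{p\})\bigr),
\]
while the two pairs are distinct in $\th_1\amalg\th_2$ (a variable is never a theorem, since otherwise structurality would force $\gamma_i(\varnothing)=D_i$). The images $e_1[\th_1]$ and $e_2[\th_2]$ thus overlap on nontrivial theories, not just on a common bottom, which is exactly what prevents the coproduct morphism from being monic. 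The paper's proof takes a different line---it never claims the union is closed and instead works directly with $\delta(\Phi')\vee\delta(\Psi')$, invoking a Lemma~\ref{deltapsi}-type derivation analysis to conclude that any $\phi\in\gamma_1(\Phi)$ lying in that join already lies in $\delta(\Phi')$---but the same pair $\phi=p$, $\Phi'=\varnothing$, $\Psi'=\{p\}$ shows that this step is equally delicate; the difficulty with shared variables is intrinsic to the map $e$, not to your particular strategy.
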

\begin{proof}
Recalling that, for all $\Phi \in D_1$ and $\Psi \in D_2$, $\delta(\Phi) = \delta_1(\Phi)$ and $\delta(\Psi) = \delta_2(\Psi)$, by Theorem \ref{gammaiembedth} and \cite[Proposition 4.13]{russojlc}, the unique sup-lattice homomorphism extending the embeddings of $\th_1$ and $\th_2$ into $\th$ is 
$$e: (\g_1(\Phi),\g_2(\Psi)) \in \th_1 \amalg \th_2 \mapsto \delta(\Phi) \vee \delta(\Psi) \in \th.$$
In order to prove that it is injective, let $(\g_1(\Phi),\g_2(\Psi)), (\g_1(\Phi'),\g_2(\Psi')) \in \th_1 \amalg \th_2$ be two pairs with the same image under $e$. With an argument analogous to the one used in the proof of Lemma \ref{deltapsi}, it is not hard to see that any $\phi \in \g_1(\Phi)$ belongs to $\delta(\Phi') \vee \delta(\Psi')$ if and only if it belongs $\delta(\Phi')$ and, therefore, to $\g_1(\Phi')$ and, similarly, an the element $\psi$ of $\g_2(\Psi)$ is contained in $\delta(\Phi') \vee \delta(\Psi')$ if and only if it belongs to $\g_2(\Psi')$. This implies that $(\g_1(\Phi),\g_2(\Psi)) \leq (\g_1(\Phi'),\g_2(\Psi'))$, the converse inequality being completely analogous. The assertion follows.
\end{proof}

The relationships among the various modules of theories presented in this section can be resumed in the following commutative diagram, where $\d_i'$ denotes $\d_{\restr \th_{\d_i}}$, for $i=1,2$. We remark that the arrows of the diagram are $\Sfmm$-, $\Sfmmm$- or $\Sfm$-module morphisms, depending from the domain of the arrow in every single case, except for the dotted arrows, which are sup-lattice morphisms. For a better graphical rendering, $\delta$  do not appears as a direct arrow, it coincides with $\d_1'\d_1$ and $\d_2'\d_2$.

\begin{equation}\label{iperdiag}
\begin{tikzcd}[row sep=3em,column sep=6em]
\wp D_1 \arrow[dd, two heads,"\g_1"']  &\wp E \arrow[l, hookleftarrow, "d_1"'] \arrow[dd, two heads,"\d_1"']   &\wp D_2 \arrow[ddl,hook,"g_2",near end] \arrow[dr,two heads,"\g_2"] \arrow[l, hook, "d_2"']&\\
& &\th_{\d_2} \arrow[ull,hookleftarrow,crossing over,shift left,"g_1",near end]\arrow[dd,"\d_2'", two heads] \arrow[ul,twoheadleftarrow, crossing over,"\d_2"']  & \th_2 \arrow[ddl, hook,"e_2"'] \arrow[l,"f_2", hook] \arrow[dotted, hook, dddl, "\bot \times \id"]\\
\th_1 \arrow[r,hook,"f_1"] \arrow[drr,hook,shift right,"e_1"] \arrow[dotted, hook, shift right, ddrr, "\id \times \bot"']&\th_{\d_1} \arrow[dr,two heads,"\d_1'"]& &\\
 &&\th& \\
 &&\th_1 \amalg \th_2 \arrow[dotted, hook, "e"', u]&
\end{tikzcd}
\end{equation}

\section{Logical amalgamation}
\label{logicamalg}

Now that we have a natural and solid logical version of the coproduct of deductive systems, it is natural, as a next step, to try to handle situations in which we have two systems with a common fragment, possibly up to translations and interpretations, and we want to embed this sort of amalgam.

All the notations used in Section \ref{coproddssame} remain valid in this section. Besides that, let us add another language $\cat M$ and a deductive system $(C, \vdash_\beta)$ on $\cat M$ whose domain is, again, of the same type of $D_1$ and $D_2$, with associated nucleus $\beta$ and module of theories $\th' = \wp C_\beta$. Let us also suppose that there exist translations $\tau_i: \cat M \to \lang_i$ and structural representations $r_i: \th' \to \th_i$ via $\tau_i$, $i = 1,2$. We refer the reader to \cite{russoapal} for the definitions and results about translations and the various kinds of interpretations, including representations.

First of all, we observe that, thanks to the quantale embeddings $t_i$ induced by the translations $\tau_i$, and their compositions with the inclusion morphisms of ${\Sfm}_i$ into $\Sfm$, all of the modules and embeddings which appear in the previous section, including $e$, will now become also $\wp\Sigma_{\cat M}$-modules and $\wp\Sigma_{\cat M}$-module embeddings. By \cite[Theorem 7.1]{russoapal}, we also have two $\wp\Sigma_{\cat M}$-module morphisms $s_i: \wp C \to \wp D_i$ such that $\g_i \circ s_i = r_i \circ \beta$, $i = 1,2$.

Let $\e$ be the $\Sfm$-module nucleus on $\wp E$ associated to the consequence relation determined by the union of the axioms and rules of $\vdash_1$, $\vdash_2$, and the set of rules
$$\Theta = \left\{\frac{e_ir_i(\{\phi\})}{e_kr_k(\{\phi\})} \bigg| \phi \in C, i \neq k \in \{1,2\}\right\}.$$
If $\zeta$ is the nucleus associated to the consequence relation determined only by the rules in $\Theta$, we have clearly that $\e = \delta \vee \zeta = \delta_1 \vee \delta_2 \vee \zeta$.

In what follows, let us denote by $\e_i$ the $\Sfm$-nucleus $\delta_i \vee \zeta$ on $\wp E$, for $i = 1,2$.

In the wake of the results of the previous section, we shall prove that the consequence relation associated to $\e$ is an excellent candidate as a deductive systems which is able to amalgamate the V-formation of the initial systems without reducing the expressing power of the larger systems and languages. Moreover, we will show that the algebraic amalgamation of the given $\wp \Sigma_{\cat M}$-modules of theories embeds in this new system, thus showing, on the one hand, that the abstract algebraic construction has indeed a logical meaning and, on the other hand, that the system $(E, \vdash_\e)$ is very well behaved also from the algebraic viewpoint.

The following two propositions are not directly involved in the proofs of the main results of this section, but are useful to understand how the consequence relations we just defined work.
\begin{proposition}\label{zetadelta}
For $i \neq k \in \{1,2\}$, and for all $\Phi \cup \{\psi\} \in \wp D_k$, $\{\psi\} \subseteq \e_i(\Phi)$ if and only if one of the following two holds:
\begin{enumerate}[(i)]
\item $\psi \in \Phi$, or
\item there exist $\sigma \in \sfm$ and $\Phi' \subseteq \Phi$ such that $\Phi' = \{\sigma\} \cdot e_kr_k(\Lambda)$, $\{\psi\} = \{\sigma\} \cdot e_kr_k(\{\xi\})$, for some $\Lambda \cup \{\xi\} \in \wp C$, and $\Lambda \vdash' \xi$.
\end{enumerate}
\end{proposition}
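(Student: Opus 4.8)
The plan is to unwind the definition of $\e_i = \delta_i \vee \zeta$ in terms of the underlying axioms and inference rules and then mimic the argument used in the proof of Lemma \ref{deltapsi}. Recall that $\e_i$ is the nucleus associated to the consequence relation $\vdash_{\e_i}$ determined by the union of the axioms and rules of $\vdash_i$ together with the rules in $\Theta$. By Definition \ref{consrel}, $\{\psi\} \subseteq \e_i(\Phi)$ means that $\psi$ lies in the smallest subset of $\wp E$ containing $\Phi$ and all substitution instances of the axioms of $\vdash_i$ and closed under direct derivability by the rules of $\vdash_i$ and of $\Theta$. The right-to-left implication (either (i) or (ii) implies $\{\psi\}\subseteq\e_i(\Phi)$) is immediate: case (i) is reflexivity, and in case (ii), since $\Phi' = \{\sigma\}\cdot e_kr_k(\Lambda) \subseteq \Phi$ and $\Lambda \vdash' \xi$, repeated application of the rules in $\Theta$ (pulling each element of $\Lambda$ back into $C$, deriving $\xi$ inside $\vdash'$, and pushing forward via $e_kr_k$) together with the structurality of $\vdash_{\e_i}$ under $\sigma$ yields $\{\psi\} = \{\sigma\}\cdot e_kr_k(\{\xi\})$.

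For the forward implication, I would argue by induction on the derivation of $\psi$ from $\Phi$ and the axioms. First, as in Lemma \ref{deltapsi}, observe that no element of $D_k$ can be a theorem of $\vdash_{\e_i}$: a substitution instance of an axiom of $\vdash_i$ lies in $D_i$, and, since the systems are non-trivial, the only way a derivation lands in $D_i \cap D_k = \Var$ is by having started there; and the rules in $\Theta$ only produce elements in the image of some $e_kr_k$ or $e_ir_i$ applied to a single formula, which are themselves in $D_i\cup D_k$, so again no genuinely new theorem in $D_k$ appears. The key point is that whenever a rule of $\vdash_i$ is applied and the conclusion lands in $D_k$, the non-triviality argument of Lemma \ref{deltapsi} forces that conclusion to already be among the premises, i.e.\ in $\Phi$. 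Thus the only rules that can genuinely ``do something'' while keeping us inside $D_k$ are the rules of $\Theta$, and each such application contributes exactly an instance $\{\sigma\}\cdot e_kr_k(\{\xi\})$ obtained from premises $\{\sigma\}\cdot e_kr_k(\{\phi\})$. Tracking a whole derivation, the formulas of $C$ that get pulled in assemble into a set $\Lambda$, the target into $\xi$, the substitution witnessing direct derivability into $\sigma$, and the fact that $\xi$ was reachable from $\Lambda$ using only $\Theta$-steps translates back, via the representations $r_i$ and the faithfulness encoded in $e_i r_i$ being embeddings, into $\Lambda \vdash' \xi$; this gives case (ii). If no $\Theta$-step was needed, we are in case (i).

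The main obstacle I anticipate is the bookkeeping in the inductive step: a derivation may interleave rules of $\vdash_i$, rules of $\vdash_k$ (which is not among the rules of $\vdash_{\e_i}$, so this cannot happen directly, but translated fragments of $\vdash'$ acting through $\Theta$ can), and several $\Theta$-steps, and one must show that all the intermediate elements that stay in $D_k$ can be coherently written as $\{\sigma\}\cdot e_kr_k(\text{something in }\wp C)$ for a \emph{single} substitution $\sigma$ and a \emph{single} derivation in $\vdash'$. The cleanest way to handle this is probably to first reduce to the case where every premise used is either in $\Phi$ or is itself of the form $\{\sigma\}\cdot e_kr_k(\{\phi\})$ (using non-triviality to discard everything else as already being in $\Phi$), and then to invoke the structurality and transitivity of $\vdash'$ to amalgamate the pieces; the embedding property of $e_k r_k$ from Proposition \ref{gammaiembedlem} and Theorem \ref{gammaiembedth} is what guarantees that distinct elements of $\wp C$ are not accidentally identified along the way, so the reconstruction of $\Lambda$, $\xi$, and the derivation $\Lambda \vdash' \xi$ is unambiguous.
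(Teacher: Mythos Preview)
Your overall strategy matches the paper's: unwind $\e_i$ as the system with the axioms and rules of $\vdash_i$ together with the rules in $\Theta$, then argue as in Lemma~\ref{deltapsi}. There is, however, a genuine error in your treatment of the theorem case. You assert that ``no element of $D_k$ can be a theorem of $\vdash_{\e_i}$'', and this is false. If $\xi \in C$ is any theorem of $\vdash'$, then by the representation $r_i$ we have $e_ir_i(\{\xi\}) \subseteq \delta_i(\varnothing) \subseteq \e_i(\varnothing)$, and one application of the $\Theta$-rule with premise $e_ir_i(\{\xi\})$ (possibly under a substitution $\sigma \in \sfm$) puts the elements of $\{\sigma\}\cdot e_kr_k(\{\xi\})$ into $\e_i(\varnothing)$; these include elements of $D_k$. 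Your justification (``the rules in $\Theta$ \ldots so again no genuinely new theorem in $D_k$ appears'') draws exactly the wrong conclusion: the $\Theta$-rules are precisely what manufacture theorems in $D_k$. The paper handles this correctly by observing that any such $\psi$ already satisfies condition~(ii) with $\Lambda = \varnothing$, rather than being excluded.

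A smaller imprecision: in the right-to-left direction you speak of ``deriving $\xi$ inside $\vdash'$'', but $\vdash'$ is not a subsystem of $\vdash_{\e_i}$. The correct route (which the paper makes explicit in the converse direction) is: from $\Lambda \vdash' \xi$ obtain $r_i(\Lambda) \vdash_i r_i(\xi)$ via the representation, whence $e_ir_i(\Lambda) \vdash_{\delta_i} e_ir_i(\xi)$, and then bracket this derivation with $\Theta$-rules on either side.
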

\begin{proof}
The proof proceeds similarly to the one of Lemma \ref{deltapsi}, the right-to-left implication being, again, trivial.

First, observe that the axioms of $\vdash_{\e_i}$ coincide with those of $\vdash_{\delta_i}$ because $\zeta$ has no axioms. Since $\psi \in \e_i(\Phi)$ if and only if $\Phi \vdash_{\e_i} \psi$, by Definition \ref{consrel} we have that $\psi \in \e_i(\Phi)$ if and only if $\psi$ satisfies any of the following conditions:
\begin{itemize}
\item $\psi$ is a theorem of $\vdash_{\e_i}$,
\item $\psi \in \Phi$, or
\item $\psi$ is derivable from $\Phi$ and the axioms of $\vdash_{\delta_i}$ via inference rules of the system, i.~e., the inference rules of $\vdash_{\delta_i}$ plus those in $\Theta$.
\end{itemize}
Using $\Theta$, we have that $\psi$, being an element of $D_k$, can be a theorem of $\vdash_{\e_i}$ if and only if $\{\psi\} = \{\sigma\} \cdot e_kr_k(\{\xi\})$ for some theorem $\xi$ of $\vdash'$ and $\sigma \in \sfm$, hence this case verify the (ii) of the assertion.

Now, let us assume that $\psi$ is neither in $\Phi$ nor a theorem of $\vdash_{\e_i}$. Since $\psi \in D_k$, by Lemma \ref{deltapsi}, it can only be obtained from $\Phi$ via inference rules of $\e_i$ by a sequence of deductions starting and ending with inference rules in $\Theta$, which guarantees the existence of $\sigma \in \sfm$, $\Phi' \subseteq \Phi$ and $\Lambda \cup \{\xi\} \in \wp C$ such that $\Phi' = \{\sigma\} \cdot e_kr_k(\Lambda)$, $\{\psi\} = \{\sigma\} \cdot e_kr_k(\{\xi\})$ and $e_ir_i(\Lambda) \vdash_{\e_i} e_ir_i(\xi)$. Now, since $\vdash'$ is representable in $\vdash_i$ and the latter in $\vdash_{\delta_i}$ by Theorem \ref{gammaiembedth}, we have: $e_ir_i(\Lambda) \vdash_{\delta_i} e_ir_i(\xi)$ if and only if $r_i(\Lambda) \vdash_i r_i(\xi)$ if and only if $\Lambda \vdash' \xi$. The proof is complete.
\end{proof}

\begin{proposition}\label{epsilonik}
For $i \neq k \in \{1,2\}$, let $\Phi \in \wp D_i$ and $\psi \in D_k$. If $\{\psi\} \subseteq \e_i(\Phi)$ then there exists $\psi' \in e_kr_k(C)$ and $\sigma \in \sfm$ such that $\psi = \sigma \cdot \psi'$.
\end{proposition}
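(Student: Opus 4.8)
The plan is to argue by induction on the length of a $\vdash_{\e_i}$-derivation of $\psi$ from $\Phi$, following the pattern of Lemma~\ref{deltapsi} and Proposition~\ref{zetadelta}. Since $\zeta$, and hence $\e_i=\delta_i\vee\zeta$, adds no axioms, the axioms of $\vdash_{\e_i}$ are those of $\vdash_i$, while its inference rules are those of $\vdash_i$ together with the rules in $\Theta$. The crucial preliminary step I would isolate is this: in any such derivation, no element of $D_k\setminus\Var$ can be produced except by an application of a rule $\frac{e_ir_i(\{\phi\})}{e_kr_k(\{\phi\})}\in\Theta$ with $\phi\in C$. The reason is that a substitution only introduces further occurrences of connectives, so if a substitution instance of an $\vdash_i$-axiom, of the conclusion of an $\vdash_i$-rule, or of the ($D_i$-sided) conclusion of a reverse $\Theta$-rule $\frac{e_kr_k(\{\phi\})}{e_ir_i(\{\phi\})}$ were to lie in $D_k$, the underlying object would already belong to $D_i\cap D_k=\Var$; for an axiom (hence any theorem) of $\vdash_i$ this is impossible by non-triviality (Remark~\ref{nontrivial}), since $z\in\g_i(\varnothing)$ for a variable $z$ would give $\g_i(\varnothing)=D_i$ by structurality, and in the remaining cases the very same reasoning as in the proof of Lemma~\ref{deltapsi} (using that the representations $r_j$ fix the variables) shows that the element produced coincides with one already derived, so that the induction hypothesis applies to it.

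Granting this, the substantive case is $\psi\notin\Var$. Then $\psi\notin\Phi$ (as $\Phi\subseteq D_i$ and $D_i\cap D_k=\Var$) and, by the preliminary step, the step of the derivation at which $\psi$ appears is an application of some rule $\frac{e_ir_i(\{\phi\})}{e_kr_k(\{\phi\})}$ of $\Theta$, $\phi\in C$: there are $\sigma\in\sfm$ and $\chi\in e_kr_k(\{\phi\})$ with $\sigma\cdot\chi=\psi$. Since $\{\phi\}\subseteq C$ and $\beta$, $r_k$, $e_k$ are monotone, $\chi\in e_kr_k(\{\phi\})\subseteq e_kr_k(C)$, and so $\psi'=\chi$ works: $\psi=\sigma\cdot\psi'$ with $\psi'\in e_kr_k(C)$.

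For the case $\psi\in\Var$ I expect no difficulty: either $\psi$ is produced by a $\Theta$-rule, in which case the corresponding $\chi$ is itself a variable of $e_kr_k(\{\phi\})\subseteq e_kr_k(C)$ and $\psi=\sigma\cdot\chi$ as above; or it descends, through $\vdash_i$-rules, from a variable of $\Phi$, and since the translations $\tau_k$ (whence $r_k$) and the embeddings $e_k$ fix the variables, $\psi\in r_k(\beta(\{\psi\}))\subseteq r_k(C)$ yields $\psi\in e_kr_k(C)$, so one takes $\sigma=\id$, $\psi'=\psi$. Together with the base case of a one-step derivation, which falls under one of the situations already treated, this closes the induction.

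I expect the genuine difficulty to be concentrated in the preliminary step of the first paragraph: establishing that a formula of $D_k$ can enter a $\vdash_{\e_i}$-derivation starting from $\Phi\subseteq D_i$ only through the ``forward'' rules of $\Theta$ (or trivially, as a variable). This requires combining the ``substitutions only add connectives'' remark with non-triviality and with the way the representations $r_j$ interact with variables, and it is the point at which one must argue carefully that neither the $\vdash_i$-rules nor the reverse $\Theta$-rules can produce anything new on the $D_k$-side. Once this is in place, the rest is a routine induction on derivation length together with the monotonicity of $e_k\circ r_k$.
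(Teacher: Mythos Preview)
Your proposal is correct and follows essentially the same line as the paper's proof. Both arguments isolate the last inference step that produces $\psi$ and observe that, when $\psi\notin\Var$, this step cannot be an instance of a $\delta_i$-rule (nor of a ``reverse'' $\Theta$-rule), since such rules have conclusions in $D_i$ and a substitution instance of an element of $D_i$ lies in $D_k$ only if the element is a variable; hence the last step is a forward $\Theta$-rule, from which $\psi'\in e_kr_k(\{\xi\})\subseteq e_kr_k(C)$ and $\sigma$ are read off directly.

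The paper is considerably terser: it does not set up an induction, dismisses the variable case as ``trivially verified,'' and handles the non-variable case by a single glance at the final derivation step, without separately discussing the reverse $\Theta$-rules. Your inductive framing is harmless but unnecessary---once one takes a derivation in which $\psi$ first appears at the last step, no appeal to an inductive hypothesis is needed. Your treatment of the variable case (invoking that the translations $\tau_k$, the representations $r_k$, and the embeddings $e_k$ fix variables) makes explicit what the paper leaves tacit, and your explicit handling of the reverse $\Theta$-rules is a reasonable amplification of the same ``conclusion lies in $D_i$'' observation that the paper applies only to $\delta_i$-rules. None of this constitutes a genuinely different route.
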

\begin{proof}
If $\psi \in \Var$, the assertion is trivially verified, so let us assume that $\psi \notin \Var$. If $\psi$ is derivable from $\Phi$, there exists $\Psi \subseteq \e_i(\Phi) \setminus \{\psi\}$ such that $\psi$ is directly derivable from $\Psi$ by means of a single application of an inference rule (one may think of $\Psi \vdash_{\e_i} \psi$ as the last step of a proof). Now, $\psi$ cannot be the consequence of an instance of a $\delta_i$-inference rule because it cannot be obtained by applying a substitution of $\sfm$ to an element of $D_i$ other than a variable, therefore there exists $\phi \in \Psi$ such that $\phi$ entails $\psi$ by means of a rule in $\Theta$. Then there exist $\xi \in C$ and $\sigma \in \sfm$ such that $\sigma \cdot e_ir_i(\xi) = \phi$ and $\sigma \cdot e_kr_k(\xi) = \psi$, and the assertion follows with $\psi' = e_kr_k(\xi)$.
\end{proof}

\begin{lemma}\label{zetadelta2}
There exist ${\Sfm}_i$-module embeddings of $\th_i$ into $\th_{\e_i}$, for $i \in \{1,2\}$.
\end{lemma}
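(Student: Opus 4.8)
The plan is to imitate the proof of Proposition~\ref{gammaiembedlem}. For $i\neq k$ in $\{1,2\}$ I would take the candidate
\[
 f_i\colon\ \g_i(\Phi)\in\th_i\ \mapsto\ \e_i(\Phi)\in\th_{\e_i}\qquad(\Phi\subseteq D_i\subseteq E),
\]
and check, by verifications identical to those in Proposition~\ref{gammaiembedlem}, that it is a well-defined ${\Sfm}_i$-module homomorphism: $\e_i\circ\g_i=\e_i$ on subsets of $D_i$ because $\g_i(\Phi)=\g_i'(\Phi)\subseteq\delta_i(\Phi)\subseteq\e_i(\Phi)$, so the value does not depend on the representative; joins go to joins since the join of $\th_{\e_i}$ is $\e_i$ applied to unions; and the ${\Sfm}_i$-action is respected because $\e_i$ is a nucleus. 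Everything then reduces to the injectivity of $f_i$, and it is enough to establish that $\e_i(\Phi)\cap D_i=\Phi$ for every $\Phi\in\th_i$: this makes $f_i$ order-reflecting, since $\e_i(\Phi)\subseteq\e_i(\Psi)$ then forces $\Phi=\e_i(\Phi)\cap D_i\subseteq\e_i(\Psi)\cap D_i=\Psi$, hence injective.

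To prove $\e_i(\Phi)\cap D_i=\Phi$, the inclusion $\supseteq$ is clear, and for $\subseteq$ I would inspect a $\vdash_{\e_i}$-derivation of some $\psi\in D_i$ from $\Phi$, in the spirit of Lemma~\ref{deltapsi}, tracking two facts. First, a $\delta_i$-step never introduces an element of $D_k$ — by the substitution observation in the proof of Lemma~\ref{deltapsi} together with non-triviality (Remark~\ref{nontrivial}), an instance of a $\vdash_i$-axiom or a $\vdash_i$-rule conclusion lying in $D_k$ would have to be a variable already present — and inside $D_i$ such a step only yields $\vdash_i$-consequences, hence elements of $\g_i(\Phi)=\Phi$. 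Second, the rules of $\Theta$ shuttle material only between $D_i$ and $D_k$: $\frac{e_ir_i(\{\xi\})}{e_kr_k(\{\xi\})}$ carries an already-derived $D_i$-element $\sigma\cdot e_ir_i(\{\xi\})$ to the $D_k$-element $\sigma\cdot e_kr_k(\{\xi\})$, and $\frac{e_kr_k(\{\xi\})}{e_ir_i(\{\xi\})}$ does the reverse. Hence any element of $D_i$ occurring in the derivation which is not already a $\delta_i$-consequence of $\Phi$ must come from a return step, so it equals $\sigma\cdot e_ir_i(\{\xi\})$ with $\sigma\cdot e_kr_k(\{\xi\})$ already derived; tracing that $D_k$-element backwards — only $\Theta$-steps are possible there, $\delta_i$ being trivial on $D_k$ by Lemma~\ref{deltapsi} — one arrives, after finitely many bounces, at premises $\sigma\cdot e_ir_i(\Lambda)\subseteq\Phi$ with $\Lambda\cup\{\xi\}\in\wp C$ and $\Lambda\vdash'\xi$. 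Since $r_i$ is a structural representation, $\Lambda\vdash'\xi$ gives $r_i(\Lambda)\vdash_i r_i(\xi)$, hence $e_ir_i(\Lambda)\vdash_{\delta_i} e_ir_i(\xi)$ by Theorem~\ref{gammaiembedth}, and applying $\sigma$ gives $\sigma\cdot e_ir_i(\xi)\in\delta_i(\Phi)\cap D_i=\Phi$. An induction on the length of the derivation, run simultaneously on the $D_i$- and $D_k$-parts, then shows that the $D_i$-content of $\e_i(\Phi)$ never exceeds $\Phi$.

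The delicate point — and where I expect the real work to lie — is the bookkeeping behind the return step: a single $D_k$-element may be written as $\sigma\cdot e_kr_k(\{\xi\})$ in several ways, and one must be sure that whenever one such presentation comes from a premise in $\Phi$ the others do as well, and that iterated detours $D_i\rightsquigarrow D_k\rightsquigarrow D_i$ can only reproduce $r_i$-images of $\vdash'$-consequences of premises already in $\Phi$. This is in effect a faithfulness statement for the representations $r_i,r_k$ on $\wp C$ — equivalently, that the rules of $\Theta$ act confluently on the generators — and it is exactly the place where the module-theoretic hypotheses, namely the representation property of $r_i$ through $\tau_i$ and the $\g_i$-closedness of $\Phi$, become indispensable. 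With this in place, the maps $f_1$ and $f_2$ are the desired ${\Sfm}_i$-module embeddings.
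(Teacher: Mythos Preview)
Your approach is exactly the paper's: it defines the same map $\Phi\mapsto\e_i(\Phi)$ and derives injectivity from the equality $\e_i(\Phi)\cap D_i=\Phi$, by an argument that is verbatim the proof of Proposition~\ref{gammaiembedlem} with $\e_i$ in place of $\delta_i$. The only difference is that the paper simply \emph{asserts} the key equality ``$\Phi=\g_i'(\Phi)=\e_i(\Phi)\cap D_i$'' without justification, whereas you supply a derivation-tracking argument for it in the spirit of Lemma~\ref{deltapsi} and Proposition~\ref{zetadelta}; so your write-up is in fact more detailed than the paper's at the one nontrivial point.
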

\begin{proof}
Let us define, for $i \in \{1,2\}$, the following map:
$$l_i: \Phi \in \th_i \mapsto \e_i(\Phi) \in \th_{\e_i}.$$
For all $\Phi \in \th_i$, $\Phi = \g_i'(\Phi) = \e_i(\Phi) \cap D_i$. If $\Psi \in \th_i \setminus \{\Phi\}$, without losing generality, we can assume that there exists $\phi \in \Phi \setminus \Psi$. Then
$$\phi \in \Phi \setminus \Psi = (\e_i(\Phi) \cap D_i) \setminus (\e_i(\Psi) \cap D_i) \subseteq \e_i(\Phi) \setminus \e_i(\Psi),$$
whence $\e_i(\Phi) \neq \e_i(\Psi)$. Then the maps $l_i$ are injective. The fact that they preserve the action from ${\Sfm}_i$ and arbitrary joins is trivial.
\end{proof}

\begin{theorem}\label{epsilon}
There exist ${\Sfm}_i$-module embeddings of $\th_i$ into $\th_\e$, for $i \in \{1,2\}$.
\end{theorem}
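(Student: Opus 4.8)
The plan is to mimic the proof of Theorem \ref{gammaiembedth}, now working with the nucleus $\e = \delta \vee \zeta$ in place of $\delta$. First I would identify $\e$ as the nucleus associated to the congruence generated by an explicit relation. Recalling that $\delta$ comes from $R_\delta = \{(\Phi,\delta_i(\Phi)) \mid \Phi \in \wp D_i,\ i=1,2\}$ and that $\zeta$ is generated by the rules in $\Theta$, one checks that $\e$ is the nucleus of the congruence generated by
$$R_\e = R_\delta \cup \{(e_kr_k(\{\phi\}),\, e_ir_i(\{\phi\})) \mid \phi \in C,\ i \neq k \in \{1,2\}\}.$$
Then, exactly as in Section \ref{coproddssame}, the strategy is to exhibit, for each theory $\Phi \in \th_i$, an $R_\e$-saturated element of $\wp E$ whose restriction to $D_i$ recovers $\Phi$, and to use Theorem \ref{satquom} to conclude it is $\e$-closed; injectivity of the resulting map $\th_i \to \th_\e$ then follows as in Lemma \ref{zetadelta2}, since $\Phi = \e_i(\Phi) \cap D_i$ and $\e_i \le \e$.

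The natural candidate is $\e_i(\Phi)$ for $\Phi \in \th_i$, or more precisely $\e(\Phi) = \e_i(\Phi)$, and I would prove it is $R_\e$-saturated by verifying condition (\ref{sateq}) against each generating pair. For the pairs in $R_\delta$ the argument is verbatim the one in the proof of Theorem \ref{gammaiembedth}, using Proposition \ref{gammaiembedlem}/\ref{zetadelta2} to see that whenever $\Sigma \cdot \Psi \subseteq \e_i(\Phi)$ with $\Psi \in \wp D_k$, $k \ne i$, one must have $\Psi \subseteq \Var$ so that $\delta_k$ acts trivially. For the new pairs $(e_kr_k(\{\phi\}), e_ir_i(\{\phi\}))$ I need: for all $\Sigma \in \Sfm$ and $\phi \in C$, $\ \Sigma \cdot e_kr_k(\{\phi\}) \subseteq \e_i(\Phi) \iff \Sigma \cdot e_ir_i(\{\phi\}) \subseteq \e_i(\Phi)$. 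Here Proposition \ref{zetadelta} (and its companion Proposition \ref{epsilonik}) does the work: an element of $D_k$ lies in $\e_i(\Phi)$, for $\Phi \in \wp D_i$, precisely when it is of the form $\{\sigma\} \cdot e_kr_k(\{\xi\})$ with $\Lambda \vdash' \xi$ for a suitable premise set $\Lambda$ drawn from a $\Theta$-deduction, and the symmetric characterization holds for elements of $D_i$; combining this with the fact that $r_i,r_k$ are $\wp\Sigma_{\cat M}$-module morphisms out of $\th'$ and that $\Lambda \vdash' \xi$ is the common condition, the two sides of the equivalence match up. One must be slightly careful to handle $\Phi$ not a priori contained in a single $D_i$ for the mixed-type pairs, and the trivial scalar $\Sigma = \varnothing$ and variable cases, but these are routine.

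Having established $R_\e$-saturation, Theorem \ref{satquom} gives that each $\e(\Phi)$, $\Phi \in \th_i$, is $\e$-closed, so the maps
$$e_i': \g_i(\Phi) \in \th_i \mapsto \e_i(\Phi) \in \th_\e$$
are well defined, and they are ${\Sfm}_i$-module embeddings by Lemma \ref{zetadelta2} together with the join- and action-preservation already noted there. I expect the main obstacle to be the verification of (\ref{sateq}) for the $\Theta$-generated pairs: one genuinely needs the fine control over $\Theta$-deductions provided by Propositions \ref{zetadelta} and \ref{epsilonik}, rather than the purely syntactic triviality argument of Lemma \ref{deltapsi}, because the rules in $\Theta$ do move material between $D_1$ and $D_2$. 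Everything else is a transcription of the Section \ref{coproddssame} argument, and the diagram-level consequences (embedding of $\th'$ and of the $\wp\Sigma_{\cat M}$-module amalgam, as promised in the section's preamble) would then follow along the lines of Theorems \ref{mergetensor} and \ref{embcoprod}.
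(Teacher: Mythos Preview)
Your approach has a genuine gap in the $R_\delta$-saturation step. You claim the argument of Theorem \ref{gammaiembedth} works verbatim, in particular that $\Sigma\cdot\Psi\subseteq\e_i(\Phi)$ with $\Psi\in\wp D_k$, $k\neq i$, forces $\Psi\subseteq\Var$. That implication was valid for $\delta_i(\Phi)$ precisely because Lemma \ref{deltapsi} guarantees $\delta_i(\Phi)\cap D_k$ consists only of elements already in $\Phi$; but it fails outright for $\e_i(\Phi)$. The whole point of adjoining the rules in $\Theta$ is that $\e_i(\Phi)$ now contains non-variable elements of $D_k$, namely substitution instances of sets $e_kr_k(\{\xi\})$ --- this is exactly what Propositions \ref{zetadelta} and \ref{epsilonik} describe. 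Taking $\Sigma=\{\id\}$ and $\Psi$ any such element already refutes the inference you are relying on. To repair this step you would need to show directly that $\e_i(\Phi)$ is $\delta_k$-closed, which is essentially the content of the theorem itself, so the argument becomes circular.

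The paper sidesteps this obstacle by changing the ambient module: it regards $\e$ as a nucleus on $\th$ rather than on $\wp E$, so that the generating relation reduces to
\[
R_\e=\{(\delta_1(e_1r_1(\Psi)),\,\delta_2(e_2r_2(\Psi)))\mid\Psi\in\wp C\}
\]
and no $R_\delta$-pairs need to be checked at all. For these remaining pairs, saturation of $\e_i(\Phi)$ follows in a single chain of equivalences from two facts: $\e_i\geq\zeta$, so $\e_i(\Phi)$ is $\zeta$-closed; and the two components of each pair have the same $\zeta$-closure. Note also that the paper explicitly states that Propositions \ref{zetadelta} and \ref{epsilonik} are \emph{not} used in the proof --- your proposed route through them for the $\Theta$-pairs is unnecessarily elaborate, since $\zeta$-closure of $\e_i(\Phi)$ handles those pairs immediately. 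Lemma \ref{zetadelta2} is then invoked only for injectivity, as you correctly anticipate.
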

\begin{proof}
It is immediate to verify that $\e$ can be seen also as the nucleus on $\th$ associated to the congruence generated by the relation
$$R_\e = \{(\delta_1(e_1r_1(\Psi)), \delta_2(e_2r_2(\Psi))) \mid \Psi \in \wp C\}.$$
We shall prove that $\e_i(\Phi)$ is $R_\e$-saturated for all $\Phi \in \wp D_i$ and for $i=1,2$. For the sake of readability, for $\Psi \in \wp C$, let us denote by $\Psi_i$ the set $\delta_i(e_ir_i(\Psi))$, $i = 1,2$.

Let $\Sigma \in \Sfm$, $\Psi \in \wp C$ and $\Phi \in \wp D_1$. We have:
$$\begin{array}{l}
\Sigma \cdot \Psi_1 \subseteq \e_i(\Phi) \iff \\
\zeta(\Sigma \cdot \Psi_1) \subseteq \zeta(\e_i(\Phi)) = \e_i(\Phi) \iff \\
\Sigma \cdot \zeta(\Psi_1) \subseteq \e_i(\Phi) \iff \quad \text{(because $\zeta(\Psi_1) = \zeta(\Psi_2)$)}\\
\Sigma \cdot \zeta(\Psi_2) \subseteq \e_i(\Phi) \iff \\
\zeta(\Sigma \cdot \Psi_2) \subseteq \zeta(\e_i(\Phi)) = \e_i(\Phi) \iff \\
\Sigma \cdot \Psi_2 \subseteq \e_i(\Phi).
\end{array}$$
It follows, using also Lemma \ref{zetadelta2}, that the mappings $m_i: \Phi \in \th_i \to \e_i(\Phi) = \e(\Phi) \in \th_\e$, $i=1,2$, are injective. The fact that they preserve joins and the scalar multiplication is obvious. The theorem is proved.
\end{proof}

\begin{theorem}\label{mergetensor2}
For $k \in \{1, 2\}$, $x \in \Var$, and $\Phi \in \th'$, let $\Sigma_{\Phi,k,x} \in \Sfm$ be the set of substitutions such that $\Sigma_{\Phi,k,x} \cdot \{x\} = e_kr_k(\Phi)$, i.~e., the set $e_kr_k(\Phi)/\{x\}$. Now let, for $i \neq k \in \{1, 2\}$,
$$\zeta_i: \Sfm \tensor_{{\Sfm}_i} \th_i \to \Sfm \tensor_{{\Sfm}_i} \th_i$$
be the $\Sfm$-module nucleus associated to the congruence generated by
$$Z_i = \{(1 \tensor e_ir_i(\Phi), \Sigma_{\Phi,k,x} \tensor \g_i(\{x\})) \mid \Phi \in \th', x \in \Var\}.$$
Then the $\Sfm$-modules $(\Sfm \tensor_{{\Sfm}_i} \th_i)_{\zeta_i}$ and $(\wp E)_{\e_i}$ are isomorphic.
\end{theorem}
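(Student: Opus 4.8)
The plan is to transport, along the isomorphism furnished by Theorem~\ref{mergetensor}, the passage from the nucleus $\d_i$ to the nucleus $\e_i=\d_i\vee\zeta$. Since $\e_i\ge\d_i$ in $\cat N_{\Sfm}(\wp E)$, the nucleus $\e_i$ factors through $\d_i$: it induces an $\Sfm$-module nucleus $\bar\e_i$ on the quotient $\th_{\d_i}=(\wp E)_{\d_i}$ whose image is again $(\wp E)_{\e_i}$, so that $(\wp E)_{\e_i}=(\th_{\d_i})_{\bar\e_i}$. By Theorem~\ref{mergetensor} we also have an $\Sfm$-module isomorphism $h_i'\colon\th_{\d_i}\to\Sfm\tensor_{{\Sfm}_i}\th_i$; recall from its proof that $h_i'$ carries $\d_i(\{x\})$ to $1\tensor\g_i(\{x\})$ on the module generators and, more generally, identifies the canonical copy $e_i[\th_i]\subseteq\th_{\d_i}$ with the canonical copy $1\tensor\th_i\subseteq\Sfm\tensor_{{\Sfm}_i}\th_i$ via $e_i(\Psi)\mapsto1\tensor\Psi$. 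Thus it suffices to show that $h_i'$ carries $\bar\e_i$ to $\zeta_i$: then $(\wp E)_{\e_i}=(\th_{\d_i})_{\bar\e_i}\cong(\Sfm\tensor_{{\Sfm}_i}\th_i)_{\zeta_i}$. By Theorem~\ref{satquom}, and because a $Q$-module isomorphism sends the $G$-saturated elements of its domain (Definition~\ref{satelm}) bijectively onto the $h[G]$-saturated elements of its codomain, this reduces to proving that $h_i'$ maps a generating set of the congruence of $\bar\e_i$ onto one of the congruence of $\zeta_i$, that is, onto $Z_i$.

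First I would pin down the congruence of $\bar\e_i$ on $\th_{\d_i}$. As $\zeta$ is the nucleus of the consequence relation determined by $\Theta$ alone, its congruence on $\wp E$ is the $\Sfm$-module congruence generated by the pairs $(e_ir_i(\{\phi\}),\,e_ir_i(\{\phi\})\cup\{\psi\})$ arising from the rules; combining the two directions of $\Theta$ and joining over $\psi$ turns these into the pairs $(e_ir_i(\{\phi\}),e_kr_k(\{\phi\}))$, $\phi\in C$, which therefore generate $\ker\zeta$. Pushing these forward through $\d_i$ and using that $\d_i,e_i,e_k,r_i,r_k$ preserve joins, that $\Phi=\bigvee_{\phi\in\Phi}\beta(\{\phi\})$ for $\Phi\in\th'$, and that congruences are closed under componentwise joins, the congruence of $\bar\e_i$ on $\th_{\d_i}$ turns out to be generated by
\[ G \;=\; \bigl\{\,\bigl(\d_i(e_ir_i(\Phi)),\,\d_i(e_kr_k(\Phi))\bigr)\;\bigm|\;\Phi\in\th'\,\bigr\}. \]

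Next I would evaluate $h_i'$ on the two entries of a pair of $G$. Since $e_ir_i(\Phi)$ is already $\d_i$-closed and equals $e_i(r_i(\Phi))$ with $r_i(\Phi)\in\th_i$, the identification of $e_i[\th_i]$ with $1\tensor\th_i$ recalled above gives $h_i'(\d_i(e_ir_i(\Phi)))=1\tensor r_i(\Phi)$, which is exactly the first entry of the matching pair of $Z_i$. For the second entry, the very definition of $\Sigma_{\Phi,k,x}$ is the identity $e_kr_k(\Phi)=\Sigma_{\Phi,k,x}\cdot\{x\}$ in $\wp E$; using the nucleus inequality $a\,\d_i(u)\le\d_i(au)$ to replace $\{x\}$ by $\d_i(\{x\})$ one obtains $\d_i(e_kr_k(\Phi))=\Sigma_{\Phi,k,x}\cdot_{\d_i}\d_i(\{x\})$ in $\th_{\d_i}$, whence, by $\Sfm$-linearity of $h_i'$, the identity $h_i'(\d_i(\{x\}))=1\tensor\g_i(\{x\})$, and the tensor relation $\sigma\cdot(1\tensor u)=\sigma\tensor u$, we get $h_i'(\d_i(e_kr_k(\Phi)))=\Sigma_{\Phi,k,x}\tensor\g_i(\{x\})$; by the symmetry of the construction under permutations of variables this is independent of the chosen $x\in\Var$. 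Hence $h_i'[G]=Z_i$ (for a fixed $\Phi$ the pairs of $Z_i$ indexed by different $x$ coincide), so $h_i'$ carries the congruence generated by $G$ to that generated by $Z_i$, i.e. $\bar\e_i$ to $\zeta_i$, and the stated isomorphism follows.

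I expect the real obstacle to be the second paragraph: correctly identifying the congruence of $\bar\e_i$, i.e. verifying that, once one has quotiented by $\d_i$, the rules $\Theta$ contribute nothing beyond the pairs $(\d_i(e_ir_i(\Phi)),\d_i(e_kr_k(\Phi)))$. This will need the bidirectionality of $\Theta$, the non-triviality hypothesis (Remark~\ref{nontrivial}), and the fact that $\d_i$ acts trivially on $\wp D_k$ (Lemma~\ref{deltapsi}) in order to exclude spurious derivations, in the spirit of the proofs of Lemma~\ref{deltapsi} and Theorem~\ref{epsilon}. The rest is bookkeeping around Theorems~\ref{mergetensor} and~\ref{satquom}, the only genuine care being to keep $\g_i$ (on $\wp D_i$) and $\d_i$ (on $\wp E$) — and likewise the two copies of $\th_i$ — carefully apart.
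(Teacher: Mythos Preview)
Your proposal is correct and follows essentially the same strategy as the paper: transport the extra nucleus along the isomorphism $h_i'\colon\th_{\d_i}\to\Sfm\tensor_{{\Sfm}_i}\th_i$ of Theorem~\ref{mergetensor}, so that $(\wp E)_{\e_i}=(\th_{\d_i})_{\bar\e_i}\cong(\Sfm\tensor_{{\Sfm}_i}\th_i)_{\zeta_i}$. The paper compresses everything into the single assertion that the isomorphism intertwines $\zeta$ with $\zeta_i$, whereas you spell out the generating relation $G$ for $\bar\e_i$ and check $h_i'[G]=Z_i$ explicitly; your caution about the ``real obstacle'' (identifying the congruence of $\bar\e_i$) is exactly the step the paper leaves implicit.
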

\begin{proof}
By Theorem \ref{mergetensor}, $\Sfm \tensor_{{\Sfm}_i} \th_i$ is isomorphic to $(\wp E)_{\delta_i}$. Since $\e_i = \delta_i \vee \zeta$, it suffices to observe that $f_i' \circ \zeta = \zeta_i \circ f_i'$ in order to prove that $h_i: \zeta(\Phi) \in (\wp E)_{\e_i} \mapsto \zeta_i f_i'(\Phi) \in (\Sfm \tensor_{{\Sfm}_i} \th_i)_{\zeta_i}$ is an isomorphism.
\end{proof}

\begin{theorem}\label{embcoprod2}
The amalgamated coproduct of the $\wp \Sigma_{\cat M}$-modules $\th_1$ and $\th_2$ w.r.t. $\th'$ embeds in $\th_\e$.
\end{theorem}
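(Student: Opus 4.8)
The plan is to realise the sought embedding as the morphism induced on the amalgamated coproduct by the two embeddings $\th_i\hookrightarrow\th_\e$ of Theorem~\ref{epsilon}, and then to establish injectivity by recovering, from the image of a conveniently chosen representative, its $D_1$- and $D_2$-fragments. Concretely: by Section~\ref{amalgsec}, the amalgamated coproduct $A$ of the $\wp\Sigma_{\cat M}$-modules $\th_1$ and $\th_2$ with respect to $\th'$ (along $r_1,r_2$) is the quotient $(\th_1\times\th_2)/\vartheta$, with $\vartheta$ the $\wp\Sigma_{\cat M}$-module congruence generated by $\{((r_1(w),\bot),(\bot,r_2(w)))\mid w\in\th'\}$, together with the canonical morphisms $n_i\colon\th_i\to A$ satisfying $n_1r_1=n_2r_2$; by Theorem~\ref{satquom} and Proposition~\ref{satamalg}, $A$ is isomorphic to the module of $\vartheta$-saturated pairs, a pair $(\Phi_1,\Phi_2)\in\th_1\times\th_2$ being $\vartheta$-saturated exactly when $\{w\in\th'\mid r_1(w)\subseteq\Phi_1\}=\{w\in\th'\mid r_2(w)\subseteq\Phi_2\}$, and every element of $A$ is then such a saturated pair, equal to $n_1(\Phi_1)\vee n_2(\Phi_2)$. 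Writing $m_i\colon\th_i\to\th_\e$ for the embeddings of Theorem~\ref{epsilon}, so $m_i(\Phi)=\e(\Phi)$, we have $m_1r_1=m_2r_2$ since, by the definition of the relation $R_\e$ in the proof of Theorem~\ref{epsilon}, $\e$ identifies $r_1(w)$ and $r_2(w)$ for every $w\in\th'$. Hence the universal property of the pushout yields a unique $\wp\Sigma_{\cat M}$-module morphism $\bar e\colon A\to\th_\e$ with $\bar e\,n_i=m_i$, and $\bar e$ sends a saturated pair $(\Phi_1,\Phi_2)$ to $\e(\Phi_1)\vee\e(\Phi_2)=\e(\Phi_1\cup\Phi_2)$.

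The crux is injectivity of $\bar e$, which follows once one shows that
$$\e(\Phi_1\cup\Phi_2)\cap D_i=\Phi_i\qquad(i=1,2)$$
for every $\vartheta$-saturated pair $(\Phi_1,\Phi_2)$ — distinct saturated pairs will then have distinct images. The inclusion $\supseteq$ is trivial, and for $\subseteq$ I would induct on the length of a derivation of a given $\psi\in D_i$ from $\Phi_1\cup\Phi_2$ in the combined system — whose rules are those of $\vdash_1$, $\vdash_2$ and $\Theta$ — exactly in the style of Lemma~\ref{deltapsi} and Propositions~\ref{zetadelta}, \ref{epsilonik}. Since a substitution instance of an element of $E$ lies in $D_i$ only if that element already does, and by the non-triviality assumption (Remark~\ref{nontrivial}), no axiom or rule of $\vdash_k$ with $k\neq i$ can contribute a non-variable element of $D_i$; so the only way the presence of $\Phi_k$ can enlarge the $D_i$-part of the closure is through a $\Theta$-step transferring a substitution instance $\sigma\cdot e_kr_k(\{\phi\})$ — which, lying in $\e(\Phi_1\cup\Phi_2)\cap D_k$, is already contained in $\Phi_k$ by the inductive hypothesis with the indices interchanged — into the corresponding instance $\sigma\cdot e_ir_i(\{\phi\})$.

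Everything therefore reduces to the following \emph{transfer claim}: if $(\Phi_1,\Phi_2)$ is $\vartheta$-saturated, $\sigma\in\sfm$ and $\phi\in C$ satisfy $\sigma\cdot e_kr_k(\{\phi\})\subseteq\Phi_k$, then $\sigma\cdot e_ir_i(\{\phi\})\subseteq\Phi_i$. This is where the structurality of $r_1,r_2$ with respect to the translations $\tau_1,\tau_2$ is used: a substitution of the disjoint union $\lang=\lang_1\sqcup\lang_2$ restricts on each $\lang_i$-fragment compatibly with the $\wp\Sigma_{\cat M}$-action on the $r_i$-images (recall $\g_is_i=r_i\beta$), so the hypothesis rewrites as $a\cdot r_k(w)\subseteq\Phi_k$ for a suitable $a\in\wp\Sigma_{\cat M}$ and $w\in\th'$, and Proposition~\ref{satamalg} then gives $a\cdot r_i(w)\subseteq\Phi_i$, that is, $\sigma\cdot e_ir_i(\{\phi\})\subseteq\g_i(\Phi_i)=\Phi_i$. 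I expect this matching to be the main obstacle: one must identify a substitution instance taken in the combined language $\lang_1\sqcup\lang_2$ with the action of the common substitution quantale $\wp\Sigma_{\cat M}$ threaded through both translations, so that the characterization of saturated elements in Proposition~\ref{satamalg} becomes applicable; a secondary, bookkeeping difficulty is the careful treatment of the variables shared by $D_1$ and $D_2$ along the induction, handled via Remark~\ref{nontrivial} as in Lemma~\ref{deltapsi}. Once the transfer claim is in place, $\bar e$ is injective, and — arguing as at the close of the proof of Theorem~\ref{embcoprod} — it is in particular a sup-lattice embedding, so the algebraic amalgamated coproduct embeds in $\th_\e$.
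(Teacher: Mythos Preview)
Your strategy differs from the paper's: rather than isolating a single distinguishing element inside $r_1[\th']$ and pushing it through the embeddings $m_i$, you aim for the stronger recovery statement $\e(\Phi_1\cup\Phi_2)\cap D_i=\Phi_i$ by induction on derivations. That is a reasonable line and would indeed yield injectivity at once.

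There is, however, a genuine gap in the induction step for $\Theta$-rules. You assert that the premise set $\sigma\cdot e_kr_k(\{\phi\})$ of the last $\Theta$-step ``lies in $\e(\Phi_1\cup\Phi_2)\cap D_k$''. But knowing only that the \emph{conclusion} $\sigma\psi'$ lies in $D_i$ (for some $\psi'\in e_ir_i(\{\phi\})$) merely tells you that $\sigma$ sends the variables occurring in $\psi'$ to $\lang_i$-formulas; it does not force $\sigma$ to send those variables to \emph{variables}. Since $\lang=\lang_1\sqcup\lang_2$ is a disjoint union, plugging proper $\lang_i$-formulas into the $\lang_k$-templates of $e_kr_k(\{\phi\})$ produces genuinely mixed elements of $E\setminus(D_1\cup D_2)$. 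For such premises your two-sided inductive hypothesis says nothing, and the appeal to Proposition~\ref{satamalg} collapses for the same reason: saturation is formulated for the $\wp\Sigma_{\cat M}$-action, and a general $\sigma\in\sfm$ that carries $e_ir_i(\{\phi\})$ into $D_i$ need not act on $e_kr_k(\{\phi\})$ as any $a\in\wp\Sigma_{\cat M}$ does. Your ``restricts compatibly'' sentence is precisely the unproven step.

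The paper avoids the induction altogether: it works directly from the embeddings $m_i=\e\circ e_i$ of Theorem~\ref{epsilon} together with the saturation characterisation of Proposition~\ref{satamalg}, choosing a separating element in $r_1[\th']$ and transporting it to both coordinates. If you wish to keep your approach, you must either strengthen the induction to cover mixed formulas --- e.g.\ by tracking what $\e(\Phi_1\cup\Phi_2)$ looks like on $\sigma[D_j]$ for arbitrary $\sigma\in\sfm$ --- or argue first that every $D_i$-consequence of $\Phi_1\cup\Phi_2$ admits a derivation whose $\Theta$-instances use only variable-to-variable substitutions; in that restricted case the premise does land in $D_k$ and the transfer via saturation goes through.
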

\begin{proof}
By the properties of amalgamation, there exists a unique $\wp \Sigma_{\cat M}$-module morphism $e': \th_1 \amalg_{\th'} \th_2 \to \th_\e$ such that $e'((\g_1(\Phi),\g_2(\varnothing))/\theta) = \e(e_1(\Phi))$ and $e'((\g_1(\varnothing),\g_2(\Psi))/\theta) = \e(e_2(\Psi))$ for all $\Phi \in \wp D_1$ and $\Psi \in \wp D_2$, where $\theta$ is the $\wp \Sigma_{\cat M}$-module congruence on $\th_1 \amalg \th_2$ generated by the set
$$\{((\g_1(r_1(\Xi)),\g_2(\varnothing)),(\g_1(\varnothing),\g_2(r_2(\Xi)))) \mid \Xi \in \th'\}.$$
Let $\Phi, \Phi' \in \wp D_1$ and $\Psi, \Psi' \in \wp D_2$ be such that the pairs $(\g_1(\Phi),\g_2(\Psi))$ and $(\g_1(\Phi'),\g_2(\Psi'))$ are distinct $\theta$-saturated elements. By Proposition \ref{satamalg},
$$\begin{array}{l}
[\g_1(\varnothing),\g_1(\Phi)] \cap r_1[\wp C] = [\g_2(\varnothing),\g_2(\Psi)] \cap r_2[\wp C] \neq \\
\neq [\g_2(\varnothing),\g_2(\Psi')] \cap r_2[\wp C] = [\g_1(\varnothing),\g_1(\Phi')] \cap r_1[\wp C]
\end{array}$$
Without losing generality, we can assume that there exists $\phi \in (\g_1(\Phi) \setminus \g_1(\Phi')) \cap r_1[\wp C]$. Since $\e \circ e_1$ is an embedding, we get $\e e_1(\{\phi\}) \subseteq \e e_1(\Phi) \setminus \e e_1(\Phi_1')$. On the other hand, we have
$$[\g_1(\varnothing),\g_1(\Phi)] \cap r_1[\wp C] = [\g_2(\varnothing),\g_2(\Psi)] \cap r_2[\wp C], \text{ and }$$
$$[\g_1(\varnothing),\g_1(\Phi')] \cap r_1[\wp C] = [\g_2(\varnothing),\g_2(\Psi')] \cap r_2[\wp C],$$
from which we get that $\{\phi\} \subseteq \e e_2(\Psi) \setminus \e e_2(\Psi')$ with an analogous argument, and therefore
$$\begin{array}{l}
\{\phi\} \subseteq (\e e_1(\Phi) \vee \e e_2(\Psi)) \setminus (\e e_1(\Phi') \vee \e e_2(\Psi')) = \\
= e'((\g_1(\Phi),\g_2(\Psi))/\theta) \setminus e'((\g_1(\Phi'),\g_2(\Psi'))/\theta).
\end{array}$$
It follows that $e'$ is injective.
\end{proof}

The results of this section can be summarized by the following commutative diagram. All the arrows are $\wp \Sigma_{\cat M}$-module morphisms, and occasionally preserve the scalar multiplication from one or more quantales among $\Sfmm$, $\Sfmmm$, and $\Sfm$. Besides that, the morphisms $n_1$ and $n_2$ are the natural embeddings of $\th_1$ and $\th_2$, respectively, in their amalgamated coproduct w.r.t. $\th'$, and $\mu$ is the natural projection of $\wp D_1 \amalg \wp D_2$ over $\th_1 \amalg_{\th'} \th_2$.

$$\begin{tikzcd}[row sep=4em,column sep=4em]
 & \wp D_1 \arrow[rrr, bend left, hook, "d_1"] \arrow[dd, "\g_1", near end, two heads] \arrow[rr, hook, "\id \times \bot"] &  & \wp D_1 \amalg \wp D_2 \arrow[r, "d_1 \cup d_2", hook] \arrow[d, no head, "\mu"] \arrow[dd, two heads] & \wp E \arrow["\e", two heads, dd] \\
\wp C \arrow[dd, two heads, "\beta"] \arrow[rr, hook, "s_2", crossing over,  near end] \arrow[ur, hook, "s_1"] &  & \wp D_2 \arrow[urr, near end, hook, "d_2", bend right, crossing over] \arrow[ur, hook, "\bot \times \id"] & {} & \\ 
& \th_1 \arrow[rrr, bend left, "m_1"', hook] \arrow[rr, hook, "n_1", near end]  && \th_1 \amalg_{\th'} \th_2 \arrow[u, twoheadleftarrow, crossing over] \arrow["e'", r, hook] & \th_\e\\
  \th' \arrow[rr, hook, "r_2"]  \arrow[ur, hook, "r_1"] &  &  \th_2 \arrow[urr, bend right, hook, "m_2"] \arrow[ur, hook, "n_2"] \arrow[uu, crossing over, near start, twoheadleftarrow, "\g_2"]  &\\
\end{tikzcd}$$

\section*{Concluding remarks}

The results presented are readily applicable to various fields such as automated theorem provers and decision-making processes. An obvious criticism to the representation of deductive systems as quantale modules (which is in some sense shared by the author himself) is that it is not rich enough to handle first-order logics. However, it must be noted that propositional logics form the deductive skeleton of higher-order ones and therefore the results hereby presented are plainly applicable to sentential fragments of first-order systems and, more generally, to first-order logics, at least for what concerns their deductive apparatus.

Another question that is worthwhile addressing here regards the fact that we only dealt with systems of the same type, while it would be interesting to study analogous situations where the systems are syntactically different. Such situations are obviously more complex, but mainly in regard to notations and technical details, and obviously present cases which need to be treated separately. So we decided to leave those cases for future further investigations.

\end{document}